\newtheorem{theorem}{Theorem}[section]
\newtheorem{lemma}[theorem]{Lemma}
\newtheorem{proposition}[theorem]{Proposition}
\newtheorem{corollary}[theorem]{Corollary}
\theoremstyle{definition}
\newtheorem{definition}[theorem]{Definition}
\newtheorem{example}[theorem]{Example}
\newtheorem{question}[theorem]{Question}
\newtheorem{remark}[theorem]{Remark}
\newcommand{\NN}{{\mathbb{N}}}
\newcommand{\RR}{{\mathbb{R}}}
\newcommand{\ZZ}{{\mathbb{Z}}}
\newcommand{\lcm}{\mathrm{lcm}}
\newcommand{\ceil}[1]{\mleft\lceil{#1}\mright\rceil}
\newcommand{\floor}[1]{\mleft\lfloor{#1}\mright\rfloor}
\newcommand{\fractional}[1]{\mleft\{{#1}\mright\}}
\DeclareMathOperator{\age}{age}
\newcommand\cdotfill{%
    \leavevmode\cleaders\hb@xt@.44em{\hss$\cdot$\hss}\hfill\kern\z@
}
\title{Local \(h^*\)-polynomials for one-row Hermite normal form simplices}
\author[E.~Bajo]{Esme Bajo}
\address{University of California\\ Berkeley, CA, USA}
\email{esme@berkeley.edu}
\author[B.~Braun]{Benjamin Braun}
\address{Department of Mathematics\\
         University of Kentucky\\
         Lexington, KY 40506--0027}
\email{benjamin.braun@uky.edu}
\author[G.~Codenotti]{Giulia Codenotti}
\address{Freie Universit\"at Berlin\\ Arnimallee 3, 14105 Berlin, Germany}
\email{giulia.codenotti@fu-berlin.de}
\author[J.~Hofscheier]{Johannes Hofscheier}
\address{School of Mathematical Sciences\\University of Nottingham\\ Nottingham, NG7 2RD, UK}
\email{johannes.hofscheier@nottingham.ac.uk}
\author[A.~R.~Vindas-Mel\'endez]{Andr\'es R. Vindas-Mel\'endez}
\address{Department of Mathematics\\ Harvey Mudd College}
\email{avindasmelendez@g.hmc.edu}
\date{2 January 2025}
\begin{document}

\begin{abstract}
  The local \(h^*\)-polynomial of a lattice polytope is an important invariant arising in Ehrhart theory.
  Our focus is on lattice simplices presented in Hermite normal form with a single non-trivial row.
  We prove that when the off-diagonal entries are fixed, the distribution of coefficients for the local \(h^*\)-polynomial of these simplices has a limit as the normalized volume goes to infinity.
  Further, this limiting distribution is determined by the coefficients for a particular choice of normalized volume.
  We also provide an analysis of two specific families of such simplices to illustrate and motivate our main result.
\end{abstract}

\maketitle

\section{Introduction}

\subsection{Background}
Given a lattice polytope \(P\), i.e., a convex polytope with vertices in the integer lattice, there are two important invariants related to counting the number of integer points in integer dilates of \(P\).
First, the Ehrhart \(h^*\)-polynomial is a well-studied object with connections to commutative algebra, algebraic geometry, enumerative combinatorics, and other fields.
Second, recently there has been renewed interest in the study of the \emph{local $h^*$-polynomial} for \(P\), which is more complicated to define and has not been as extensively studied.
This latter invariant is the primary focus of this work.
We denote the local \(h^*\)-polynomial by \(B(P;z)\).

The local $h^*$-polynomial has arisen in multiple contexts using different notation.
For a detailed survey regarding local $h^*$-polynomials, see Section~2 of the recent paper by Borger, Kretschmer, and Nill~\cite{borgerkretschmernill}.
The local $h^*$-polynomial was defined in substantial generality by Stanley in~\cite{StanleySubdivisions}*{Example 7.13}, extending work first presented by Betke and McMullen~\cite{BetkeMcMullen}.
Local \(h^*\)-polynomials were also studied by Borisov and Mavlyutov in connection to Calabi-Yau complete intersections in Gorenstein toric Fano varieties, where they were referred to as $\Tilde{S}$-polynomials~\cite{BorisovMavlyutov}.
Local $h^*$-polynomials for simplices are sometimes referred to as \emph{box polynomials}; these were studied by Gelfand, Kapranov, and Zelevinsky~\cite{GKZ91}, who identified the importance of lattice simplices with vanishing local \(h^*\)-polynomials.
Lattice polytopes with vanishing local \(h^*\)-polynomials are called \emph{thin polytopes}, and these have recently been further investigated by Borger, Kretschmer, and Nill~\cite{borgerkretschmernill}.
Nill and Schepers ~\cites{nillschepers,borgerkretschmernill} observed that any lattice polytope admitting a regular unimodular triangulation has a unimodal local \(h^*\)-polynomial.

Unimodality of local \(h^*\)-polynomials also play a role in the study of unimodality for \(h^*\)-polynomials.
Schepers and Van Langenhoven~\cite{svl} introduced the concept of a \emph{box unimodal} triangulation, which is a lattice triangulation \(T\) of a lattice polytope for which every face of \(T\) has a unimodal local \(h^*\)-polynomial.
The motivation for the term ``box unimodal'' is the ``box polynomial'' nomenclature used by Gelfand, Kapronov, and Zelevinsky.
Schepers and Van Langenhoven proved that if a reflexive lattice polytope has a box unimodal triangulation, then it has a unimodal \(h^*\)-polynomial. This generated interest in determining which simplices have unimodal local \(h^*\)-polynomials, since these are the simplices appearing in box unimodal triangulations.
As one example of results in this direction, Solus and Gustafsson proved that every \(s\)-lecture hall order polytope admits a box unimodal triangulation~\cite{solusgustafsson}.

\subsection{Lattice simplices and our contributions}
Motivated by the above context, our goal is to investigate unimodality of local \(h^*\)-polynomials for lattice simplices. 
Rather than proving unimodality results for simplices which have a strong property, such as having a regular unimodular triangulation or the integer decomposition property, we focus on a family of simplices with arithmetic structure that is rich enough to capture a wide range of behaviors.
Within this family, we investigate the presence of, and global behavior for, local \(h^*\)-unimodality.
Thus, our goal is to develop a better idea of what ``typical'' behavior within a family of simplices means, and how to approach establishing unimodality results in this context using elementary methods.

The local \(h^*\)-polynomial of lattice simplices has the following beautiful geometric interpretation, which for the rest of this paper we take as the definition.
Let \(\{v_1,\ldots,v_{d+1}\}\) be the vertices of a lattice simplex \(S\), and let
\[
\Pi_S\coloneqq \left\{\sum_i \lambda_i(1,v_i):0< \lambda_i<1\right\}
\]
define the open parallelepiped for \(\{1\}\times S\).
Then the local \(h^*\)-polynomial is 
\[
B(S;z):=\sum_{(m_0,\dots, m_{d})\in \Pi_S}z^{m_0} \, ,
\]
i.e., \(B(S;z)\) encodes the distribution of lattice points through the open parallelepiped of \(\{1\}\times S\) with respect to the \(0\)-th coordinate.
We refer to the \(0\)-th coordinate as the \emph{height} of the point.
To emphasize this distributional perspective, and to allow us to compare the shape of coefficient vectors of different local \(h^*\)-polynomials, we study the coefficients of 
\[
B(S;z)/B(S;1),
\]
which encode the probability distribution for lattice points in \(\Pi_S\) with respect to height.

From this perspective, it seems natural that the local \(h^*\)-polynomial might have unimodal coefficients, as the parallelepiped is ``fatter'' geometrically in the middle than on the ends.
It seems plausible that unimodality might be expected in this setting, even without assumptions, such as admitting a regular unimodular triangulation or the integer decomposition property.
However, as we will see in this work, it is not clear whether or not these intuitions are correct; for example, Figure~\ref{fig:11sample505} and Figure~\ref{fig:fractionunimodalbypartition} suggest a variety of possible conjectures.

Because an arbitrary lattice simplex is arithmetically complicated, we restrict our attention to a set of simplices with a more manageable arithmetical structure.
Lattice simplices are classified through their Hermite normal form; see Theorem~\ref{thm:hermite} for the precise statement.
In Section~\ref{sec:background}, we define Hermite normal form simplices and recall how to compute their \(h^*\)- and local \(h^*\)-polynomials.
The family of simplices that we study are one-row Hermite normal form simplices, which are those unimodularly equivalent to the convex hull of the rows of an integer matrix as in~\eqref{eq:onerowintro}, specified by parameters $a_1, \dots, a_{d-1}, N$ with \(0\leq a_i< N\) for all \(i\). Note that the normalized volume of these simplices is exactly the parameter $N$.

\begin{equation}\label{eq:onerowintro}
H = 
\begin{bmatrix}
		 0 &  0 &  0 & \cdots &  0 &  0 &  0\\
		 1 &  0 &  0 & \cdots &  0 &  0 &  0\\
		 0 &  1 &  0 & \cdots &  0 &  0 &  0\\
		0 &  0 &  1 & \cdots &  0 &  0 &  0\\
		\vdots & \vdots & \vdots & \ddots & \vdots & \vdots & \vdots \\
		 0 &  0 &  0 & \cdots &  1 &  0 &  0\\
		 0 &  0 &  0 & \cdots &  0 &  1 &  0\\
		 a_1 & a_2 & a_3 & \cdots & a_{d-2} & a_{d-1} & N
	\end{bmatrix}.
\end{equation}

After providing detailed background regarding Hermite normal form, local \(h^*\)-polynomials, and the relationship between local \(h^*\)-polynomials and Stapledon \(a/b\)-decompositions for \(h^*\)-polynomials, our first contribution is to investigate two special subfamilies of one-row Hermite form simplices that exhibit distinct behavior with regard to their local \(h^*\)-polynomials. 
These special families illustrate both the variety of behavior observed with local \(h^*\)-polynomials and the proof techniques that we use throughout the paper.
This is the content of Section~\ref{sec:examples}, where we provide a complete investigation of local \(h^*\)-polynomials for ``all-ones'' simplices ($a_i =1$ for all $i$) and for ``geometric sequence'' simplices ($a_i=q^{d-i}$ for all $i$).
In the all-ones case, we find that their local \(h^*\)-polynomials are either constant or nearly so, as exemplified by Figure~\ref{fig:allonesdist}.
We characterize the all-ones simplices with unimodal local \(h^*\)-polynomials.
For the geometric sequence simplices, we find that their local \(h^*\)-polynomials have a pronounced unimodal behavior, as exemplified by Figure~\ref{fig:q3_k12_geometricdist}.
We prove that all the geometric sequence simplices have unimodal local \(h^*\)-polynomials, and further show that they do not have the integer decomposition property and thus do not fall within the scope of prior work~\cites{nillschepers,borgerkretschmernill}.

\begin{figure}[ht]
\centering
\includegraphics[width=0.6\textwidth]{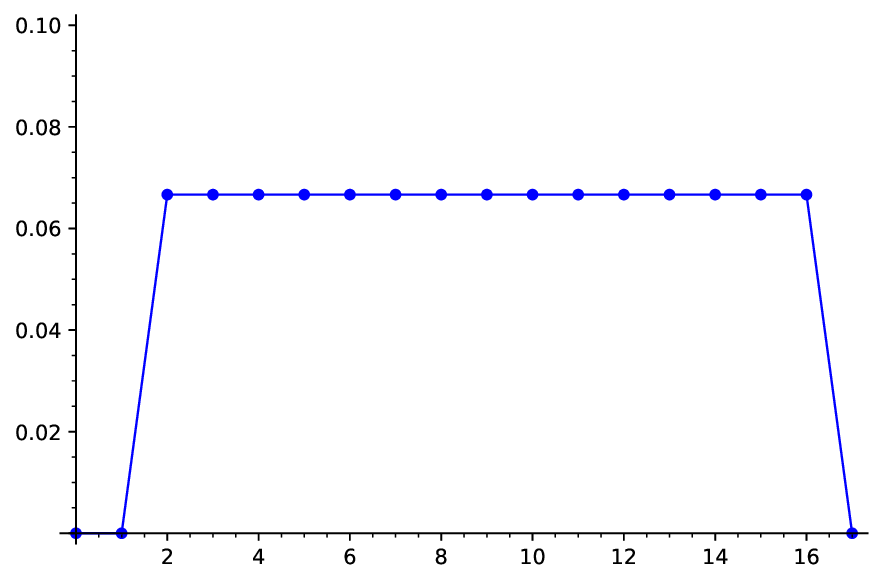}
\caption{The distribution of the coefficients of \(B(S;z)/B(S;1)\) for the one-row Hermite normal form simplex \(S\) with non-trivial row \( (1,1,\ldots,1,331) \) in dimension \(17\). Note that \(331=22\cdot 15+1\) and \(B(S;z)=22\cdot\sum_{i=2}^{16}z^i\).}
\label{fig:allonesdist}
\end{figure}

\begin{figure}[ht]
\centering
\includegraphics[width=0.6\textwidth]{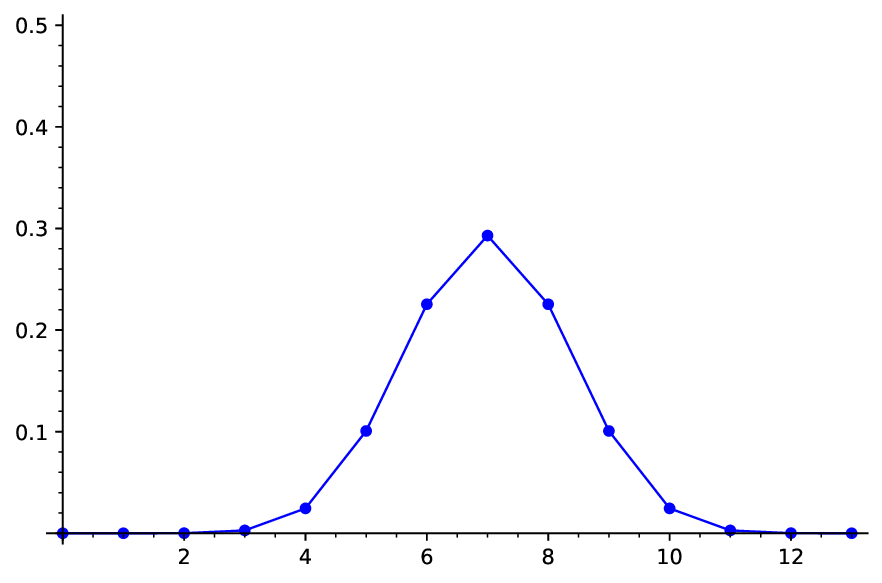}
\caption{The distribution of the coefficients of $B(S;z)/B(S;1)$\\ for the one-row Hermite normal form simplex $S$ with non-trivial \\row $(3^{11},3^{10},3^9,\ldots,3^2,3,1,3^{12})$ in dimension $13$.}
\label{fig:q3_k12_geometricdist}
\end{figure}

To study the set of all one-row Hermite normal form simplices, there are several ways to proceed.
One approach, which we do not take in this work, is to fix \(N\) and vary the values \(a_1,\ldots,a_{d-1}\).
Limited experimental data suggests that this process often results in simplices that do not have the integer decomposition property, but which do have unimodal local \(h^*\)-polynomials.
For example, in a random sample of \(100\) simplices of dimension \(11\) with \(N=505\), none of these simplices have the integer decomposition property, but all of them have unimodal local \(h^*\)-polynomials.
The distributions for these polynomials are plotted in Figure~\ref{fig:11sample505}, which demonstrates that the integer decomposition property does not fully explain local \(h^*\)-unimodality.

\begin{figure}
\centering
\includegraphics[width=0.7
\textwidth]{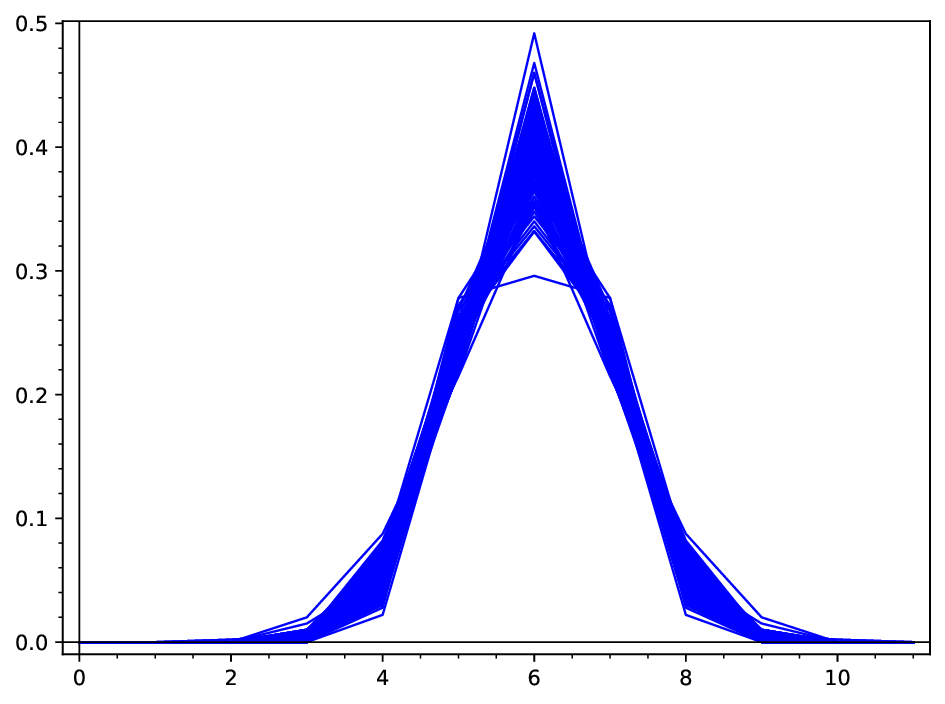}
\caption{The distributions for \(100\) local \(h^*\)-polynomials of \(11\)-dimensional one-row Hermite normal form simplices with \(N=505\).
None of these simplices have the integer decomposition property.}
\label{fig:11sample505}
\end{figure}

Another approach, which is the focus of our main result, is to consider one-row Hermite normal form simplices where the parameters \(a_i\) are fixed and then analyze the asymptotic behavior of \(B(S;z)/B(S;1)\) as the normalized volume \(N\to \infty\).
In Section~\ref{sec:limits}, we show that this asymptotic behavior is determined by the distribution of the coefficients for a particular normalized volume value deterimed by the values \(a_1,\ldots,a_{d-1}\).
This demonstrates that in the one-row case, the off-diagonal elements of a Hermite normal form matrix have a stronger influence on the local \(h^*\)-polynomial than the normalized volume.
Our main result is Theorem~\ref{thm:fullasymptotic}, which can be summarized as follows;
\begin{theorem}\label{thm:simple}
Fix \(a_1, \dots, a_{d-1} \in \ZZ_{\geq 1}\) and let \(M \coloneqq \mathrm{lcm}(a_1, \dots, a_{d-1}, -1+\sum_{i=1}^{d-1}a_i)\).
Let \(S_N\) denote the one row Hermite normal form simplex for these values of \(a_1, \dots, a_{d-1}\) and normalized volume \(N\).
As \(N\to\infty\), the distributions for the local \(h^*\)-polynomials of \(S_N\) converge to the distribution obtained when \(N=M+1\), i.e.,
\[
		\lim_{N \to \infty}B(S_{N};z)/B(S_{N};1) = B(S_{M+1};z)/B(S_{M+1};1) \, .
    \]
\end{theorem}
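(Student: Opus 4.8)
The plan is to rewrite $B(S;z)$ in terms of a single piecewise-linear ``height function'' on $[0,1)$, recognize the coefficients of $B(S;z)/B(S;1)$ as Riemann-type sums for the level sets of that function, and then observe that the sampling set $\{k/N : 1\le k\le N-1\}$ resolves the partition underlying those sums \emph{exactly} precisely when $N=M+1$. Concretely, writing $s\coloneqq -1+\sum_{i=1}^{d-1}a_i$ and
\[
g(t)\coloneqq t+\sum_{i=1}^{d-1}\fractional{-t a_i}\qquad(t\in[0,1)),
\]
the description of box points recalled in Section~\ref{sec:background} gives, after solving the integrality conditions for a point $\sum_i\lambda_i(1,v_i)\in\Pi_S$ coordinate by coordinate, that the lattice points of $\Pi_S$ are indexed by the $k\in\{1,\dots,N-1\}$ with $N\nmid k a_i$ for all $i$ and $g(k/N)\notin\ZZ$, and that the point indexed by $k$ has height $\ceil{g(k/N)}$. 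Hence $B(S;z)=\sum_{k\in K_N}z^{\ceil{g(k/N)}}$ and $B(S;1)=\lvert K_N\rvert$, where $K_N$ denotes that index set.

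The key step is the arithmetic of $g$. Between consecutive breakpoints — the fractions $m/a_i$ — each term $\fractional{-t a_i}$ has the form $c_i-t a_i$ with $c_i\in\ZZ$, so on every linear piece of $g$ one has $g(t)=C-st$ with $C\in\ZZ$. Therefore $\ceil{g(\cdot)}$ is constant off a finite set $\mathcal{T}$ of rationals, namely the breakpoints $m/a_i$ together with the integer-crossings $(C-\ell)/s$ of $g$; crucially, every element of $\mathcal{T}$ has denominator dividing $M=\lcm(a_1,\dots,a_{d-1},s)$, and $\mathcal{T}$ does not depend on $N$. A short check shows $K_N=\{k\in\{1,\dots,N-1\}:k/N\notin\mathcal{T}\}$: if $N\mid k a_i$ then $k/N$ is a breakpoint $m/a_i$, and if $g(k/N)\in\ZZ$ then $g(k/N)=\ell$ forces $k/N=(C-\ell)/s\in\mathcal{T}$; conversely any rational $k/N\in\mathcal{T}$ is excluded from $K_N$. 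Writing $[0,1)\setminus\mathcal{T}$ as a disjoint union of open intervals and letting $A_\ell$ be the union of those on which $\ceil{g}\equiv\ell$, we obtain $\sum_\ell\lvert A_\ell\rvert=1$ and $[z^\ell]B(S;z)=\#\{k\in\{1,\dots,N-1\}:k/N\in A_\ell\}$.

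The limiting behavior is then immediate. Each $A_\ell$ is a fixed finite union of intervals of total length $\lvert A_\ell\rvert$, so $[z^\ell]B(S;z)=\lvert A_\ell\rvert\,N+O(1)$ with implied constant independent of $N$; similarly $B(S;1)=\lvert K_N\rvert=N+O(1)$ since $\{1,\dots,N-1\}\setminus K_N$ has size at most $\lvert\mathcal{T}\rvert$. Hence the $\ell$-th coefficient of $B(S;z)/B(S;1)$ converges to $\lvert A_\ell\rvert$ as $N\to\infty$. It remains to identify this limit with the $N=M+1$ distribution. Since each $a_i$ and $s$ divides $M$, each is coprime to $M+1$, so $\mathcal{T}$ contains no point of the form $k/(M+1)$ with $1\le k\le M$; thus $K_{M+1}=\{1,\dots,M\}$ and $B(S;1)=M$. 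Finally, an elementary count shows that for integers $0\le a<b\le M$ exactly $b-a=M\cdot(b/M-a/M)$ of the points $k/(M+1)$, $k=1,\dots,M$, lie in the open interval $(a/M,\,b/M)$ (and none on its endpoints); summing over the intervals making up $A_\ell$ yields $[z^\ell]B(S;z)=M\lvert A_\ell\rvert$, i.e.\ the $N=M+1$ distribution is exactly $(\lvert A_\ell\rvert)_\ell$, completing the argument.

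The substance of the proof is the arithmetic in the second paragraph: that \emph{every} breakpoint of $g$ \emph{and every} point where $\ceil{g}$ jumps has denominator dividing $M$, which is precisely what makes $M+1$ the ``right'' value of $N$, together with the clean identification of the excluded indices with the rationals $k/N\in\mathcal{T}$. Everything after that is bookkeeping — estimating lattice points in a fixed union of intervals, and the exact count at $N=M+1$ — so I expect no genuine obstacle beyond carefully setting up $g$ and its critical set. (The degenerate case $s=0$, i.e.\ $d=2$ with $a_1=1$, should be excluded or handled separately, since then $g$ is constant on its pieces.)
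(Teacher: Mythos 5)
Your argument is correct and takes a genuinely different route from the paper's. You package the age of the box point indexed by \(k\) as \(\lceil g(k/N)\rceil\) for a single piecewise-linear function \(g(t)=t+\sum_i\{-ta_i\}\) on \([0,1)\), observe that both the breakpoints of \(g\) (the fractions \(m/a_i\)) and the integer-crossings of \(g\) (the points \((C-\ell)/s\) on each linear piece of slope \(-s\)) form a fixed finite set \(\mathcal{T}\) of rationals with denominators dividing \(M\), and then read off the coefficient of \(z^\ell\) in \(B(S;z)\) as the number of sample points \(k/N\) landing in a fixed finite union of intervals \(A_\ell\subseteq[0,1)\setminus\mathcal{T}\). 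Convergence of \(B(S;z)/B(S;1)\) to the vector of lengths \((|A_\ell|)_\ell\) is then just equidistribution of the \(k/N\), and the identification with \(N=M+1\) is an exact lattice-point count using \(\gcd(M,M+1)=1\). The paper instead works directly with the age formula \(1+\lceil\ell s/N\rceil-\sum_i\lfloor\ell a_i/N\rfloor\), writes \(\ell=kq+\delta+1\), and proves a sequence of floor/ceiling identities (Proposition~\ref{prop:floor-fct}, Corollary~\ref{cor:relation-betw-k-and-one}, Theorem~\ref{thm:ext-floor-ceil-identity}) that compare the ages for \(N=kM+r\), \(N=kM+1\), and \(N=M+1\) term by term, bounding the number of \(\ell\) where they can differ. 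Your formulation isolates the conceptual point — the critical set \(\mathcal T\) is \(N\)-independent and consists of \(M\)-th roots, so it is missed by every \(k/(M+1)\) — that the paper's lemmas extract piecemeal, and it makes the role of \(M+1\) transparent; the paper's more computational route has the bonus of establishing, as Theorem~\ref{thm:asymptotic}, the exact multiplication formula \(B(S_{kM+1};z)=k\,B(S_{M+1};z)\) (which, as noted, also drops out of your interval count at \(N=kM+1\)). Your flag of the degenerate case \(s=-1+\sum_ia_i=0\) (i.e.\ \(d=2\), \(a_1=1\)) is appropriate: there \(B(S;z)\equiv 0\) and the normalized distribution is undefined, so the statement implicitly excludes it.
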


The results in this paper are rather technical and thus the details of our definitions and arguments vary somewhat from the exposition provided so far.
Since the proofs in Section~\ref{sec:limits} rely on several technical lemmas regarding floor and ceiling functions, these lemmas are given in Section~\ref{sec:floor}.
We conclude our paper in Section~\ref{sec:conclusion} with several further questions motivated by this work.
All computations in this article were done using SageMath~\cite{sage}.

\section{Properties of Local \texorpdfstring{$h^*$}--Polynomials}\label{sec:background}

\subsection{Hermite normal form simplices}

For every lattice simplex $S$, there exists a unique matrix $H$ representing the vertices of $S$ up to unimodular equivalence \cite{schrijver}, called the \emph{Hermite normal form} of $S$. 

\begin{theorem}\label{thm:hermite}
Every $d$-dimensional lattice simplex in $\RR^d$ is unimodularly equivalent to a simplex $S$ arising as the convex hull of the rows of a $(d+1)\times d$ integer matrix $H$ of the following form:
\begin{itemize}
    \item $a_{0,i}=0$ for $i=1,\ldots,d$
    \item $a_{i,i}\in \ZZ_{\geq 1}$ for $i=1,\ldots,d$
    \item $0\leq a_{i,j}<a_{i,i}$ when $j<i$ for $i=1,\ldots,d$
    \item $a_{i,j}=0$ for $j>i$ for $i=1,\ldots,d$.
\end{itemize}
\end{theorem}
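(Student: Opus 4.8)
Theorem~\ref{thm:hermite} is the classical statement that every integer matrix admits a Hermite normal form, repackaged for lattice simplices, so the plan is to reduce it directly to that fact.

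First I would set up a matrix. Let $P\subseteq\RR^d$ be an arbitrary $d$-dimensional lattice simplex with vertices $p_0,p_1,\dots,p_d\in\ZZ^d$. Recall that a unimodular equivalence is a map $x\mapsto Ux+t$ with $U\in\mathrm{GL}_d(\ZZ)$ and $t\in\ZZ^d$; such maps carry lattice simplices to lattice simplices, so we are free to apply them to $P$. Translating by $-p_0$, we may assume $p_0=0$; because $P$ is full-dimensional, the remaining vertices $p_1,\dots,p_d$ are linearly independent, so the integer matrix $A$ whose $i$-th row is $p_i$ is a nonsingular $d\times d$ matrix.

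Next I would invoke the Hermite normal form. By its classical existence (see \cite{schrijver}), there is $U\in\mathrm{GL}_d(\ZZ)$, a product of elementary integer column operations (column swaps, sign changes, and adding an integer multiple of one column to another), such that $H'\coloneqq AU$ is lower triangular with strictly positive diagonal entries and with $0\le H'_{i,j}<H'_{i,i}$ for all $j<i$. The map $x\mapsto xU$ on row vectors is itself a unimodular transformation; it fixes $p_0=0$ and sends the vertex $p_i$ (the $i$-th row of $A$) to the $i$-th row of $H'=AU$. Hence $P$ is unimodularly equivalent to the simplex $S$ whose vertices are $0$ together with the rows of $H'$. Letting $H$ be the $(d+1)\times d$ matrix with zero $0$-th row and rows $1,\dots,d$ equal to those of $H'$, the four bulleted conditions hold by construction — $a_{0,i}=0$, $a_{i,i}=H'_{i,i}\in\ZZ_{\ge1}$, $0\le a_{i,j}<a_{i,i}$ for $j<i$, and $a_{i,j}=0$ for $j>i$ — so $S=\conv(\text{rows of }H)$ has the required form.

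There is no genuine obstacle here, since the entire mathematical content is the classical Hermite normal form theorem; the work is purely in the translation. The two points needing attention are that one must designate a single vertex (here $p_0$, placed at the origin) so that the remaining $d$ vertices form a genuine square matrix to which Hermite normal form applies, and one must keep straight that elementary column operations on that vertex matrix correspond exactly to unimodular changes of coordinates on the ambient lattice $\ZZ^d$. The uniqueness of $H$ mentioned in the surrounding discussion is not needed for the existence statement above, but follows from the uniqueness half of the Hermite normal form theorem.
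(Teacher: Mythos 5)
The paper does not prove Theorem~\ref{thm:hermite}; it states it as a known classification and cites \cite{schrijver}, so there is no in-paper proof to compare against. Your proposal is the correct and standard way to unfold that citation: fix a vertex at the origin by a lattice translation, observe that the remaining $d$ vertices form the rows of a nonsingular integer $d\times d$ matrix $A$, apply the classical existence of (column-style) Hermite normal form to get $U\in\mathrm{GL}_d(\ZZ)$ with $AU$ lower triangular, positive on the diagonal, and row-reduced in the sense $0\le (AU)_{i,j}<(AU)_{i,i}$ for $j<i$, and note that right-multiplication by $U$ is exactly a unimodular change of coordinates on the lattice. Prepending the zero row recovers the matrix $H$ in the statement. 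Two small points worth keeping explicit, both of which you flag: the result depends on choosing one vertex as the origin and an ordering of the rest, which is fine because the theorem only asserts existence of \emph{some} such $H$; and the reduction convention used here is the row-wise one ($0\le a_{i,j}<a_{i,i}$ for $j<i$) rather than the column-wise one ($0\le a_{i,j}<a_{j,j}$ for $i>j$), which is precisely Schrijver's convention and is achieved by adding integer multiples of later columns (which vanish in earlier rows) to earlier columns, so it does not disturb previously reduced rows or the diagonal. There is no gap.
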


We are using the convention that the \emph{rows of $H$ denote the vertices of $S$}, whereas some authors use column form.
We denote by $A$ the matrix $H$ with an initial column of ones appended.
This is equivalent to lifting the configuration of rows of $H$ to height one in a space one dimension higher.

\begin{example}\label{ex:hnfgeneral}
An example of a Hermite normal form $H$ of a simplex and the extended matrix $A$: 
\[
H = 
\left[
\begin{array}{ccccc}
0 & 0 & 0 & 0 & 0 \\
3 & 0 & 0 & 0 & 0 \\
2 & 5 & 0 & 0 & 0 \\
1 & 0 & 2 & 0 & 0 \\
0 & 0 & 0 & 1 & 0 \\
8 & 8 & 2 & 6 & 9
\end{array}\right],  \,\,\, A = \left[
\begin{array}{cccccc}
1 & 0 & 0 & 0 & 0 & 0 \\
1 & 3 & 0 & 0 & 0 & 0 \\
1 & 2 & 5 & 0 & 0 & 0 \\
1 & 1 & 0 & 2 & 0 & 0 \\
1 & 0 & 0 & 0 & 1 & 0 \\
1 & 8 & 8 & 2 & 6 & 9
\end{array}\right]\,.
\]
\end{example}

We focus on the following special class of Hermite normal form matrices, which have been extensively studied \cites{braundavisanti, braundavisintclosed,braundavissolus,braunhanely, braunliu,conrads,hibihermite,hibitsuchiyayoshida,higashitanicounterexamples,higashitaniprime,liusolus,payne,soluslocal,solusnumeral,tsuchiyagorenstein}.

\begin{definition} 
We say that a simplex $S$ is of \emph{one-row Hermite normal form} if, given its Hermite normal form $H$ in the notation above, we have $a_{i,i}=1$ for all $i=1,\ldots,d-1$.
We often refer to such a simplex by the values in the $(d+1)$-st row,
\[
	(a_1,a_2,\ldots,a_{d-1},N),
\]
where we write $a_i$ for $a_{d,i}$ and $N$ for $a_{d,d}$, as shown in~\eqref{eq:onerow}:
\end{definition}

\begin{equation}\label{eq:onerow}
A = 
\begin{bmatrix}
		1 &  0 &  0 &  0 & \cdots &  0 &  0 &  0\\
		1 &  1 &  0 &  0 & \cdots &  0 &  0 &  0\\
		1 &  0 &  1 &  0 & \cdots &  0 &  0 &  0\\
		1 &  0 &  0 &  1 & \cdots &  0 &  0 &  0\\
		\vdots & \vdots & \vdots & \vdots & \ddots & \vdots & \vdots & \vdots \\
		1 &  0 &  0 &  0 & \cdots &  1 &  0 &  0\\
		1 &  0 &  0 &  0 & \cdots &  0 &  1 &  0\\
		1 & a_1 & a_2 & a_3 & \cdots & a_{d-2} & a_{d-1} & N
	\end{bmatrix}.
\end{equation}

\begin{example}\label{ex:hnf6}
An example of a simplex $S$ in one-row Hermite normal form is given by
\[
H = 
\left[
\begin{array}{ccccc}
0 & 0 & 0 & 0 & 0 \\
1 & 0 & 0 & 0 & 0 \\
0 & 1 & 0 & 0 & 0 \\
0 & 0 & 1 & 0 & 0 \\
0 & 0 & 0 & 1 & 0 \\
1 & 1 & 1 & 1 & 6
\end{array}\right]\,.
\]
\end{example}

It is a short exercise to determine the volume of $S$ from the matrix $A$.

\begin{proposition}
The normalized volume of a simplex given in Hermite normal form is $\prod_{i=1}^da_{i,i}$, for $a_{i,i}\in A$.
Thus, for a one-row Hermite normal form simplex, the normalized volume is $N$.
\end{proposition}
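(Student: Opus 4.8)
The plan is to express the normalized volume as a determinant and then read it off from the triangular shape of the matrix. Recall that the normalized volume of a $d$-dimensional lattice simplex with vertices $v_0, v_1, \dots, v_d$ equals $\bigl|\det(v_1 - v_0, \dots, v_d - v_0)\bigr|$, i.e.\ the index of the sublattice generated by the edge vectors at $v_0$; equivalently it equals $|\det A|$ for the height-one lift $A$, since expanding $\det A$ after subtracting its first row from every other row reproduces this edge-vector determinant up to sign.

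The first step is to use that in Hermite normal form the initial vertex is the origin ($a_{0,i}=0$ for all $i$), so the edge vectors from $v_0$ are simply $v_1, \dots, v_d$, the rows of $H$ with its zero top row removed. The second step is to invoke the triangularity condition $a_{i,j}=0$ for $j>i$: this $d\times d$ matrix is lower triangular, hence its determinant is $\prod_{i=1}^{d} a_{i,i}$, which is already positive because each $a_{i,i}\geq 1$. This establishes the first claim. For the second claim, a one-row Hermite normal form simplex has $a_{i,i}=1$ for $i=1,\dots,d-1$ and $a_{d,d}=N$, so the product reduces to $N$.

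I do not expect a genuine obstacle here --- the mathematical content is a one-line determinant evaluation. The only points requiring care are the conventions already fixed in the paper: that rows (not columns) of $H$ encode the vertices, that passing to $A$ lifts the vertex configuration to height one without affecting the determinant up to sign, and that ``normalized volume'' means $d!$ times Euclidean volume, so that it coincides exactly with the lattice index rather than carrying a spurious factorial factor. With those conventions in place, the proposition follows directly from the structure of $H$ guaranteed by Theorem~\ref{thm:hermite}.
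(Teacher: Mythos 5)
Your proof is correct; the paper states this proposition without supplying a proof, treating it as immediate. Your argument---the normalized volume equals $|\det A|$ for the height-one lift, and $A$ (or equivalently the edge-vector matrix from the origin vertex) is lower triangular with diagonal $1, a_{1,1}, \dots, a_{d,d}$---is exactly the standard justification one would write out, so there is no discrepancy to report.
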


\subsection{\texorpdfstring{$h^*$}-- and local \texorpdfstring{$h^*$}--polynomials}\label{sec:box-h-star-polynomials}

In this section, we define the local $h^*$-polynomial, not in terms of height, but instead using an equivalent definition involving the \emph{age} of a point.
We use this alternative definition because it supports cleaner technical arguments of our main results.

The arithmetic structure of the lattice points in the cone over $S$ is captured by the lattice generated by $A^{-1}$, which for a one-row Hermite normal form simplex with non-trivial row given by $(a_1,a_2,\ldots,a_{d-1},N)$ is

\begin{equation}\label{eq:onerowinverse}
A^{-1} = 
\begin{bmatrix}
		\hphantom{-}1 & \hphantom{-}0 & \hphantom{-}0 & \hphantom{-}0 & \cdots & \hphantom{-}0 & \hphantom{-}0 & \hphantom{-}0\\
		-1 & \hphantom{-}1 & \hphantom{-}0 & \hphantom{-}0 & \cdots & \hphantom{-}0 & \hphantom{-}0 & \hphantom{-}0\\
		-1 & \hphantom{-}0 & \hphantom{-}1 & \hphantom{-}0 & \cdots & \hphantom{-}0 & \hphantom{-}0 & \hphantom{-}0\\
		-1 & \hphantom{-}0 & \hphantom{-}0 & \hphantom{-}1 & \cdots & \hphantom{-}0 & \hphantom{-}0 & \hphantom{-}0\\
		\vdots & \vdots & \vdots & \vdots & \ddots & \vdots & \vdots & \vdots \\		-1 & \hphantom{-}0 & \hphantom{-}0 & \hphantom{-}0 & \cdots & \hphantom{-}1 & \hphantom{-}0 & \hphantom{-}0\\
		-1 & \hphantom{-}0 & \hphantom{-}0 & \hphantom{-}0 & \cdots & \hphantom{-}0 & \hphantom{-}1 & \hphantom{-}0\\
		\frac{-1+\sum_{i=1}^{d-1}a_i}N & -\frac{a_1}N & -\frac{a_2}N & -\frac{a_3}N & \cdots & -\frac{a_{d-2}}N & -\frac{a_{d-1}}N & \frac1N
	\end{bmatrix}.
\end{equation}

\begin{definition}
We define the \emph{lattice for $S$} as \[
\Lambda=\Lambda(S)\coloneqq \ZZ^{d+1}\cdot A^{-1}
\]
and the \emph{parallelepiped group for $S$} as 
\[
\Gamma=\Gamma(S):=\Lambda(S)/\ZZ^{d+1} \, .
\]
Define the \emph{age} of an element $x=(x_0, \dots, x_d)\in \Lambda(S)$ as 
\[
\age(x) \coloneqq \sum_{i=0}^d \fractional{x_i} \, .
\]
Here \(\{x\}\) denotes the \emph{fractional part} of a real number \(x \in \mathbb{R}\), i.e., \(\{x\} = x - \lfloor x \rfloor\) where \(\lfloor x\rfloor\) denotes the largest integer less than or equal to \(x\).
\end{definition}

Note that when \(S\) is a one-row Hermite normal form simplex, the elements of \(\Gamma\) are parameterized for \(0\leq \ell\leq N-1\) by
\[
    \left(\frac{\ell(-1+\sum_{i=1}^{d-1}a_i)}{N} , -\frac{\ell a_1}{N} , -\frac{\ell a_2}{N} , -\frac{\ell a_3}{N} , \ldots ,-\frac{\ell a_{d-2}}{N} ,-\frac{\ell a_{d-1}}{N} , \frac{\ell}{N}\right) \, .
    \]
    We frequently use this parameterization of \(\Gamma\), in particular to compute the following two polynomials associated to \(S\).

\begin{definition}
Define the \emph{$h^*$-polynomial} of $S$ as
    \[
		h^*(S;z)=\sum_{i=0}^d {h_i}^* z^i\coloneqq\sum_{x\in \Gamma}z^{\age(x)}
    \]
and the \emph{local \(h^*\)-polynomial} for $S$ as
    \[
		B(S;z)=\sum_{i=1}^d b_i z^i \coloneqq \sum_{x \, \in \, \Gamma \, \cap \, {(0,1)}^{d+1}} z^{\age(x)}.
    \]
The coefficients of $h^*(S;z)$ and $B(S;z)$ form vectors called the \emph{$h^*$-vector} and \emph{local \(h^*\)-vector} of $S$, respectively.
Local \(h^*\)-polynomials of simplices are also known as \emph{box polynomials}.
\end{definition} 

\begin{example}\label{ex:hnf6box}
The simplex in Example~\ref{ex:hnf6} has $B(S;z)=z^2+z^3+z^4$ and $h^*(S;z)=1+2z^2+2z^3+z^4$.
\end{example}

It is straightforward to verify the following proposition.

\begin{proposition}\label{prop:boxpalindromic}
    For any lattice simplex \(S\), the polynomial \(B(S;z)/z\) has palindromic, i.e., symmetric, coefficients.
\end{proposition}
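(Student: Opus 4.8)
The plan is to exhibit an explicit age-reversing involution on the finite set indexing the monomials of $B(S;z)$ and to read the desired symmetry off from it. Recall that $B(S;z)=\sum_{x\in\Gamma\cap(0,1)^{d+1}}z^{\age(x)}$, where I regard each element of $\Gamma=\Lambda(S)/\ZZ^{d+1}$ through its unique representative in $[0,1)^{d+1}$; with this convention, the cosets contributing to $B(S;z)$ are exactly those whose representative has every coordinate strictly positive. Since $\Gamma$ is a finite abelian group, negation $x\mapsto -x$ is a well-defined involution of $\Gamma$, and the key step is to check that it restricts to $\Gamma\cap(0,1)^{d+1}$ and negates the age up to the additive constant $d+1$.

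Concretely, if $x\in\Gamma$ is the coset of $(x_0,\dots,x_d)$ with every $x_i\in(0,1)$, then $-x$ is the coset of $(-x_0,\dots,-x_d)$, whose representative in $[0,1)^{d+1}$ is $(1-x_0,\dots,1-x_d)$, again with every coordinate in $(0,1)$. Hence negation maps $\Gamma\cap(0,1)^{d+1}$ to itself, and on this set $\{x_i\}=x_i$ and $\{1-x_i\}=1-x_i$, so that
\[
\age(-x)=\sum_{i=0}^{d}(1-x_i)=(d+1)-\sum_{i=0}^{d}x_i=(d+1)-\age(x).
\]
Thus $x\mapsto -x$ is an involution of $\Gamma\cap(0,1)^{d+1}$ carrying $\{x:\age(x)=j\}$ bijectively onto $\{x:\age(x)=(d+1)-j\}$.

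Since this bijection matches the fiber over $j$ with the fiber over $(d+1)-j$, we obtain $b_j=b_{(d+1)-j}$ for all $j$, equivalently $B(S;z)=z^{d+1}B(S;1/z)$. The exponents appearing in $B(S;z)$ lie in $\{1,\dots,d\}$ (as recorded in the definition of $B(S;z)$; this also follows from integrality of the age on $\Lambda(S)$ together with $0<\sum_i x_i<d+1$ on the open box), so dividing by $z$ yields a polynomial of degree at most $d-1$ whose coefficient of $z^{j}$ equals $b_{j+1}=b_{d-j}$, which is the coefficient of $z^{(d-1)-j}$; that is, $B(S;z)/z$ is palindromic. I do not expect a genuine obstacle here: the only points that need care are the well-definedness of negation on the quotient and the verification that it sends box representatives with positive coordinates to representatives of the same type, both of which are routine once the system of representatives is pinned down.
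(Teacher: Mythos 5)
Your proof is correct. The paper itself omits the proof, remarking only that the proposition is ``straightforward to verify,'' and the negation involution \(x \mapsto -x\) on \(\Gamma \cap (0,1)^{d+1}\), together with the identity \(\age(-x) = (d+1) - \age(x)\), is exactly the routine argument being alluded to.
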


The $h^*$-polynomial is a main object of study in Ehrhart theory for lattice polytopes.
Local \(h^*\)-polynomials for simplices are a special case of local $h^*$-polynomials for arbitrary lattice polytopes.
For a detailed discussion of the history of local $h^*$-polynomials in this general setting, see~\cite{borgerkretschmernill}*{Section 2.4}. 
It is common that in expositions of Ehrhart theory~\cite{ccd}, these polynomials are defined using the fundamental parallelepiped of the cone over $S$, defined as follows; 
let \(\{r_0,\ldots,r_d\}\) denote the rows of the extended matrix \(A\) for \(S\). 
Then the fundamental parallelepiped is
\begin{equation}\label{eqn:fppcoeffset}
\Pi_S\coloneqq \left\{\sum_i \lambda_ir_i:0\leq \lambda_i<1\right\} \, .
\end{equation}
Multiplication by \(A\) produces a bijection between the lattice \(\Lambda(S)\) and \(\ZZ^{d+1}\), and further this takes the half-open cube \([0,1)^{d+1}\) to \(\Pi_S\).
Thus, there is a bijection between the \(\Lambda(S)\)-points in \([0,1)^{d+1}\) and the \(\ZZ^{d+1}\)-points in \(\Pi_S\).
Further, the grading of \(\ZZ^{d+1}\cap \Pi_S\) corresponding to the initial coordinate corresponds to the grading of \(\Lambda(S)\cap [0,1)^{d+1}\) by the age function.
Thus, the definitions of local-\(h^*\)- and $h^*$-polynomials given above agree with the usual definitions given in terms of the fundamental parallelepiped.

In the case of a one-row Hermite normal form simplex, certain number-theoretic conditions imply that the local \(h^*\)-polynomial and \(h^*\)-polynomial are essentially the same, as follows;

\begin{theorem}\label{thm:boxhstarconditions}
    Let \(M=\lcm(a_1,a_2,\ldots,a_{d-1},-1+\sum_ia_i)\) and let \(S\) be the simplex with non-trivial row \((a_1,\ldots,a_{d-1},N)\).
    If \(\gcd(M,N)=1\), then 
    \[
    h^*(S;z)=1+B(S;z) \, .
    \]
\end{theorem}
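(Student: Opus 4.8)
The plan is to show that under the hypothesis $\gcd(M,N)=1$, every nonzero element $x \in \Gamma$ actually lies in the open cube $(0,1)^{d+1}$, so that the only element contributing to $h^*(S;z)$ but not to $B(S;z)$ is the origin, which contributes the constant term $1$. Using the explicit parameterization of $\Gamma$ recalled in the excerpt, a nonzero element corresponds to an index $\ell$ with $1 \le \ell \le N-1$, and its coordinates are the fractional parts of
\[
\left(\frac{\ell\,(-1+\sum_{i}a_i)}{N},\ -\frac{\ell a_1}{N},\ \ldots,\ -\frac{\ell a_{d-1}}{N},\ \frac{\ell}{N}\right).
\]
The element fails to lie in $(0,1)^{d+1}$ precisely when one of these coordinates is an integer, i.e.\ when $N \mid \ell a_i$ for some $i$, or $N \mid \ell\,(-1+\sum_i a_i)$, or $N \mid \ell$. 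The last cannot happen for $1 \le \ell \le N-1$. So I would argue that $\gcd(M,N)=1$ forces $N \mid \ell a_i$ to imply $N \mid \ell$, and similarly for the $(-1+\sum_i a_i)$ term, hence no such bad $\ell$ exists.

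First I would record the elementary number-theoretic fact: if $N \mid \ell c$ and $\gcd(N,c)=g$, then $\tfrac{N}{g} \mid \ell$. Since each $a_i$ divides $M$ and $c := -1+\sum_i a_i$ divides $M$, the hypothesis $\gcd(M,N)=1$ gives $\gcd(N,a_i)=1$ for all $i$ and $\gcd(N,c)=1$. Therefore $N \mid \ell a_i$ forces $N \mid \ell$, and $N \mid \ell c$ forces $N \mid \ell$; neither is possible for $1 \le \ell \le N-1$. This shows every nonzero $x \in \Gamma$ has all coordinates with nonzero fractional part, i.e.\ $x \in \Gamma \cap (0,1)^{d+1}$.

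It then remains to match the gradings. By definition $h^*(S;z) = \sum_{x \in \Gamma} z^{\age(x)}$ and $B(S;z) = \sum_{x \in \Gamma \cap (0,1)^{d+1}} z^{\age(x)}$. We have just shown $\Gamma \cap (0,1)^{d+1} = \Gamma \setminus \{0\}$, and $\age(0) = 0$, contributing $z^0 = 1$. Hence $h^*(S;z) = 1 + \sum_{x \in \Gamma \setminus \{0\}} z^{\age(x)} = 1 + B(S;z)$, as claimed. The only mild subtlety — and the step I would be most careful about — is confirming that distinct values of $\ell$ in $\{0,\dots,N-1\}$ give genuinely distinct elements of $\Gamma$ (so that the sum over $\Gamma$ really is indexed by $\ell$), which follows from the last coordinate being $\{\ell/N\}$; this is already implicit in the excerpt's statement that $\Gamma$ is parameterized by $0 \le \ell \le N-1$, so no real obstacle arises. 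The crux is simply the coprimality bookkeeping in the previous paragraph.
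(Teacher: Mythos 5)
Your proof is correct and follows essentially the same route as the paper: you show that \(\gcd(M,N)=1\) forces every nonzero element of \(\Gamma\), parameterized by \(\ell=1,\dots,N-1\), to have all non-integer coordinates and hence to lie in the open cube \((0,1)^{d+1}\), so that \(h^*(S;z)\) and \(B(S;z)\) differ only by the contribution of the origin. The only difference is presentational: where the paper simply asserts that the ``all coordinates non-integer for \(\ell=1,\dots,N-1\)'' condition is equivalent to the gcd condition, you spell out the (correct) number-theoretic step that \(a_i\mid M\) and \((-1+\sum_i a_i)\mid M\) together with \(\gcd(M,N)=1\) give \(\gcd(a_i,N)=\gcd(-1+\sum_i a_i,N)=1\), so \(N\mid \ell a_i\) would force \(N\mid \ell\), which is impossible in the stated range.
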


\begin{proof}
For a one-row Hermite normal form simplex, the parallelepiped group is generated by
\[
    \left(\frac{(-1+\sum_{i=1}^{d-1}a_i)}{N} , -\frac{a_1}{N} , -\frac{a_2}{N} , -\frac{a_3}{N} , \ldots ,-\frac{a_{d-2}}{N} ,-\frac{a_{d-1}}{N} , \frac{1}{N}\right)\, .
    \]
The condition \(h^*(S;z)=1+B(S;z)\) occurs if and only if every non-zero point in the parallelepiped group has all non-zero coordinates, since this is the criteria for all non-zero points to be in the open box \((0,1)^{d+1}\).
This holds if and only if
    \[
    \left(\frac{\ell(-1+\sum_{i=1}^{d-1}a_i)}{N} , -\frac{\ell a_1}{N} , -\frac{\ell a_2}{N} , -\frac{\ell a_3}{N} , \ldots ,-\frac{\ell a_{d-2}}{N} ,-\frac{\ell a_{d-1}}{N} , \frac{\ell}{N}\right)
    \]
    has all non-integer coordinates for every \(\ell=1,2,\ldots,N-1\).
    This is equivalent to the gcd condition in the theorem.
\end{proof}

\begin{remark}
    Note that the numerical conditions in Theorem~\ref{thm:boxhstarconditions} are identical to those identified by Hibi, Higashitani, and Li~\cite{hibihermite} as corresponding to ``shifted symmetric'' \(h^*\)-vectors.
    The symmetry that they observed is precisely the symmetry of the local \(h^*\)-polynomial from Proposition~\ref{prop:boxpalindromic}.
\end{remark}

For each polynomial \(f(z)\) with non-negative coefficients, we can associate to \(f\) a discrete probability distribution.

\begin{definition}
Given \(f(z)\in \RR_{\geq 0}[z]\), we define the \emph{distribution associated to \(f\)} to be the distribution defined by the coefficients of \(f(z)/f(1)\).
\end{definition}

We are interested in these distributions in the case of the \(h^*\)- and local \(h^*\)-polynomials for \(S\).

\begin{example}\label{ex:hnf6boxdist}
    The local \(h^*\)-and \(h^*\)-polynomials in Example~\ref{ex:hnf6box} yield the distributions \[(0,0,1/3,1/3,1/3)\] and \[(1/6,0,1/3,1/3,1/6)\, ,\] respectively.
\end{example}

\subsection{Relationship to Stapledon decompositions}
    
Stapledon established~\cite{stapledonineq} a decomposition of the $h^\ast$-polynomial of a lattice polytope into its boundary $h^\ast$-polynomial and its ``$b$-polynomial.'' 
Therefore, the $b$-polynomial of a simplex $S$ captures information about the interior of the cone over $S$, as does the local $h^\ast$-polynomial. 
In this section, we explore the relationship between these two polynomials.
Decomposition of polynomial invariants has been widely studied and, in many settings, builds on the Stapledon decomposition (e.g., \cites{BeckBraunVindas, BeckLeon, BrandenJochemko ,Jochemko, Leon}).

The $h^\ast$-polynomial of an arbitrary $d$-dimensional lattice polytope $P\in\RR^d$ is 
\[
h^\ast(P;z):=(1-z)^{d+1}\left(1+\sum_{n\geq1}|nP\cap\ZZ^d|z^n\right),
\]
and can be computed via the $h^\ast$-polynomials of the simplices in a triangulation of $P$. 
In particular, if $\mathcal{T}$ is a disjoint triangulation of $P$ into $d$-dimensional half-open simplices, then
\[
h^\ast(P;z)=\sum_{S\in \mathcal{T}}h^\ast(S;z).
\]
Here, the $h^\ast$-polynomial of a half-open simplex is computed similarly to that of a closed simplex, but with a modification to the fundamental parallelepiped. 
Suppose $S\subseteq\RR^d$ is a $d$-dimensional simplex with vertices $v_0,\hdots,v_{d}\in \ZZ^d$, where the facets opposite the first $r$ vertices are missing for some $0\leq r\leq d+1$, i.e.,
\[
S=\left\{ \lambda_0v_0+\cdots+\lambda_dv_d : \lambda_0+\cdots+\lambda_d=1 , \lambda_0,\hdots,\lambda_{r-1}>0, \lambda_r,\hdots,\lambda_d\geq0 \right\}.
\]
We define the fundamental parallelepiped of $S$ to be
\begin{equation}\label{eqn:halfopenset}
\Pi_S=\left\{ \sum_{i} \lambda_i r_i : 0<\lambda_0,\hdots,\lambda_{r-1}\leq 1, 0\leq \lambda_r,\hdots,\lambda_d<1 \right\},
\end{equation}
where $r_0,\hdots,r_d$ are the rows of the extended matrix $A$ for $S$ (that is, $r_i=(1,v_i)$).
It then holds that 
\[
h^\ast(P;z)=\sum_{S\in \mathcal{T}}h^\ast(S;z)=\sum_{S\in \mathcal{T}}\sum_{\substack{(x_0,\hdots,x_d)\\ \in\Pi_S\cap\ZZ^{d+1}}}z^{x_0}.
\] 

\begin{theorem}[Stapledon,~\cite{stapledonineq}]
If $P\in\RR^d$ is a lattice polytope and $\ell\geq1$ is the smallest integer such that $\ell P^\circ \cap\ZZ^d$ is nonempty, then there exist unique polynomials $a(P;z)$ and $b(P;z)$ such that
\[
(1+z+\cdots+z^{\ell-1})h^\ast(P;z)=a(P;z)+z^\ell b(P;z),
\]
where $a(P;z)$ and $b(P;z)$ are palindromic polynomials with nonnegative integer coefficients. 
\end{theorem}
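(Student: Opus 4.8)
The plan is to establish the statement in four parts: (i) write down explicit candidates for $a(P;z)$ and $b(P;z)$ and verify the polynomial identity; (ii) verify that these candidates are palindromic; (iii) verify that their coefficients are nonnegative integers; and (iv) prove uniqueness. Parts (i), (ii), and (iv) are formal, while part (iii) is the real content. Throughout put $s\coloneqq\deg h^\ast(P;z)$ and recall Stanley's theorem (a consequence of Ehrhart--Macdonald reciprocity $h^\ast(P^\circ;z)=z^{d+1}h^\ast(P;1/z)$) that $s=d+1-\ell$; also $h^\ast_0=1$ and all $h^\ast_i\ge0$. Let $r(z)\coloneqq z^{s}h^\ast(P;1/z)=\sum_{j=0}^{s}h^\ast_{s-j}z^{j}$ be the reversal of $h^\ast(P;z)$, so that $r$ has the same nonnegative integer coefficients in reverse order, $r(1)=h^\ast(P;1)$, and $h^\ast(P^\circ;z)=z^{\ell}r(z)$.

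For the construction I would set
\[
b(P;z)\coloneqq\frac{h^\ast(P;z)-r(z)}{z-1},\qquad a(P;z)\coloneqq(1+z+\cdots+z^{\ell-1})\,h^\ast(P;z)-z^{\ell}b(P;z).
\]
Because $h^\ast(P;z)$ and $r(z)$ agree at $z=1$, the numerator defining $b(P;z)$ is divisible by $z-1$, so $b(P;z)$ is a genuine polynomial of degree at most $s-1=d-\ell$; and the required identity $(1+z+\cdots+z^{\ell-1})h^\ast(P;z)=a(P;z)+z^{\ell}b(P;z)$ holds by the very definition of $a(P;z)$. Palindromicity is checked by reversal: reversing the degree-$s$ identity $(z-1)b(P;z)=h^\ast(P;z)-r(z)$ sends the right-hand side to its own negative and sends $z-1$ to $-(z-1)$, which forces $b(P;z)$ to equal its degree-$(s-1)$ reversal; and reversing $a(P;z)$ as a degree-$d$ polynomial, using the consequent identity $(z^{\ell}-1)b(P;z)=(1+z+\cdots+z^{\ell-1})\bigl(h^\ast(P;z)-r(z)\bigr)$, shows $a(P;z)$ equals its degree-$d$ reversal. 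Since $a_0=h^\ast_0=1$ we get $a_d=1$ as well, and $\deg\bigl(z^{\ell}b(P;z)\bigr)\le d$; note that the two summands are symmetric about the distinct centers $d/2$ and $(d+\ell)/2$, which is what makes the decomposition rigid.

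Reading off coefficients, the definitions telescope to $b_i=\sum_{j=0}^{i}(h^\ast_{s-j}-h^\ast_{j})$, and $a_i=\sum_{j=0}^{i}h^\ast_j$ for $0\le i<\ell$ while $a_i=\sum_{j=0}^{i}h^\ast_j-\sum_{j=0}^{i-\ell}h^\ast_{s-j}$ for $\ell\le i\le d$. Integrality is immediate, as is nonnegativity of $a_i$ in the range $i<\ell$. What remains --- and this is the crux --- is that $b_i\ge0$, equivalently $h^\ast_0+\cdots+h^\ast_i\le h^\ast_s+h^\ast_{s-1}+\cdots+h^\ast_{s-i}$ for $0\le i\le\floor{(s-1)/2}$ (the remaining indices following from the palindromicity already proved), together with $a_i\ge0$ for $\ell\le i\le d$, which asserts that the first $i-\ell+1$ coefficients of $h^\ast(P^\circ;z)$ sum to at most the first $i+1$ coefficients of $h^\ast(P;z)$. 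These partial-sum inequalities for Ehrhart $h^\ast$-vectors are the substantive content of the theorem; they are not elementary, resting on Stanley's nonnegativity and monotonicity theorems for $h^\ast$-vectors together with Ehrhart--Macdonald reciprocity. I expect this to be the main obstacle. Alternatively, one can bypass the formal construction above and instead build $a(P;z)$ and $b(P;z)$ directly from a lattice triangulation of $P$ via the Betke--McMullen / local-$h^\ast$ expansion of $h^\ast(P;z)$ over the faces of the triangulation, at which point nonnegativity becomes manifest from known $h$-vector inequalities for triangulated balls and spheres, at the cost of more geometric bookkeeping.

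For uniqueness, suppose $a(P;z)+z^{\ell}b(P;z)=a'(z)+z^{\ell}b'(z)$ with $a'$ palindromic of degree $d$ and $b'$ palindromic of degree at most $d-\ell$ (these degree normalizations being forced by the identity, $h^\ast_0=1$, and nonnegativity). Set $\alpha\coloneqq a(P;z)-a'(z)$ and $\beta\coloneqq b(P;z)-b'(z)$, so $\alpha=-z^{\ell}\beta$, with $\alpha$ palindromic of degree at most $d$ and $\beta$ palindromic of degree at most $d-\ell=s-1$. Reversing $\alpha$ as a degree-$d$ polynomial gives $\alpha(z)=z^{d}\alpha(1/z)=-z^{d-\ell}\beta(1/z)=-z^{s-1}\beta(1/z)=-\beta(z)$; combining this with $\alpha=-z^{\ell}\beta$ yields $\beta=z^{\ell}\beta$, hence $(1-z^{\ell})\beta=0$, so $\beta=0$ and then $\alpha=0$. (This is exactly the step that uses the hypothesis $\ell\ge1$.)
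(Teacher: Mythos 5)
The paper does not prove this theorem; it is quoted verbatim from Stapledon's paper and cited as such, so there is no in-paper argument to compare yours against. Judged on its own, your write-up correctly reproduces the formal skeleton of Stapledon's decomposition: setting $r(z) = z^{s}h^{*}(P;1/z)$ with $s = d+1-\ell$, defining $b$ as $(h^{*}-r)/(z-1)$ and $a$ by the required identity, verifying by reversal that $a$ is palindromic about $d/2$ and $b$ about $(d-\ell)/2$, extracting the coefficient formulas $b_i = \sum_{j\le i}(h^{*}_{s-j}-h^{*}_j)$ and $a_i = \sum_{j\le i}h^{*}_j - \sum_{j\le i-\ell}h^{*}_{s-j}$, and running the cancellation argument for uniqueness once the symmetry centers are pinned down. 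All of that is sound.

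However, there is a genuine gap, and you flag it yourself: you never actually prove $a_i\ge 0$ and $b_i\ge 0$. These partial-sum inequalities on the $h^{*}$-vector, namely $h^{*}_0+\cdots+h^{*}_i\le h^{*}_s+\cdots+h^{*}_{s-i}$ and $h^{*}_{s}+\cdots+h^{*}_{s-(i-\ell)}\le h^{*}_0+\cdots+h^{*}_i$, are not corollaries of general nonsense; they are the substantive content of Stapledon's theorem, and establishing them occupies the bulk of his paper (via a triangulation of $P$, the Betke--McMullen/local-$h^{*}$ decomposition, and positivity of $h$-vectors of the relevant simplicial balls and spheres). Writing ``these rest on Stanley's nonnegativity and monotonicity theorems together with reciprocity'' or ``one can bypass this via Betke--McMullen at the cost of more bookkeeping'' identifies where a proof would go but does not constitute one. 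What you have is a correct and well-organized reduction of the theorem to the key inequalities, plus a clean uniqueness argument --- not a proof of the theorem. To close the gap you would need to carry out the triangulation argument (or an equivalent one, e.g.\ Stapledon's original, which relates $(1+z+\cdots+z^{\ell-1})h^{*}(P;z)$ to the $h^{*}$-polynomial of a suitable cone and then invokes positivity for the pieces) in full.
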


Moreover, $a(P;z)$ is equal to the $h^\ast$-polynomial of the \textit{boundary} $\partial P$ of $P$, defined in \cite{BajoBeck} as
\[
h^\ast(\partial P; z):=\frac{h^\ast(P;z)-h^\ast(P^\circ;z)}{1-z}.
\]
The $h^\ast$-polynomial of the boundary of $P$ can also be expressed in terms of $h^\ast$-polynomials of half-open simplices. 
There is a disjoint triangulation $T$ of $\partial P$ into half-open $(d-1)$-dimensional lattice simplices (where exactly one simplex is closed), and for such a triangulation,
\[
h^\ast(\partial P;z)=\sum_{S'\in T} h^\ast(S';z).
\]

In the case where the polytope is a simplex $S$ and where $S$ contains an interior lattice point, i.e., $\ell=1$, then
\[
b(S;z)=\frac{h^\ast(S;z)-h^\ast(\partial S;z)}{z}
\]
captures information about the interior of the cone over $S$. 
The local $h^\ast$-polynomial $B(S;z)$ also captures information about the interior of this cone, and in this case, we are able to directly compare these two polynomials.

\begin{proposition}
Let $S$ be a lattice simplex with an interior lattice point, and let 
\[
b(S;z)=\frac{h^\ast(S;z)-h^\ast(\partial S;z)}{z}
\]
be its $b$-polynomial as in Stapledon's decomposition. 
Then $zb(z)$ is coefficient-wise bounded above by the local \(h^*\)-polynomial $B(S;z)$ of $S$.
\end{proposition}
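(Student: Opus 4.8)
The plan is to express the difference $B(S;z) - z\,b(S;z)$ as a manifestly non-negative combination of box polynomials of faces of $S$. Write $V = \{v_0,\dots,v_d\}$ for the vertex set of $S$, let $r_i = (1,v_i)$ be the rows of the extended matrix $A$, and for $J \subseteq V$ put $F_J = \conv\{v_i : i \in J\}$ and $\Pi^\circ_{F_J} = \{\sum_{i\in J}\lambda_i r_i : 0 < \lambda_i < 1\}$, with $\Pi^\circ_{F_\emptyset} = \{0\}$. Let $B(F_J;z) = \sum_{p\,\in\,\Pi^\circ_{F_J}\cap\ZZ^{d+1}} z^{p_0}$ be the box polynomial of $F_J$; in particular $B(F_V;z) = B(S;z)$, $B(F_\emptyset;z) = 1$, and each $B(F_J;z)$ has non-negative integer coefficients, being a lattice-point generating function.

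First I would record two face-wise decompositions. Sorting the lattice points $\sum_i \lambda_i r_i$ of the fundamental parallelepiped $\Pi_S$ of~\eqref{eqn:fppcoeffset} by their support $J = \{i : \lambda_i > 0\}$ gives a disjoint union $\Pi_S \cap \ZZ^{d+1} = \bigsqcup_{J\subseteq V}\big(\Pi^\circ_{F_J}\cap\ZZ^{d+1}\big)$, hence $h^*(S;z) = \sum_{J\subseteq V}B(F_J;z)$. Applying the same idea to the half-open parallelepiped $\{\sum_i \lambda_i r_i : 0<\lambda_i\le1\}$ attached to the half-open simplex with all $d+1$ facets removed (the $r=d+1$ case of~\eqref{eqn:halfopenset}), whose $p_0$-graded lattice-point generating function equals $h^*(S^\circ;z)$ by the standard computation of the generating function of the interior of the cone over $S$ (equivalently, Ehrhart--Macdonald reciprocity): sorting by $J = \{i : \lambda_i < 1\}$ and peeling off the lattice translate $\sum_{i\notin J}r_i$, which contributes height $d+1-|J|$, gives $h^*(S^\circ;z) = \sum_{J\subseteq V}z^{\,d+1-|J|}\,B(F_J;z)$.

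Next I would rewrite the $b$-polynomial. Since $S$ has an interior lattice point, $b(S;z) = \big(h^*(S;z) - h^*(\partial S;z)\big)/z$ and $h^*(\partial S;z) = \big(h^*(S;z) - h^*(S^\circ;z)\big)/(1-z)$, so $z\,b(S;z) = \big(h^*(S^\circ;z) - z\,h^*(S;z)\big)/(1-z)$. Substituting the two decompositions, the coefficient of $B(F_J;z)$ in $z\,b(S;z)$ is $\big(z^{\,d+1-|J|}-z\big)/(1-z)$, which is $1$ for $J = V$, is $0$ for $|J| = d$ (so the facet terms cancel), and equals $-(z + z^2 + \cdots + z^{\,d-|J|})$ for $|J|\le d-1$. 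Hence
\[ B(S;z) - z\,b(S;z) \;=\; \sum_{\substack{J\subseteq V\\ |J|\le d-1}}\big(z + z^2 + \cdots + z^{\,d-|J|}\big)\,B(F_J;z), \]
the $J = \emptyset$ summand being $z + z^2 + \cdots + z^d$. Each summand is a product of polynomials with non-negative coefficients, so the right-hand side has non-negative coefficients; equivalently, $z\,b(z)\le B(S;z)$ coefficient-wise.

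The only step that needs genuine care is the second decomposition: identifying $h^*(S^\circ;z)$ with the $p_0$-graded generating function of $\{\sum_i\lambda_i r_i : 0<\lambda_i\le1\}\cap\ZZ^{d+1}$ (the half-open-simplex form of reciprocity, where full-dimensionality of $S$ enters, so that $S^\circ$ is the complement of all $d+1$ facets) and verifying that subtracting $\sum_{i\notin J}r_i$ is a height-shifting bijection onto $\bigsqcup_J\big(\Pi^\circ_{F_J}\cap\ZZ^{d+1}\big)$; this rests on the linear independence of $r_0,\dots,r_d$. The first decomposition and the concluding bookkeeping are routine.
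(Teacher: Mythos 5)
Your argument is correct, and it takes a genuinely different route from the paper. The paper triangulates $\partial S$ into half-open facets of $S$, lets $\Pi_T$ be the union of their fundamental parallelepipeds, and shows geometrically that $\Pi_S\setminus\Pi_T$ consists exactly of the ``interior'' lattice points counted by $B(S;z)$, so that $zb(S;z)=B(S;z)-(\text{g.f. of }\Pi_T\setminus\Pi_S)$. Your version instead runs the Betke--McMullen face decomposition $h^*(S;z)=\sum_{J\subseteq V}B(F_J;z)$ together with its reciprocal form $h^*(S^\circ;z)=\sum_{J}z^{d+1-|J|}B(F_J;z)$, then does pure power-series algebra on $zb(S;z)=\bigl(h^*(S^\circ;z)-z\,h^*(S;z)\bigr)/(1-z)$ to obtain the closed formula
\[
B(S;z)-z\,b(S;z)=\sum_{\substack{J\subseteq V\\|J|\le d-1}}\bigl(z+z^2+\cdots+z^{\,d-|J|}\bigr)\,B(F_J;z),
\]
which is manifestly coefficient-wise nonnegative. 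The paper's argument buys a geometric interpretation of the difference as a signed count of lattice points in a symmetric difference of parallelepipeds; yours buys an explicit nonnegative combinatorial expansion of the gap in terms of box polynomials of proper faces, which is arguably sharper (it identifies precisely which face contributions $zb(S;z)$ discards, and makes the vanishing of the codimension-one terms transparent). Both hinge on the hypothesis of an interior lattice point only through the $\ell=1$ form of $b(S;z)$; your half-open reciprocity step and the disjointness of the support decomposition are both standard and correctly invoked.
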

\begin{proof}
Use the facets of \(S\) as the maximal faces in a triangulation \(T\) of $\partial S$.
Arrange the facets of \(S\) into disjoint half-open simplices such that exactly one simplex is closed, e.g., using a visibility construction as in \cite{CRT}*{Chapter 5} or \cite{BajoBeck}. Let
\[
\Pi_T=\bigcup_{S'\in T}\Pi_{S'}
\]
be the union over the fundamental parallelepipeds of the half-open simplices in $T$. 
Then using~\eqref{eqn:halfopenset} we obtain
\begin{align*}
		zb(S;z)&=h^\ast(S;z)-h^\ast(\partial S,z)=\left(\sum_{z\in \Pi_S\cap\ZZ^{d+1}} z^{x_0}\right)-\left(\sum_{x\in \Pi_T\cap\ZZ^{d+1}}z^{x_0}\right)\\
		&= \left(\sum_{z\in (\Pi_S\setminus \Pi_T)\cap\ZZ^{d+1}}z^{x_0}\right)  -\left(\sum_{z\in (\Pi_T\setminus \Pi_S)\cap\ZZ^{d+1}}z^{x_0}\right).
  \end{align*}
Note that \(\Pi_T\) depends on the specific choice of half-open simplices in \(T\), but the polynomial 
\[
\sum_{x\in \Pi_T\cap\ZZ^{d+1}}z^{x_0}
\]
is independent of this choice. 
Observe that $\Pi_T$ contains all points in $\Pi_S$ where some coefficient \(\lambda_i\) as in~\eqref{eqn:fppcoeffset} is \(0\) (namely, all coefficients of the row vectors of $A$ corresponding to vertices that are opposite a boundary facet containing the point) and $\Pi_T$ contains no points in $\Pi_S$ where each coefficient is positive. 
Therefore, the lattice points in $\Pi_S\setminus \Pi_T$ are precisely those with no coefficient equal to 0, that is, the points counted by the local \(h^*\)-polynomial $B(S;z)$.
Thus,
	\[
		zb(S;z)=B(S;z)-\left(\sum_{z\in (\Pi_T\setminus \Pi_S)\cap\ZZ^{d+1}}z^{x_0}\right).
	\]
	In particular, $zb(S;z)$ is upper-bounded by $B(S;z)$, coefficient-wise.
\end{proof}

We remark that the same result does not follow if $S$ does not contain an interior point, as now there is a factor in front of $h^\ast(S;z)$ in Stapledon's decomposition:
\[
	z^\ell b(S;z)=(1+z+\hdots+z^{\ell-1})h^\ast(S;z)-h^\ast(\partial S;z).
\]
For example, $S=\text{conv}\{(0,0,0),(1,0,0),(0,1,0),(1,1,4)\}$ has local \(h^*\)-polynomial
\[
	B(S;z)=4z^2
\]
and $b$-polynomial
\[
	b(S;z)=2+2z \text{ (and $z^\ell b(S;z)=2z^2+2z^3$)}.
\]

We conclude this subsection with an observation about the implication of Stapledon's decomposition for \textit{shifted symmetric polytopes}, as explored by Hibi, Higashitani, and Li in~\cite{hibihermite}.

\begin{definition}
	The $h^\ast$-polynomial $h^\ast_P(z)=h^\ast_0+h^\ast_1 z+\hdots h^\ast_d z^d$ of a lattice polytope $P$ is \emph{shifted symmetric} if $h^\ast_i=h^\ast_{d+1-i}$ for $i=1,\hdots,d$.
\end{definition}

Hibi, Higashitani, and Li~\cite{hibihermite} proved that for a one-row Hermite normal form simplex \(S\) satisfying the conditions of Theorem~\ref{thm:boxhstarconditions}, the \(h^*\)-polynomial of \(S\) is shifted symmetric.
The following proposition due to Higashitani uses Stapledon decompositions to show that any lattice polytope with a shifted symmetric \(h^*\)-polynomial is a simplex with a similar geometric property to those in Theorem~\ref{thm:boxhstarconditions}.

\begin{proposition}[Higashitani~\cite{higashitanishifted}]
	If \(P\) is a lattice polytope such that $h^\ast_P(z)$ is shifted symmetric, then $P$ is a simplex with unimodular facets.
 Thus, \(h^*(P;z)=1+B(P;z)\).
\end{proposition}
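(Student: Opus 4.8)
The plan is to deduce from shifted symmetry that the \emph{boundary} $h^*$-polynomial of $P$ is forced to be $h^*(\partial P;z)=1+z+\cdots+z^d$, and then to read off from this that $P$ must be a simplex each of whose facets is unimodular. First I would recall that, by Ehrhart--Macdonald reciprocity in $h^*$-form \cite{ccd}, $h^*(P^\circ;z)=z^{d+1}h^*(P;1/z)=\sum_{i=0}^{d}h^*_i\,z^{d+1-i}$. Substituting this into $h^*(\partial P;z)=\bigl(h^*(P;z)-h^*(P^\circ;z)\bigr)/(1-z)$ and using the substitution $i\mapsto d+1-i$ together with the shifted-symmetry relations $h^*_i=h^*_{d+1-i}$ for $1\leq i\leq d$, the two ``middle'' sums cancel and one is left with $h^*(\partial P;z)=h^*_0\,(1-z^{d+1})/(1-z)=1+z+\cdots+z^d$. (Equivalently, in the language of Stapledon's theorem this says that the $a$-polynomial in the decomposition of $h^*(P;z)$ equals $1+z+\cdots+z^d$.)

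Next I would extract the geometry. Let $T$ be a disjoint triangulation of $\partial P$ into half-open $(d-1)$-dimensional lattice simplices as recalled above, so that $h^*(\partial P;z)=\sum_{S'\in T}h^*(S';z)$. Evaluating at $z=1$ gives $d+1=\sum_{S'\in T}\mathrm{nvol}(S')$, a sum of positive integers, so $T$ has at most $d+1$ cells. On the other hand each cell of $T$ lies in a unique facet of $P$ and each facet of $P$ contains at least one cell, so $T$ has at least as many cells as $P$ has facets, and a $d$-polytope has at least $d+1$ facets. Hence $T$ has exactly $d+1$ cells, each of normalized volume $1$, and $P$ has exactly $d+1$ facets; thus $P$ is a $d$-simplex, each facet of $P$ is the support of exactly one cell of $T$, and each facet of $P$ is therefore unimodular.

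It remains to justify ``$h^*(P;z)=1+B(P;z)$''. Since any face of a unimodular simplex is again a unimodular simplex, every proper face of the simplex $P$ is unimodular, so its fundamental parallelepiped contains no nonzero lattice point, and in particular its \emph{open} parallelepiped contains no lattice point at all. The half-open fundamental parallelepiped $\Pi_P$ decomposes, according to which of the coefficients $\lambda_i$ in~\eqref{eqn:fppcoeffset} vanish, into the open parallelepipeds of the faces of $P$, with the empty face contributing only the origin. By the previous sentence the only lattice points of $\Pi_P$ are the origin together with the lattice points of the open parallelepiped of $P$ itself; grading by height then gives $h^*(P;z)=z^0+B(P;z)=1+B(P;z)$. (Equivalently, every nonzero element of the parallelepiped group of $P$ has all coordinates nonzero, which is exactly the criterion used in the proof of Theorem~\ref{thm:boxhstarconditions}.)

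There is no single hard step here: the content is entirely in the first paragraph, whose only genuinely load-bearing ingredient is the cancellation producing $h^*(\partial P;z)=1+z+\cdots+z^d$, and the remaining external inputs (Ehrhart--Macdonald reciprocity, the half-open identity $h^*(\partial P;z)=\sum_{S'}h^*(S';z)$ recalled in the excerpt, the fact that a $d$-polytope has at least $d+1$ facets with equality only for simplices, and the face-decomposition of the fundamental parallelepiped) are standard. Accordingly, the one place I would be careful is making sure the triangulation used for the $z=1$ volume count is one for which the half-open identity holds, since once $h^*(\partial P;z)$ is pinned down the rest is bookkeeping.
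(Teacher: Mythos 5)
The paper states this result as a citation to Higashitani and does not reproduce a proof, so there is no in-paper argument to compare against; your task here is therefore to supply one, and your argument does so correctly. The cancellation in the first paragraph pins down $h^*(\partial P;z)=1+z+\cdots+z^d$ (the Stapledon $a$-polynomial), the $z=1$ volume count against a half-open boundary triangulation $T$ forces $|T|\le d+1$ while the facet count forces $|T|\ge d+1$ with equality only for a simplex, and the identification of each facet with a single unimodular cell is the right conclusion. The last paragraph is also sound: once $P$ is a simplex with unimodular facets, every proper face is unimodular, the face-stratification of $\Pi_P$ by the vanishing pattern of the $\lambda_i$ leaves only the origin outside the open box, and $h^*(P;z)=1+B(P;z)$ follows. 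One place where it is worth being slightly more explicit is the step ``each cell of $T$ lies in a unique facet'': since each cell is a full-dimensional convex subset of $\partial P$, its affine hull must agree with the affine hull of a single facet (two distinct facets meet in dimension $<d-1$), so the cell is contained in exactly that facet. With that observation spelled out, the proof is complete, and it is consistent with the standard route via Stapledon's decomposition that Higashitani's cited paper uses.
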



\section{Two Illustrative Examples}\label{sec:examples}

In this section, we analyze two examples of one-row Hermite normal form simplices that illustrate important phenomena and proof techniques.
We focus on those simplices where the non-trivial row is either the ``all-ones'' vector $\mleft(1,\dots,1,N\mright)$ or the ``geometric sequence'' vector $\mleft(q^{k-1}, \dots, q,1,q^k\mright)$.
For the all-ones simplex, in Corollary~\ref{cor:allonescharacteriation}  we classify the values of \(N\) for which the local \(h^*\)-polynomial is unimodal. 
For the geometric series simplex, we prove in Theorem~\ref{thm:geometricunimodalbox} that the local \(h^*\)-polynomial is always unimodal.
These results are independently interesting and they motivate the results in Section~\ref{sec:limits}.

\subsection{Non-trivial row \texorpdfstring{$\mleft(1,\dots,1,N\mright)$}-}\label{sec:allones}

In this subsection, let $N \in \ZZ$ be a fixed integer with\\ $N>1$.
We consider a $d$-simplex $S$ in one-row Hermite normal form with last row $\mleft(1, \dots, 1, N\mright)$.
By~\eqref{eq:onerowinverse} we have that
\[
	\begin{bmatrix}
		1 & 0 & 0 & 0 & \cdots & 0 & 0 & 0\\
		1 & 1 & 0 & 0 & \cdots & 0 & 0 & 0\\
		1 & 0 & 1 & 0 & \cdots & 0 & 0 & 0\\
		1 & 0 & 0 & 1 & \cdots & 0 & 0 & 0\\
		\vdots & \vdots & \vdots & \vdots & \ddots & \vdots & \vdots & \vdots \\		1 & 0 & 0 & 0 & \cdots & 1 & 0 & 0\\
		1 & 0 & 0 & 0 & \cdots & 0 & 1 & 0\\
		1 & 1 & 1 & 1 & \cdots & 1 & 1 & N
	\end{bmatrix}^{-1} =
	\begin{bmatrix}
		\hphantom{-}1 & \hphantom{-}0 & \hphantom{-}0 & \hphantom{-}0 & \cdots & \hphantom{-}0 & \hphantom{-}0 & \hphantom{-}0\\
		-1 & \hphantom{-}1 & \hphantom{-}0 & \hphantom{-}0 & \cdots & \hphantom{-}0 & \hphantom{-}0 & \hphantom{-}0\\
		-1 & \hphantom{-}0 & \hphantom{-}1 & \hphantom{-}0 & \cdots & \hphantom{-}0 & \hphantom{-}0 & \hphantom{-}0\\
		-1 & \hphantom{-}0 & \hphantom{-}0 & \hphantom{-}1 & \cdots & \hphantom{-}0 & \hphantom{-}0 & \hphantom{-}0\\
		\vdots & \vdots & \vdots & \vdots & \ddots & \vdots & \vdots & \vdots \\		-1 & \hphantom{-}0 & \hphantom{-}0 & \hphantom{-}0 & \cdots & \hphantom{-}1 & \hphantom{-}0 & \hphantom{-}0\\
		-1 & \hphantom{-}0 & \hphantom{-}0 & \hphantom{-}0 & \cdots & \hphantom{-}0 & \hphantom{-}1 & \hphantom{-}0\\
		\frac{(d-2)}N & -\frac{1}N & -\frac{1}N & -\frac{1}N & \cdots & -\frac{1}N & -\frac{1}N & \frac1N
	\end{bmatrix} \, .
\]
Hence, the parallelepiped group of $S$ is given by
\[
	\Gamma = \mleft(\ZZ^{d+1}+\ZZ{\mleft(\frac{(d-2)}N, -\frac1N, \dots, -\frac1N,\frac1N\mright)}^t\mright)\bigg/\; \ZZ^{d+1}
\]
Recall that the \emph{$h^*$-vector} $h^*=(h_0^*,h_1^*, \dots, h_d^*)$ of $S$ is given by
\[
	h_i^* = \#\mleft\{ x \in \Gamma : \age(x) = i \mright\} \, .
\]
We can compute the local \(h^*\)-polynomial $B(S;z) = \sum_{i=0}^d b_i z^i$ as follows; parameterizing the points in \(\Gamma\) using \(0\leq k\leq N-1\),
\begin{align*}
	b_i &= \left|\mleft\{ x = \mleft(\frac{-k(d-2)}N,\frac kN, \dots, \frac kN, -\frac kN\mright) + \ZZ^{d+1}\in \Gamma : \age(x)=i, \frac{-k(d-2)}N \notin \ZZ \mright\}\right|\\
	&=\left|\mleft\{ k=1, \dots, N-1 : i=1+\fractional{\frac{-k(d-2)}N}+\frac{k(d-2)}N, \fractional{\frac{-k(d-2)}N}\neq0\mright\}\right|\\
	&=\left|\mleft\{ k=1, \dots, N-1 : i=1 - \floor{\frac{-k(d-2)}N}, N \nmid k(d-2) \mright\}\right|\\
	&=\left|\mleft\{ k=1, \dots, N-1 : i=1 + \ceil{\frac{k(d-2)}N}, N \nmid k(d-2) \mright\}\right|.
\end{align*}

Our goal in this subsection is to prove the following theorem.

\begin{theorem}\label{thm:allonesboxpoly}
    For the $d$-simplex $S$ in one-row Hermite normal form with last row \\$\mleft(1, \dots, 1, N\mright)$, the local \(h^*\)-polynomial of \(S\) has every non-zero coefficient in \(\{q,q+1\}\) where \(N = (d-2)q+r\) for some \(0\leq r \leq d-3\).
    Further, the local \(h^*\)-polynomial is unimodal if and only if \(r\in\{0,1,2,d-3\}\) and it has constant coefficients if and only if \(r\in\{0,1\}\).
\end{theorem}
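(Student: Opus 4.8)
The plan is to reduce the whole statement to counting integers in short real intervals, starting from the combinatorial description of the coefficients obtained just above, namely
\[
b_i=\#\mleft\{\,k\in\{1,\dots,N-1\}\ :\ i=1+\ceil{\tfrac{k(d-2)}{N}}\ \text{ and }\ N\nmid k(d-2)\,\mright\}.
\]
For $k$ in this range with $N\nmid k(d-2)$ the number $\tfrac{k(d-2)}{N}$ is a non-integer in $(0,d-2)$, so $b_i=0$ unless $2\le i\le d-1$; for such $i$ set $j:=i-1\in\{1,\dots,d-2\}$. The conditions ``$\ceil{k(d-2)/N}=j$'' and ``$N\nmid k(d-2)$'' are jointly equivalent to $\tfrac{(j-1)N}{d-2}<k<\tfrac{jN}{d-2}$, and every integer in that interval automatically lies in $\{1,\dots,N-1\}$. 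Hence
\[
b_{j+1}=\#\mleft\{k\in\ZZ:\tfrac{(j-1)N}{d-2}<k<\tfrac{jN}{d-2}\mright\}=\ceil{\tfrac{jN}{d-2}}-\floor{\tfrac{(j-1)N}{d-2}}-1\qquad(1\le j\le d-2),
\]
and all remaining coefficients are $0$.

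Next I would substitute $N=(d-2)q+r$ with $0\le r\le d-3$. Using $\ceil{\tfrac{jN}{d-2}}=jq+\ceil{\tfrac{jr}{d-2}}$ and $\floor{\tfrac{(j-1)N}{d-2}}=(j-1)q+\floor{\tfrac{(j-1)r}{d-2}}$, the formula collapses to
\[
b_{j+1}=q+\ceil{\tfrac{jr}{d-2}}-\floor{\tfrac{(j-1)r}{d-2}}-1 .
\]
Since $\ceil{x}-\floor{x}\in\{0,1\}$ and since consecutive values $\tfrac{(j-1)r}{d-2},\tfrac{jr}{d-2}$ differ by $\tfrac{r}{d-2}<1$ (so their floors differ by $0$ or $1$), it follows at once that $b_{j+1}\in\{q,q+1\}$, and that $b_{j+1}=q+1$ precisely when some integer lies strictly between $\tfrac{(j-1)r}{d-2}$ and $\tfrac{jr}{d-2}$. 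That settles the first assertion. (The description is cleanest when $\gcd(d-2,N)=1$, which I take as the working hypothesis; the degenerate case $(d-2)\mid N$, i.e.\ $r=0$, instead makes every $b_{j+1}$ equal to $q-1$, consistently with the ``constant'' clause.)

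For unimodality and constancy, put $J:=\mleft\{\,j:1\le j\le d-2,\ b_{j+1}=q+1\,\mright\}$. The step function $j\mapsto\floor{\tfrac{jr}{d-2}}$ climbs from $0$ at $j=0$ to $r$ at $j=d-2$ in unit jumps; for $1\le j\le d-3$ its jump between $j-1$ and $j$ is exactly the event $j\in J$, while $j=d-2$ never lies in $J$ because $\tfrac{(d-2)r}{d-2}=r\in\ZZ$. Hence $|J|=r-1$, its elements are $j_m=\ceil{\tfrac{m(d-2)}{r}}$ for $m=1,\dots,r-1$, and $j_1<\cdots<j_{r-1}$. Since the coefficient string $(b_2,\dots,b_{d-1})$ takes values in $\{q,q+1\}$, it is unimodal if and only if $J$ is an interval, i.e.\ iff $j_{r-1}-j_1=r-2$. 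Inserting $j_{r-1}=(d-2)-\floor{\tfrac{d-2}{r}}$ and $j_1=\ceil{\tfrac{d-2}{r}}$ and splitting into the cases $r\mid(d-2)$ and $r\nmid(d-2)$ (using $d-2>r$), one checks that this equality holds exactly for $r\in\{1,2,d-3\}$; the values $r\le2$, $r=d-3$ and $r=0$ are anyway immediate, as there $|J|\le1$, or $J$ is visibly a block, or all coefficients coincide. This yields ``unimodal $\iff r\in\{0,1,2,d-3\}$''. Finally the string is constant iff $|J|\in\{0,d-2\}$; as $d-2\notin J$ we have $|J|\le d-3<d-2$, so constancy forces $|J|=0$, i.e.\ $r=1$, together with $r=0$; hence the coefficients are constant iff $r\in\{0,1\}$.

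The step I expect to be the main obstacle is this last case analysis, pinning down for exactly which $r$ the jump locations $j_m=\ceil{m(d-2)/r}$ form a contiguous block; everything else is a routine reorganization of the combinatorial count into floor and ceiling arithmetic once the interval formula for $b_{j+1}$ is in hand. The only other places demanding care are the extreme values $r=0$ and $r=d-3$, and — if one wants the statement for arbitrary $N$ — the behavior when $\gcd(d-2,N)>1$, where the coefficient string turns out to be a repeated copy of the coprime pattern.
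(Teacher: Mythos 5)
Your reduction of the statement to counting lattice points in the open intervals $\bigl(\tfrac{(j-1)N}{d-2},\tfrac{jN}{d-2}\bigr)$, yielding the closed formula
\[
b_{j+1}=q+\ceil{\tfrac{jr}{d-2}}-\floor{\tfrac{(j-1)r}{d-2}}-1 \qquad (1\le j\le d-2),
\]
is correct and is a genuinely different packaging of the same combinatorics from the paper. The paper instead picks out, for each $i$, the smallest $k$ with $\floor{(d-2)k/N}=i$, proves $k(d-2)\equiv\overline{-ir}\pmod N$, and derives in Proposition~\ref{prop:delta-i-vals} that the coefficient is $q+1$ exactly when $1\le(d-2)-\overline{ir}\le r-1$; the unimodality classification in Theorem~\ref{thm:delta-unimodal-a-prime} then runs a two-case argument ($r>\tfrac{d-2}{2}$ versus $r<\tfrac{d-2}{2}$) on the cyclic sequence $(d-2)-\overline{ir}$. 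Your framing — that the set of positions where the coefficient is $q+1$ is exactly the set of jump indices $j_m=\ceil{m(d-2)/r}$, $m=1,\dots,r-1$, of the floor of $jr/(d-2)$, and that unimodality amounts to $j_{r-1}-j_1=r-2$, i.e.\ $\floor{(d-2)/r}+\ceil{(d-2)/r}=(d-2)-r+2$ — is arguably tighter; it replaces the paper's modular case split with a single floor identity.

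The spot you flag yourself as the ``main obstacle'' is, however, a genuine gap: you assert but do not verify that this identity holds exactly for $r\in\{1,2,d-3\}$. It does go through, but it needs an argument (write $p=\floor{(d-2)/r}$ and $(d-2)=pr+s$ with $0<s<r$; the identity becomes $(p-1)r=2p-s-1$, and bounding $p(r-2)\le r-1$ forces $p\le1+\tfrac1{r-2}$ for $r\ge3$, hence $p=1$, i.e.\ $r=d-3$, while $r=2$ always works because then $d-2$ must be odd). A second gap you only wave at is the case $\gcd(d-2,N)>1$: the paper's Corollary~\ref{cor:allonescharacteriation} shows the coefficient string is a $b$-fold concatenation of the coprime pattern (with $b=\gcd(d-2,N)$), which gives a strictly \emph{different} unimodality criterion, $r\in\{0,b\}$; note that the theorem as literally stated (``unimodal iff $r\in\{0,1,2,d-3\}$'') therefore really needs $\gcd(d-2,N)=1$ as a hypothesis, and neither your argument nor the paper's proof establishes the stated phrasing without it. Your observation that the extreme case $r=0$ yields coefficients equal to $q-1$ (not in $\{q,q+1\}$) is correct; it points to a small inaccuracy in the theorem statement and is consistent with the paper's Corollary~\ref{cor:allonescharacteriation}, which treats that case via the concatenation structure.
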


\begin{example}\label{ex:allonesbox}
Figure~\ref{fig:allonesdist} displays the local \(h^*\)-polynomial distribution for the all ones non-trivial row with \(d=17\) and \(N=22\cdot 15+1\).
As claimed by Theorem~\ref{thm:allonesboxpoly}, since \(r=1\), the local \(h^*\)-polynomial is both constant and unimodal.
\end{example}

The remainder of this subsection is devoted to a proof of Theorem~\ref{thm:allonesboxpoly}.
To simplify the notation, let $a, N \in \NN$ be two natural numbers.
Note that in our above setup, we have $a=d-2$.
Throughout the remainder of this section, we divide $N$ by $a$ with a remainder, i.e., $N=a\cdot q + r$ for some $q, r \in \NN$ with $0\le r < a$.
We define a vector $\alpha = (\alpha_1, \dots, \alpha_a) \in \ZZ^a$ as follows;
\[
	\alpha_i = \#\mleft\{k = 1, \dots, N-1 : \ceil{\frac{ak}N} = i, N \nmid ak \mright\}.
\]
Thus, when \(a=d-2\), we have that the local \(h^*\)-polynomial coefficient is 
\[
b_i=\alpha_{i-1}
\]
and hence, the coefficients of the local \(h^*\)-polynomial are a shift of the vector \(\alpha\).
We focus on the case that $a$ and $N$ are coprime and hence assume until the end of the section that $\gcd(a,N)=1$.
This assumption means we do not need to consider the condition $N \nmid ak$.

Let us begin with an example to illustrate our general strategy.

\begin{figure}[ht]
		\begin{tikzpicture}[scale=.6]
			\draw[-latex] (-.5,0) -- (12.5,0) node[right] {\tiny$k$};
			\draw[-latex] (0,-.5) -- (0,5.5) node[above] {$\ceil{\frac{ak}N}$};
			\foreach \l in {1,...,5}
				{
					\draw[very thin,dashed] (0,\l) -- (12.5,\l);
					\draw (-.25,\l) node[left] {\tiny$\l$} -- (.25,\l);
				}
			\foreach \k in {1,...,12}
				{
					\draw (\k,-.25) -- (\k,.25);
					\fill[fill=lightgray] (\k, 5*\k/12) circle (3pt);
				}
			\fill (1,1) circle (3pt);
			\fill (2,1) circle (3pt);
			\fill (3,2) circle (3pt);
			\fill (4,2) circle (3pt);
			\fill (5,3) circle (3pt);
			\fill (6,3) circle (3pt);
			\fill (7,3) circle (3pt);
			\fill (8,4) circle (3pt);
			\fill (9,4) circle (3pt);
			\fill (10,5) circle (3pt);
			\fill (11,5) circle (3pt);

			\node[below] at (1,-.25) {\tiny$1$};
			\node[below] at (3,-.25) {\tiny$3$};
			\node[below] at (5,-.25) {\tiny$5$};
			\node[below] at (8,-.25) {\tiny$8$};
			\node[below] at (10,-.25) {\tiny$10$};
		\end{tikzpicture}
		\caption{The distribution of the values $\ceil{ak/N}$ for $k=1,\dots,N$.\label{ex:dist-ceil-vals}}
	\end{figure}

\begin{example}
Suppose that $a=5$ and $N=12$.
Thus, we have $N = q \cdot a + r$ where $q=2$ and $r=2$.
From the definition, we see that  the $\alpha_i$'s depend on the distribution of the values $\ceil{ak/N}$ for $k = 1, \dots, N-1$.
Figure~\ref{ex:dist-ceil-vals} illustrates this distribution with respect to the multiples $k$.
From Figure~\ref{ex:dist-ceil-vals}, we deduce $\alpha=(2,2,3,2,2)$.
Notice that the fraction $12\cdot a/N$ does not contribute to the $\alpha$-vector since $N=12$ and thus $N\mid 12\cdot a$.
Furthermore, observe that fractions $ak/N$ contributing to the same entry $\alpha_i$ (for some fixed $i=1,\dots,a$) appear consecutively and the possible values for $\alpha_i$ are either $q=2$ or $q+1=3$.

Our proof strategy relies on studying the values $i$ for which $\alpha_i=q$ (respectively $\alpha_i=q+1$).
Fix $i=0,\dots,a-1$ and consider the smallest $k=0,\dots,N-1$ such that $\floor{ak/N}=i$.
Notice that for our example with $a=5$ and $N=12$, we have that 
	\[
	ak = 5k \equiv \overline{-i\cdot 2} \pmod{12} \equiv \overline{-ir} \pmod{N},
	\]
	where $\overline{\parbox{1em}{\centering$\cdot$}}$ denotes smallest residue mod $a$.
	Furthermore, observe that for our example we have
	\[
		\alpha_{i+1} = \begin{cases}
			q+1 & \text{if $1\le a - \overline{ir} \le r-1$}\\
			q & \text{if $r\le a - \overline{ir} \le a$}.
		\end{cases}
	\]
This is true in general and is proven in Proposition~\ref{prop:delta-i-vals}.

Since $a$ and $N$ are coprime, it follows that $\gcd(a,r) = 1$, and thus $\ZZ/a\ZZ = \{ \overline{ir} : i=0, \dots a-1 \}$.
As $r-1=2-1=1$, it follows that exactly one entry of the vector $\alpha$ is equal to $q+1$, and thus $\alpha$ is unimodal.
With similar arguments, one can identify exactly when the vector $\alpha$ is unimodal in general.
This is shown in Theorem~\ref{thm:delta-unimodal-a-prime}.
\end{example}

\begin{proposition}\label{prop:delta-i-vals}
Let $i=0,\dots,a-1$ and let $k=0,\dots,N-1$ be the smallest multiple with $\floor{ak/N}=i$.
Then $ak \equiv \overline{-ir} \pmod N$ and
	\[
		\alpha_{i+1} = \begin{cases}
			q+1 & \text{if $1\le a - \overline{ir} \le r-1$}\\
			q & \text{if $r \le a - \overline{ir} \le a$}
		\end{cases}\;,
	\]
	where $\overline{\parbox{1em}{\centering$\cdot$}}$ denotes smallest residue mod $a$.
\end{proposition}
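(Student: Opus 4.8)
The plan is to argue directly about the vector $\alpha = (\alpha_1,\dots,\alpha_a)$, which by the reduction carried out above is a shift of the local $h^*$-vector; since $\gcd(a,N)=1$ we never have $N\mid ak$ for $1\le k\le N-1$, so throughout $\alpha_i=\#\{1\le k\le N-1:\ceil{ak/N}=i\}$. For the congruence, note that $k\mapsto\ceil{ak/N}$ is nondecreasing and that the least $k$ with $\ceil{ak/N}\ge i$ — equivalently with $ak\ge iN$ — is $k_i:=\ceil{iN/a}$. When $q\ge 1$ one has $a\le N$, hence $\floor{ak_i/N}=i$ and $k_i\in\{0,\dots,N-1\}$, so $k_i$ is the ``smallest multiple'' of the statement. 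By construction $ak_i$ is the least multiple of $a$ that is $\ge iN$, so $0\le ak_i-iN\le a-1$; and since $N\equiv r\pmod a$ we get $ak_i-iN\equiv -ir\pmod a$. Therefore $ak_i-iN$ is exactly the least nonnegative residue $\overline{-ir}$, giving $ak_i=iN+\overline{-ir}\equiv\overline{-ir}\pmod N$.

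Next I would obtain the formula for $\alpha_{i+1}$ by counting integer points. From $\ceil{ak/N}=i+1\iff iN<ak\le (i+1)N\iff iN/a<k\le (i+1)N/a$, together with the observation that for $1\le k\le N-1$ the right endpoint is an integer only when $i=a-1$ and $k=N$ (which is outside the range), it follows that $\alpha_{i+1}$ counts the integers in the open interval $(iN/a,(i+1)N/a)$, so
\[
\alpha_{i+1}=\ceil{\frac{(i+1)N}{a}}-\floor{\frac{iN}{a}}-1 \, .
\]
Writing $N=aq+r$ and $ir=a\floor{ir/a}+\overline{ir}$, one has $\floor{iN/a}=iq+\floor{ir/a}$ and $\ceil{(i+1)N/a}=(i+1)q+\ceil{(ir+r)/a}=(i+1)q+\floor{ir/a}+\ceil{(\overline{ir}+r)/a}$, and the resulting telescoping collapses to $\alpha_{i+1}=q+\ceil{(\overline{ir}+r)/a}-1$. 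These are precisely the sort of floor/ceiling identities assembled in Section~\ref{sec:floor}.

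Finally I would read off the two cases. Coprimality of $a$ and $N$ forces $\gcd(a,r)=1$, so apart from the trivial case $a=1$ we have $1\le r\le a-1$; combined with $0\le\overline{ir}\le a-1$ this gives $1\le\overline{ir}+r\le 2a-2$, whence $\ceil{(\overline{ir}+r)/a}$ equals $2$ exactly when $\overline{ir}+r>a$ and equals $1$ otherwise. Rewriting $\overline{ir}+r>a$ as $a-\overline{ir}\le r-1$ and noting that $a-\overline{ir}$ ranges over $\{1,\dots,a\}$, we conclude $\alpha_{i+1}=q+1$ iff $1\le a-\overline{ir}\le r-1$ and $\alpha_{i+1}=q$ iff $r\le a-\overline{ir}\le a$, as claimed. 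There is no conceptually deep step here: the content is ``count the integers in an interval, then reduce its endpoints mod $a$,'' and the one place that demands care is the floor/ceiling bookkeeping in the second paragraph, which is why those manipulations are factored out into Section~\ref{sec:floor}.
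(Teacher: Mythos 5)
Your argument is correct and reaches the same conclusion by the same underlying counting strategy, but with noticeably cleaner bookkeeping, so a brief comparison is worthwhile. For the congruence, you obtain $ak_i - iN = \overline{-ir}$ in one stroke (least multiple of $a$ that is $\ge iN$, reduce mod $a$), which handles $i=0$ uniformly; the paper instead treats $i=0$ separately and for $i\ge 1$ derives $ak - iN = a - \overline{ir}$, which coincides with $\overline{-ir}$ only when $\overline{ir}\ne 0$. For the count, the paper explicitly enumerates the $q$ or $q+1$ multiples of $a$ landing in $[iN,(i+1)N)$ in each of the two cases, whereas you derive the closed form
\[
\alpha_{i+1} = \mleft\lceil \tfrac{(i+1)N}{a} \mright\rceil - \mleft\lfloor \tfrac{iN}{a} \mright\rfloor - 1 = q + \mleft\lceil \tfrac{\overline{ir}+r}{a} \mright\rceil - 1
\]
and read off the dichotomy from the single ceiling $\ceil{(\overline{ir}+r)/a}\in\{1,2\}$. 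That exact expression is actually a small bonus over the statement itself, and it makes the case split transparent rather than something to be verified by hand. One cosmetic slip to fix: you write ``the least $k$ with $\ceil{ak/N}\ge i$ --- equivalently with $ak\ge iN$,'' but $\ceil{ak/N}\ge i$ is $ak>(i-1)N$; you want $\floor{ak/N}\ge i$ (or $\ceil{ak/N}\ge i+1$). The conclusion $k_i=\ceil{iN/a}$ is unaffected. Also be sure to record that the whole argument lives under the standing assumption $\gcd(a,N)=1$ (hence $\gcd(a,r)=1$) and $q\ge 1$, which you do invoke but only in passing.
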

\begin{proof}
The equation $ak \equiv \overline{-ir} \pmod N$ can be verified for $i=0$.
Suppose $i=1,\dots, a-1$.
Then the smallest $k = 0, \dots, N-1$ with $\floor{ak/N} = i$ satisfies \[a(k-1) < iN \le ak\] and thus \begin{equation}\label{eqn:condition} 
0<iN-a(k-1)<a.\end{equation}
Observe that \[iN = iaq + ir = a(k-1) + x\] for some $x\in\ZZ$ with $x \equiv ir \pmod a$.
With the condition from (\ref{eqn:condition}), we deduce that $x = \overline{ir}$.
Thus, \[iN + (a-\overline{ir})= a(k-1) + \overline{ir} + (a-\overline{ir})= ak,\] which yields $ak \equiv \overline{-ir} \pmod N$.

Notice that for $i=1,\dots, a-1$, we have that
	\[
		\alpha_{i+1}= \# \mleft\{ l \in \ZZ : i + \frac{a-\overline{ir}}N \le i + \frac{al + (a - \overline{ir})}N < i+1 \mright\}.
	\]
Furthermore, for $i=0$ it follows that $\alpha_1 = \{ l \in \ZZ : a/N \le (al + a)/N < 1\}$.
Hence, the second statement of our proposition is equivalent to
	\[
		\#\mleft\{l\in\ZZ : i + \frac{a-\overline{ir}}N \le i+\frac{al+(a-\overline{ir})}N < i+1\mright\} = \begin{cases}
			q+1 & \text{if} \quad 1 \le a - \overline{ir} \le r-1\\
			q   & \text{if} \quad r \le a - \overline{ir} \le a
		\end{cases}\;.
	\]
We begin with the case $1\le a - \overline{ir} \le r-1$.
Clearly, we have that
	\[
		i+\frac{a-\overline{ir}}N \le \underbrace{i+\frac{a-\overline{ir}}N, i+\frac{a+(a-\overline{ir})}N, \dots, i+\frac{a \cdot q + (a - \overline{ir})}N}_{q+1 \; \text{many}} < i+1.
	\]
Furthermore, notice that $i+(a\cdot (q+1) + (a-\overline{ir})) / N > i+1$.
Hence, the first case follows.

Next, suppose that $r\le a - \overline{ir} \le a-1$.
We have that
	\[
		i+\frac{a-\overline{ir}}N \le \underbrace{i+\frac{a-\overline{ir}}N, i+\frac{a+(a-\overline{ir})}N, \dots, i+\frac{a \cdot (q-1) + (a-\overline{ir})}N}_{q \; \text{many}}  < i+1.
	\]
Furthermore, notice that $i+(a \cdot q + (a - \overline{ir}))/N>i+1$.
Hence, the second case follows.
\end{proof}

\begin{corollary}\label{cor:oscillate}
For $a$ and $N$ such that $\gcd(a,N)=1$, we have  $\alpha_i \in \{q,q+1\}$.
\end{corollary}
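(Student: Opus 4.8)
The plan is to read the corollary directly off Proposition~\ref{prop:delta-i-vals}. That proposition already determines every entry of $\alpha$: for each $i = 0,\dots,a-1$ it gives $\alpha_{i+1} = q+1$ when $1 \le a - \overline{ir} \le r-1$ and $\alpha_{i+1} = q$ when $r \le a - \overline{ir} \le a$, and as $i$ ranges over $0,\dots,a-1$ the index $i+1$ ranges over all of $1,\dots,a$. So the only thing left to check is that these two ranges are exhaustive, i.e.\ that $a - \overline{ir}$ always lies in $\{1,\dots,r-1\}\cup\{r,\dots,a\} = \{1,\dots,a\}$. Since $\overline{ir}$ is by definition the smallest nonnegative residue modulo $a$, we have $0 \le \overline{ir} \le a-1$, hence $1 \le a - \overline{ir} \le a$, which is precisely that range. (If $r = 0$ the first interval is empty, but then $\gcd(a,N) = \gcd(a,aq) = a = 1$ and the claim is vacuous.) Therefore $\alpha_{i+1} \in \{q, q+1\}$ for every admissible $i$, that is, $\alpha_i \in \{q, q+1\}$ for all $i$, which is the assertion. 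Since the corollary is essentially a repackaging of the preceding proposition, I do not expect any real obstacle; the only decision is whether to invoke Proposition~\ref{prop:delta-i-vals} or argue from scratch.

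If one prefers a self-contained argument, here is its shape. Because $\gcd(a,N)=1$, the condition $N\nmid ak$ is automatic for $k\in\{1,\dots,N-1\}$, so $\alpha_i = \#\{k : 1\le k\le N-1,\ \ceil{ak/N}=i\}$, and $\ceil{ak/N}=i$ is equivalent to $(i-1)N/a < k \le iN/a$. Thus $\alpha_i$ never exceeds the number of integers in a half-open interval of length $N/a$, which is $\floor{N/a}=q$ or $\ceil{N/a}=q+1$ (note $N/a\notin\ZZ$ since $\gcd(a,N)=1$ and $a\ge 2$); for the interior indices no trimming of the interval against $\{1,\dots,N-1\}$ occurs, while the two extreme entries can be pinned to $q$ by a direct count of the fibers over $i=1$ and $i=a$, or by the palindromicity of $B(S;z)/z$ from Proposition~\ref{prop:boxpalindromic}. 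Either route gives $\alpha_i\in\{q,q+1\}$ for all $i$, and the only mild care needed in the second route is bookkeeping the endpoint $k=N$ and the extreme entries.
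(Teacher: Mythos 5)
Your primary route — deriving the corollary from Proposition~\ref{prop:delta-i-vals} by noting that $0 \le \overline{ir} \le a-1$, so $1 \le a - \overline{ir} \le a$ and the two cases of that proposition are exhaustive — is precisely the paper's (implicit) argument; the corollary is stated there with no further proof. One minor inaccuracy in your parenthetical: when $r=0$ and $\gcd(a,N)=1$ one does get $a=1$, but in that degenerate case the claim is not vacuous — it actually fails, since $\alpha_1 = N-1 = q-1$; both Proposition~\ref{prop:delta-i-vals} and the corollary silently assume $a\ge 2$. Your self-contained interval-counting sketch is also correct for $a\ge 2$ and is a reasonable elementary alternative to invoking the proposition.
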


The coefficients of the local \(h^*\)-polynomial are a simple shift of the $\alpha$-vector.  Corollary~\ref{cor:oscillate} implies that the local \(h^*\)-polynomial coefficients are unimodal precisely when the values of $q+1$ occur in a consecutive sequence in the vector.
The following theorem characterizes when this occurs.

\begin{theorem}\label{thm:delta-unimodal-a-prime}
Given $a$ and $N$ with $N=aq+r$, the $\alpha$-vector is unimodal if and only if $r \in \{0, 1, 2, a-1\}$.
Furthermore, the $\alpha$-vector is constant if and only if $r\in\{0,1\}$.
\end{theorem}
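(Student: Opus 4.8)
The plan is to reduce the theorem, via Corollary~\ref{cor:oscillate}, to understanding a single set of indices, and then to run a short arithmetic‑progression estimate. Since $\alpha_i\in\{q,q+1\}$ for every $i$, the vector $\alpha$ is constant precisely when no entry equals $q+1$, and it is unimodal precisely when the ``elevated set'' $Q\coloneqq\{i\in\{0,\dots,a-1\}:\alpha_{i+1}=q+1\}$ is a (possibly empty) block of consecutive integers, using the elementary fact that a two‑valued sequence $\alpha_1,\dots,\alpha_a$, indexed linearly, is unimodal if and only if the positions attaining the larger value form a contiguous interval. Before anything else I would dispose of the trivial case $a=1$, where $\alpha$ has a single entry, so it is both constant and unimodal and $r=0$, matching the statement; so from now on assume $a\ge 2$, in which case $\gcd(a,N)=\gcd(a,r)=1$ forces $r\ge 1$ and the value $r=0$ never occurs.

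Next I would extract from Proposition~\ref{prop:delta-i-vals} a clean description of $Q$. For $i\ne 0$ we have $\overline{ir}\ne 0$ (as $\gcd(a,r)=1$), so $a-\overline{ir}=\overline{-ir}$ is the residue of $-ir$ in $\{1,\dots,a-1\}$, and the proposition (together with Corollary~\ref{cor:oscillate}) gives $\alpha_{i+1}=q+1$ if and only if $\overline{-ir}\in\{1,\dots,r-1\}$; taking $i=0$ and $i=a-1$ in the proposition gives $\alpha_1=\alpha_a=q$ (the latter also follows from palindromicity, Proposition~\ref{prop:boxpalindromic}). Since $\gcd(a,r)=1$, the map $i\mapsto\overline{-ir}$ is a bijection of $\{0,\dots,a-1\}$, so exactly $r-1$ indices satisfy $\alpha_{i+1}=q+1$; hence $|Q|=r-1$, and $Q\subseteq\{1,\dots,a-2\}$ because $\alpha_1=\alpha_a=q$. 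This already settles the ``constant'' clause: $\alpha$ cannot be the constant vector $q+1$ (that would need $r-1=a$), so $\alpha$ is constant if and only if $Q=\emptyset$, i.e.\ $r-1=0$, i.e.\ $r\in\{0,1\}$.

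For the unimodality characterization, the ``if'' direction I would check directly: $r=1$ gives $Q=\emptyset$ and $\alpha$ constant; $r=2$ gives $|Q|=1$, and a two‑valued vector with a single elevated entry is unimodal; and $r=a-1$ makes $r\equiv-1\pmod a$, so $\overline{-ir}=i$ for $0\le i\le a-1$, whence $Q=\{1,\dots,a-2\}$ and $\alpha=(q,q+1,\dots,q+1,q)$ is unimodal. The ``only if'' direction is the heart of the argument. Suppose $\alpha$ is unimodal with $r\ge 3$; then $Q$ is a block of $r-1\ge 2$ consecutive integers, say $Q=\{i_0,i_0+1,\dots,i_0+r-2\}\subseteq\{1,\dots,a-2\}$. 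For any $i$ with $i,i+1\in Q$ one has $\overline{-(i+1)r}\equiv\overline{-ir}-r\pmod a$, and since $\overline{-ir}\in\{1,\dots,r-1\}$ the reduction wraps, giving $\overline{-(i+1)r}=\overline{-ir}+(a-r)$, which indeed lies in $\{0,\dots,a-1\}$ because $\overline{-ir}\le r-1$ and $1\le a-r$. Iterating along the block yields $\overline{-(i_0+j)r}=\overline{-i_0r}+j(a-r)$ for $j=0,\dots,r-2$, and since all the indices $i_0,\dots,i_0+r-2$ lie in $Q$ each of these values lies in $\{1,\dots,r-1\}$; in particular $1+(r-2)(a-r)\le\overline{-i_0r}+(r-2)(a-r)\le r-1$, so $(r-2)(a-r)\le r-2$. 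As $r-2\ge 1$, this forces $a-r\le 1$, hence $r=a-1$ (since $1\le r\le a-1$). Combining the two directions, $\alpha$ is unimodal if and only if $r\in\{0,1,2,a-1\}$.

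The step I expect to demand the most care is the ``only if'' estimate, specifically the justification that $Q$ is made of genuinely consecutive \emph{integers} rather than indices wrapping around modulo $a$ (which is exactly why pinning down $\alpha_1=\alpha_a=q$, hence $Q\subseteq\{1,\dots,a-2\}$, comes first), and the verification that the recursion $\overline{-(i+1)r}=\overline{-ir}+(a-r)$ keeps its right‑hand side inside $\{0,\dots,a-1\}$ at each step. A priori one might fear the characterization requires delicate equidistribution input (a three‑gap–theorem type analysis of the multiples of $r$ mod $a$); it is worth emphasizing that combining the block structure of a unimodal two‑valued vector with the symmetry $\alpha_1=\alpha_a=q$ collapses this to the one‑line inequality above, and everything else is routine bookkeeping with Proposition~\ref{prop:delta-i-vals} and Corollary~\ref{cor:oscillate}.
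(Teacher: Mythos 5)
Your proof is correct, and it rests on the same pillars as the paper's: reading off from Proposition~\ref{prop:delta-i-vals} and Corollary~\ref{cor:oscillate} that exactly $r-1$ entries are elevated to $q+1$, that $\alpha_1=\alpha_a=q$, and that unimodality of a two-valued vector is equivalent to the elevated indices forming one contiguous block. The ``if'' direction and the ``constant'' clause in your write-up are essentially identical to the paper's. Where you genuinely diverge is the ``only if'' direction. The paper splits into two cases ($r>a/2$ versus $r<a/2$), tracks the sequence $a - \overline{ir}$ from its start, and explicitly exhibits a $q$ lodged between two $q+1$'s (invoking the palindromic symmetry in the second case). You instead pick an arbitrary contiguous block $Q=\{i_0,\dots,i_0+r-2\}$ of elevated indices, iterate the wraparound step $\overline{-(i+1)r} = \overline{-ir}+(a-r)$ across the whole block to get $\overline{-(i_0+j)r} = \overline{-i_0 r}+j(a-r)$ for $j=0,\dots,r-2$, and then read off the inequality $(r-2)(a-r)\le r-2$ from the upper bound at $j=r-2$, which forces $a-r\le 1$. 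This replaces the paper's case analysis and positional bookkeeping with a single arithmetic-progression estimate; it is a bit tighter and more uniform, though the paper's version has the advantage of exhibiting the failure of unimodality concretely. Both are valid elementary arguments, and the key inputs (the $\pm r$ recursion modulo $a$, the count $|Q|=r-1$, the constraint $\alpha_1=\alpha_a=q$) are shared.
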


\begin{proof}
Since $N$ and $a$ are coprime, so are $a$ and $r$, thus $\{\overline{ir}|i \in [a-1]\} = [a-1]$.
Thus, the $\alpha$-vector has exactly $r-1$ entries equal to $q+1$.
It is enough to show that these $r-1$ entries equal to $q+1$ cannot all be consecutive.
Equivalently, that the values of $a- \overline{ir}$ between $1$ and $r-1$ cannot be consecutive in the sequence \begin{equation}\label{eq:ir}
a, a- \overline{r}, a- \overline{2r}, \dots, a- \overline{(a-1)r}.
\end{equation}
We note that an element of this sequence is obtained from the previous one by subtracting $r$ if the result of the subtraction is positive and by adding $a-r$ otherwise.

To prove the ``if'' part of the theorem, we note that the $\alpha$-vector in the case $r=0$ is trivial.
In the case $r=1$ by the above formula all entries are equal to $q$.
In the case $r=2$ it has exactly one $q+1$ entry, not the first, and thus in all these cases the $\alpha$-vector is unimodal.
For $r=a-1$, the sequence~\eqref{eq:ir} is simply $a, 1, 2, \dots, a-1$, so there are exactly two $q$ entries in the $\alpha$-vector, the first and the last, making the vector unimodal.

Now for the ``only if'' direction: let $3 \leq r \leq a-2$.
There are $r-1$ entries equal to $q+1$, so at least two and at most $a-3$ (i.e., at least three $q$ entries).
It is enough to show that these $r-1$ entries equal to $q+1$ cannot all be consecutive.
That is, in the sequence~\eqref{eq:ir} the values between $1$ and $r-1$ cannot all be consecutive.
We distinguish two cases: 

	\begin{itemize}
		\item If $r > a-r$ (i.e. $r> \frac{a}2$); suppose $a=k(a-r) +t$, with $0 \le t < a-r$.
			Then the sequence (\ref{eq:ir}) begins with $a, a-r, \dots, k(a-r)$, where the second entry already corresponds to a $q+1$ entry in the $\alpha$-vector, since $a-r < r$.
			Since $r>\frac a2$ and $a-r \ge 2$, it follows that $r \cdot (a-r) >a$, and thus $k \le r-1$.
			Furthermore, notice that $k \cdot (a-r) = a - t > r$.
			We conclude that $\alpha_2=q+1$, $\alpha_k=q$, and $k \le r-1$.
			Hence, there exists $i>k$ with $\alpha_i=q+1$, which means that the sequence is not unimodal.
			
		
		\bigskip
		\item If $r < a-r$ (i.e., $r<\frac a2$), we proceed similarly.
			Let us write $a= \ell \cdot r + s$ with $0 \le s <r$.
			Then the sequence~\eqref{eq:ir} begins with $a, a-r, \dots, a-\ell r$, where the last entry satisfies $a-\ell r=s<r$, i.e., $\alpha_\ell=q+1$.
			In other words, the sequence~\eqref{eq:ir} starts with consecutive $q$'s followed by one or more $(q+1)$'s.
			Notice that $(\ell+1)r > \ell r +s=a$, so that $\overline{(\ell+1)r}=(\ell+1)r-a = r - s$.
			Hence, $a-\overline{(\ell+1)r}=a-r+s$.
			Since $r<\frac a2$, it follows that $a-\overline{(\ell+1)r}=a-r+s>\frac a2>r$, i.e., $\alpha_{\ell+1}=q$.
			Together with the symmetry $\alpha_i=\alpha_{a+1-i}$ for all $i=1,\dots,a$, we conclude this sequence is not unimodal.
	\end{itemize}
\end{proof}

Using our result for the case where $\gcd(a,N)=1$, we can extend our result to the general setting, which implies Theorem~\ref{thm:allonesboxpoly}.

\begin{corollary}\label{cor:allonescharacteriation}
    Let $a, N \in \NN$ and let $b = \gcd(a,N) \ne 1$.
    Divide $N$ by $a$ with remainder, i.e., write $N = aq + r$ for some $q,r \in \NN$ with $0 \le r <a$.
    Then the $\alpha$-vector is unimodal if and only if $r = 0$ or $r = b$.
\end{corollary}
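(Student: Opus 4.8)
The plan is to reduce the statement to the coprime case handled in Theorem~\ref{thm:delta-unimodal-a-prime} by exhibiting the $\alpha$-vector of the pair $(a,N)$ as several consecutive copies of a smaller $\alpha$-vector. Write $a=ba'$ and $N=bN'$, so that $\gcd(a',N')=1$. Since $ak/N=a'k/N'$ we have $\ceil{ak/N}=\ceil{a'k/N'}$ for every $k$, and $N\mid ak$ if and only if $N'\mid k$. Let $\alpha'=(\alpha'_1,\dots,\alpha'_{a'})$ be the analogous vector for the coprime pair $(a',N')$, namely $\alpha'_s=\#\{k=1,\dots,N'-1:\ceil{a'k/N'}=s\}$ (no divisibility condition is needed since $\gcd(a',N')=1$). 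The first step is to prove that $\alpha_{ta'+s}=\alpha'_s$ for all $t=0,\dots,b-1$ and $s=1,\dots,a'$; that is, the $\alpha$-vector of $(a,N)$ is the $b$-fold concatenation $(\alpha',\alpha',\dots,\alpha')$. To do this, write each $k\in\{1,\dots,N-1\}$ with $N'\nmid k$ uniquely as $k=j+mN'$ with $j\in\{1,\dots,N'-1\}$ and $m\in\{0,\dots,b-1\}$; because $\gcd(a',N')=1$ and $0<j<N'$, the number $a'j/N'$ is not an integer, so $\ceil{a'k/N'}=\ceil{a'j/N'}+a'm$. As $\ceil{a'j/N'}\in\{1,\dots,a'\}$, the equation $\ceil{a'j/N'}+a'm=ta'+s$ with $1\le s\le a'$ forces $m=t$ and $\ceil{a'j/N'}=s$, and counting the admissible $j$ yields the claimed identity.

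Next, dividing $N'=a'q+r'$ with $0\le r'<a'$ and comparing with $N=aq+r$, uniqueness of division with remainder shows the quotient is the same $q$ and that $r=br'$; in particular $b\mid r$, and $r=0$ precisely when $a'=1$ (equivalently $b=a$). Now split into cases. If $r=0$ then $a'=1$ and $N'=q$, so $\alpha'=(q-1)$ and $\alpha=(q-1,\dots,q-1)$ is constant, hence unimodal. If $r=b$ then $r'=1$; applying the counting argument from the proof of Theorem~\ref{thm:delta-unimodal-a-prime} to the coprime pair $(a',N')$ shows $\alpha'$ has $r'-1=0$ entries equal to $q+1$, i.e.\ $\alpha'=(q,\dots,q)$, so again $\alpha$ is constant and unimodal.

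Finally, suppose $r\notin\{0,b\}$; since $b\ne 1$ there are at least two copies. From $r\ne 0$ we get $\gcd(a,r)=\gcd(a,N)=b$, hence $\gcd(a',r')=1$, and $r\ne b$ gives $r'\ge 2$, which forces $a'\ge 3$. By the proof of Theorem~\ref{thm:delta-unimodal-a-prime} applied to $(a',N')$, the vector $\alpha'$ has exactly $r'-1\ge 1$ entries equal to $q+1$ and the rest equal to $q$, with $\alpha'_1=q$ (Proposition~\ref{prop:delta-i-vals} with $i=0$, since $1\le a'-\overline{0}=a'\le r'-1$ is impossible) and, by the symmetry $\alpha'_i=\alpha'_{a'+1-i}$, also $\alpha'_{a'}=q$. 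Consequently, in $\alpha=(\alpha',\alpha',\dots)$ there is an index $i_1<a'$ with $\alpha_{i_1}=q+1$ (inside the first copy) and an index $i_2>a'+1$ with $\alpha_{i_2}=q+1$ (inside the second copy), while $\alpha_{a'}=\alpha_{a'+1}=q$. If $\alpha$ were unimodal with peak index $p$, then $p\ge a'$ would force $\alpha_{i_1}\le\alpha_{a'}$, i.e.\ $q+1\le q$; hence $p<a'$, but then $p<a'+1<i_2$ forces $\alpha_{a'+1}\ge\alpha_{i_2}$, i.e.\ $q\ge q+1$. Either way a contradiction, so $\alpha$ is not unimodal. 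I expect the main obstacle to be the first step: one must verify carefully that the ``carry'' decomposition $k=j+mN'$ interacts correctly with the ceiling function so that only the diagonal contribution $m=t$ survives, while also checking that the degenerate sub-case $a'=1$ is consistent with the coprime results being invoked.
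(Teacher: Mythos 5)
Your proof is correct and follows the same approach as the paper: factor out $b=\gcd(a,N)$, observe that $\alpha$ is the $b$-fold concatenation of the $\alpha'$-vector for the coprime pair $(a',N')$, and reduce to determining when $\alpha'$ is constant via Theorem~\ref{thm:delta-unimodal-a-prime}. You supply more explicit justification (the decomposition $k=j+mN'$, the degenerate case $a'=1$, and the direct verification that a non-constant block cannot concatenate to a unimodal vector) than the paper's proof, which asserts these steps without detail.
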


\begin{proof}
Let $a = a' \cdot b$ and $N = N' \cdot b$ for some $a', N' \in \NN$.
Firstly, observe that $\alpha$ is a $b$-fold concatenation of the vector $\alpha'$ that corresponds to the numbers $a'$ and $N'$.
Therefore, $\alpha$ is unimodal if and only if $\alpha'$ is constant, and hence it remains to check when this is the case.
    
Let us divide $N'$ by $a'$ with remainder, i.e., $N' = a' \cdot q' + r'$ for some $q', r' \in \NN$ with $0 \le r' <a'$.
By Theorem~\ref{thm:delta-unimodal-a-prime}, $\alpha'$ is constant if and only if $r' \in \{0,1\}$.
With $N = b \cdot N' = q' \cdot (a' b) + r' \cdot b = q' \cdot a + (r' \cdot b)$, the statement follows.
\end{proof}

\subsection{Non-trivial row \texorpdfstring{$\mleft(q^{k-1}, \dots, q,1,q^k\mright)$}- }\label{sec:geometric}

In this subsection, let $k \in \ZZ$ be a fixed positive integer and $q \in \ZZ_{\geq 2}$. 
Consider the one-row Hermite normal form simplex \(S\) with final row $\mleft(q^{k-1}, \dots, q,1,q^k\mright)$.
We study this as a test case for one-row Hermite normal form simplices where the final row has ``well-spaced'' entries.

To begin, in the following proposition we show that any \(S\) of this form does not have the Integer Decomposition Property (IDP).
Recall that a simplex \(S\) has the IDP if every lattice point in the non-negative cone generated by the colums of \(A\) is a sum of lattice points in the cone having last coordinate equal to \(1\).
It is known that polytopes with a unimodular triangulation have the IDP, and thus these simplices do not have a regular unimodular triangulation.
This is noteworthy because, as we will see in Theorem~\ref{thm:geometricunimodalbox} below, the local \(h^*\)-polynomial of every such \(S\) is unimodal, as illustrated in Figure~\ref{fig:q3_k12_geometricdist}.
This family of simplices demonstrates that the existence of regular unimodular triangulations is not the only source of unimodality for local \(h^*\)-polynomials of one-row Hermite normal form simplices.

\begin{proposition}
The simplex \(S\) does not have the IDP.
\end{proposition}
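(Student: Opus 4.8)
The plan is to produce one lattice point in a dilate of $S$ that cannot be written as a sum of lattice points of $S$, using the final ($d$-th) coordinate as the obstruction. Write $d=k+1$ and $N=q^k$, and take the vertices of $S$ to be $v_0=0$, $v_i=e_i$ for $1\le i\le d-1$, and $v_d=(a_1,\dots,a_{d-1},N)$ with $a_i=q^{k-i}$; then $1\le a_i<N$, and $c:=-1+\sum_{i=1}^{d-1}a_i=q+q^2+\cdots+q^{k-1}$ satisfies $0\le c<N$, since $c/N=\sum_{m=1}^{k-1}q^{-m}<1$. I will establish two claims: (a) the all-ones vector $\mathbf 1=(1,\dots,1)\in\ZZ^d$ lies in $kS$; and (b) $S$ contains no lattice point whose final coordinate equals $1$. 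Granting these, if $S$ had the IDP then $\mathbf 1\in kS\cap\ZZ^d$ would be a sum $\mathbf 1=p_1+\cdots+p_k$ with each $p_j\in S\cap\ZZ^d$; comparing final coordinates, the nonnegative integers $(p_j)_d$ sum to $1$, so some $p_j$ has final coordinate $1$, contradicting (b). (This argument uses $k\ge 2$; for $k=1$ the simplex is a lattice triangle and is IDP.)

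For (a), I would use the parametrization of the parallelepiped group $\Gamma$ recorded earlier. Its generator $g=\bigl(\tfrac cN,-\tfrac{a_1}N,\dots,-\tfrac{a_{d-1}}N,\tfrac1N\bigr)$ reduces modulo $\ZZ^{d+1}$ to the vector with coordinates $\tfrac cN,\tfrac{N-a_1}N,\dots,\tfrac{N-a_{d-1}}N,\tfrac1N$, whose sum is $\tfrac{c+(d-1)N-\sum_i a_i+1}N=\tfrac{kN}N=k$; hence $\age(g)=k$. Applying the bijection given by multiplication by $A$ to this representative produces the lattice point $\tfrac cN r_0+\sum_{i=1}^{d-1}\tfrac{N-a_i}N r_i+\tfrac1N r_d$, whose $i$-th coordinate for $1\le i\le d-1$ is $\tfrac{N-a_i}N+\tfrac{a_i}N=1$ and whose final coordinate is $\tfrac1N\cdot N=1$; thus this point is $(k,1,\dots,1)$, which witnesses $\mathbf 1\in kS$. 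One can also just check directly that $\tfrac1k\mathbf 1=\tfrac c{kN}v_0+\sum_{i=1}^{d-1}\tfrac{N-a_i}{kN}v_i+\tfrac1{kN}v_d$ is a convex combination of the vertices.

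For (b), suppose $p\in S\cap\ZZ^d$ has $p_d=1$ and write $p=\sum_{j=0}^d\lambda_j v_j$ with $\lambda_j\ge 0$, $\sum_j\lambda_j=1$. Because $v_0,\dots,v_{d-1}$ have final coordinate $0$ while $v_d$ has final coordinate $N$, we get $\lambda_d=\tfrac1N$. For $1\le i\le d-1$ the $i$-th coordinate of $p$ equals $\lambda_i+\lambda_d a_i=\lambda_i+\tfrac{a_i}N$ with $0\le\lambda_i\le 1-\lambda_d=1-\tfrac1N$, so it lies strictly between $0$ and $2$ (using $1\le a_i<N$); being an integer, it equals $1$, which forces $\lambda_i=\tfrac{N-a_i}N$. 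Then $\lambda_0=1-\lambda_d-\sum_{i=1}^{d-1}\lambda_i=1-\tfrac1N-\tfrac{(d-1)N-\sum_i a_i}N=\tfrac{c-(k-1)N}N<0$, since $c<N\le(k-1)N$; this contradicts $\lambda_0\ge 0$ and proves (b).

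I do not expect a serious obstacle: the argument rests on two choices, tracking the final coordinate and testing the all-ones point $\mathbf 1$ (which is forced on us as the image under $A$ of the generator of $\Gamma$). The step needing the most care is (b), where one must rule out a lattice point of $S$ with final coordinate $1$ directly from barycentric coordinates rather than by enumerating all lattice points of $S$, and one must keep in mind the hypothesis $k\ge 2$.
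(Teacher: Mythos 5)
Your proposal is correct, and it takes a genuinely different route from the paper's. The paper constructs the specific point $b=(q^{k-1},\dots,q,1,q^k-1)\in 2S$ and shows by a case analysis that it cannot be a sum of two lattice points of $S$: one summand must have vanishing $k$-th coordinate, which forces two of its barycentric coordinates to vanish, and then the constraints on the other summand produce a contradiction. You instead look at the all-ones vector $\mathbf 1\in kS$ (nicely motivated as the image under $A$ of the generator of the parallelepiped group, though the direct barycentric verification you also give suffices) and prove the stronger structural statement that $S$ has no lattice point at all whose last coordinate equals $1$; the contradiction is then immediate, since decomposing $\mathbf 1$ into $k$ lattice points of $S$ forces exactly one of them to have last coordinate $1$. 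Two remarks on what each approach buys. Your proof is shorter and avoids the case analysis, and it isolates a cleaner obstruction. More significantly, your computation in step (b) uses only $1\le a_i<N$ and $c=-1+\sum_i a_i<N$, together with $d\ge3$, so it actually establishes non-IDP for any one-row Hermite normal form simplex with $\sum_i a_i\le N$ in dimension at least three, not just the geometric-sequence family — a genuine generalization the paper's argument does not give. The paper's proof, by contrast, exhibits failure already at the second dilate, which is a slightly sharper piece of information about where IDP breaks. You also correctly flag that the argument requires $k\ge2$; the paper's proof has the same implicit restriction (its opening inequality $1<\tfrac1q+\dots+\tfrac1{q^k}+(1-\tfrac1{q^k})$ fails at $k=1$), so this is not a gap relative to the paper.
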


\begin{proof}
We begin by observing that
	\[
		1< \frac1q + \dots + \frac1{q^k} + \mleft( 1 - \frac1{q^k} \mright) <2.
	\]
Let \(\lambda \in (0,1)\) such that \(\lambda + \sum_{i=1}^k \frac1{q^i} + \mleft( 1 - \frac1{q^k} \mright) = 2\), so that the convex combination
	\[
		b = (q^{k-1}, \dots, q, 1, q^k - 1) = \lambda v_0 + \sum_{i=1}^k \frac1{q^i} v_i + \mleft( 1 - \frac1{q^k} \mright) v_{k+1} \in 2S.
	\]
Assume towards a contradiction that \(b = b' + b''\) for two elements \(b', b'' \in S \cap \ZZ^{k+1}\).
	Let us write \(b' = (b'_0, \dots, b'_{k+1})\) and \(b'' = (b''_0, \dots, b''_{k+1})\) for \(b'_i, b''_j \in \ZZ_{\ge0}\).
Then the \(k\)-th coordinate of \(b'\) or \(b''\) has to vanish, say \(b'_k = 0\).
We can write \(b'=\sum_{i=0}^{k+1} \mu'_i v_i\), with \(\mu'_i \in [0,1]\) such that \(\sum_{i=0}^{k+1} \mu'_i=1\).
Since \(0 = \mu'_{k+1} + \mu'_k\), it follows that \(\mu'_{k+1} = \mu'_k = 0\).
Thus, \(b''_{k+1} = q^k - 1\).

Let us write \(b'' = \sum_{i=0}^{k+1} \mu''_i v_i\) for \(\mu''_i \in [0,1]\) with \(\sum_{i=0}^{k+1} \mu''_i = 1\).
Since \(b\) is in the interior of \(2S\) and \(b'\) is on the boundary of \(S\), it follows that \(b''\) is in the interior of \(S\), or in other words \(\mu''_i \in (0,1)\).
From \(b''_{k+1} = q^k - 1\) we conclude that \(\mu'_{k+1} = \frac{q^k - 1}{q^k}\).
Observe that the entry $b''_{k+1}$ completely determines the vector $b''$.
Indeed, for \(i = 1, \dots, k\), since \(b''_i = \mu''_{k+1} q^{k-i} + \mu''_i\) is an integer and $\mu''_i \in (0,1)$, we have that \(\mu''_i = \frac1{q^i}\), which also fixes \(b''_i\).
Note that \(\sum_{i=1}^{k+1} \mu''_i > 1\) - a contradiction.
Hence, the element \(b \in 2S \cap \ZZ^{k+1}\) is not a sum of two elements in \(S \cap \ZZ^{k+1}\).
\end{proof}

\begin{theorem}\label{thm:geometricunimodalbox}
    For any integers \(q\geq 2\) and \(k\geq 2\), the simplex \(S\) with non-trivial row \[\mleft(1, q^{k-1}, \dots, q,1,q^k\mright)\] has a unimodal local \(h^*\)-polynomial.
\end{theorem}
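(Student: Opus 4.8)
The plan is to make the local $h^*$-polynomial of $S$ explicit enough to see the shape of its coefficient vector, and then deduce unimodality from a general fact about sliding-window sums of symmetric unimodal sequences. Throughout write $N=q^k$ and $a_i=q^{k-i}$ for $i=1,\dots,k$ (so $d=k+1$), and put $c\coloneqq -1+\sum_{i=1}^k a_i = q\,m$ with $m\coloneqq 1+q+\cdots+q^{k-2}=\tfrac{q^{k-1}-1}{q-1}$; since $m\equiv 1\pmod q$ we get $\gcd(c,q^k)=q$. Using the parameterization of $\Gamma$ from just before Theorem~\ref{thm:boxhstarconditions}, the element indexed by $\ell\in\{1,\dots,q^k-1\}$ is $g_\ell=\bigl(\fractional{\ell c/q^k},\fractional{-\ell/q},\dots,\fractional{-\ell/q^k},\fractional{\ell/q^k}\bigr)$, because $-\ell a_i/q^k=-\ell/q^i$. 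From $\gcd(c,q^k)=q$ one obtains $q^k\mid \ell c \iff q^{k-1}\mid \ell$, so for $k\ge 2$ the point $g_\ell$ has all coordinates non-integral exactly when $q\nmid \ell$, and these are precisely the nonzero elements of $\Gamma$ lying in $(0,1)^{d+1}$. Hence $B(S;z)=\sum_{1\le\ell\le q^k-1,\ q\nmid\ell} z^{\age(g_\ell)}$.

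\emph{A closed form for the age.} Combining the last two coordinates of $g_\ell$ via $\fractional{-\ell/q^k}+\fractional{\ell/q^k}=1$, rewriting $\fractional{-\ell/q^i}=1-(\ell\bmod q^i)/q^i$, and summing the resulting geometric series gives $\age(g_\ell)=k+\fractional{\ell c/q^k}-\tfrac{\sigma}{q-1}+\tfrac{\ell'}{(q-1)q^{k-1}}$, where $\ell'\coloneqq \ell\bmod q^{k-1}$ and $\sigma$ is the base-$q$ digit sum of $\ell'$. Since $c=qm$ one checks $\fractional{\ell c/q^k}=\fractional{\ell' m/q^{k-1}}=\fractional{\tfrac{\ell'}{q-1}-\tfrac{\ell'}{(q-1)q^{k-1}}}$, and resolving this fractional part — using $0<\tfrac{\ell'}{(q-1)q^{k-1}}<\tfrac1{q-1}$ and $\ell'\equiv\sigma\pmod{q-1}$, together with the floor/ceiling lemmas of Section~\ref{sec:floor} — collapses the formula to
\[
\age(g_\ell)=\begin{cases} k+1-\dfrac{\sigma}{q-1}, & (q-1)\mid\sigma,\\[2mm] k-\floor{\dfrac{\sigma}{q-1}}, & (q-1)\nmid\sigma.\end{cases}
\]
In particular $\age(g_\ell)$ depends only on $\ell'$, and $\age(g_\ell)=i$ holds exactly when $\sigma$ lies in the block $W_i\coloneqq\{(q-1)(k-i)+1,\dots,(q-1)(k+1-i)\}$ of $q-1$ consecutive integers.

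\emph{Window sums and unimodality.} Every admissible $\ell$ is uniquely $\ell=\ell_{k-1}q^{k-1}+\ell'$ with $\ell'\in\{1,\dots,q^{k-1}-1\}$, $q\nmid\ell'$, and $\ell_{k-1}\in\{0,\dots,q-1\}$ arbitrary, so $b_i=q\sum_{s\in W_i}M(s)$ where $M(s)$ is the number of length-$(k-1)$ base-$q$ strings with nonzero leading digit and digit sum $s$, i.e. $M(s)=[x^s]\bigl((x+x^2+\cdots+x^{q-1})(1+x+\cdots+x^{q-1})^{k-2}\bigr)$ (and $M(s)=0$ otherwise). That polynomial is a product of palindromic unimodal polynomials with nonnegative coefficients, hence palindromic and unimodal, so $(M(s))_s$ is a symmetric unimodal sequence, symmetric about some center $\mu$ (compatibly with the palindromicity of $B(S;z)/z$ from Proposition~\ref{prop:boxpalindromic}). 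For the width-$(q-1)$ sliding-window sum $f(a)\coloneqq\sum_{s=a}^{a+q-2}M(s)$ we have $f(a+1)-f(a)=M(a+q-1)-M(a)$, which by symmetry and unimodality of $M$ is nonnegative for $a+\tfrac{q-1}{2}\le\mu$ and nonpositive for $a+\tfrac{q-1}{2}\ge\mu$; hence $f$ is unimodal. Since $b_i=q\,f\bigl((q-1)(k-i)+1\bigr)$ and the arguments $(q-1)(k-i)+1$ form an arithmetic progression strictly decreasing in $i$, the coefficient sequence $(b_i)$ is a uniformly spaced sample of a unimodal sequence, hence unimodal; this proves the theorem.

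\emph{Main obstacle.} The only genuinely nontrivial step is the closed form for $\age(g_\ell)$: one must control $\fractional{\ell c/q^k}$ carefully and see how its two regimes dovetail with the digit-sum term so that, remarkably, the age depends on $\ell$ only through the base-$q$ digit sum of $\ell\bmod q^{k-1}$. Everything afterward is bookkeeping plus the standard fact that products of symmetric unimodal polynomials with nonnegative coefficients are again symmetric and unimodal.
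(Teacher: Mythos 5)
Your proof is correct and follows essentially the same route as the paper's: reduce to $\ell' = \ell \bmod q^{k-1}$, express the age in terms of the base-$q$ digit sum $\sigma$ of $\ell'$, identify $b_i$ (up to a factor of $q$) with a width-$(q-1)$ window sum of the coefficients of $(t+\cdots+t^{q-1})(1+t+\cdots+t^{q-1})^{k-2}$, and conclude unimodality from the palindromicity and unimodality of that product. The only cosmetic differences are that you use the opposite sign for the generator of $\Gamma$ (so your age formula $k+1-\lceil\sigma/(q-1)\rceil$ is the paper's $1+\lceil\sigma/(q-1)\rceil$ reflected through the center $(k+2)/2$, consistent with Proposition~\ref{prop:boxpalindromic}), and you argue unimodality via the overlapping sliding-window function $f$ rather than the paper's direct comparison of disjoint windows; also, your appeal to ``the floor/ceiling lemmas of Section~\ref{sec:floor}'' is unnecessary (and slightly misleading, since those lemmas concern a different setup)---the fractional-part analysis needed here is the elementary case split on $\ell'\bmod(q-1)$, which you in fact carry out.
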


The remainder of this subsection is devoted to a proof of Theorem~\ref{thm:geometricunimodalbox}.
We begin by computing the parallelepiped group.
Recall that this is done by lifting the vertices \(v_0, v_1, \dots, v_{k+1}\) to height one and using these vectors as the rows of a matrix \(A\).
Let us denote the rows of the inverse matrix \(A^{-1}\) by \(r_1(A), \dots, r_{k+2}(A)\).
Then the parallelepiped group is \(\Gamma = \ZZ^{k+2} + \sum_{i=1}^{k+2} \ZZ r_i(A)\).
In our case, the inverse matrix \(A^{-1}\) is given by:
\[
	A^{-1} = \begin{pmatrix}
		\hphantom{-}1 & 0 & 0 & 0 & \cdots & 0 & 0 & 0\\
		-1 & 1 & 0 & 0 & \cdots & 0 & 0 & 0\\
		-1 & 0 & 1 & 0 & \cdots & 0 & 0 & 0\\
		-1 & 0 & 0 & 1 & \cdots & 0 & 0 & 0\\
		\multicolumn{8}{c}{\cdotfill}\\
		-1 & 0 & 0 & 0 & \cdots & 1 & 0 & 0\\
		-1 & 0 & 0 & 0 & \cdots & 0 & 1 & 0\\
		\frac{q^{k-2}+q^{k-3} + \dots + q + 1}{q^{k-1}} & -\frac1{q} & -\frac1{q^2} & -\frac1{q^3} & \cdots & -\frac1{q^{k-1}} & -\frac1{q^k} & \frac1{q^k}
	\end{pmatrix}.
\]
Hence, the parallelepiped group of \(S\) is given by
\[
	\Gamma = \mleft(\ZZ^{k+2} + \ZZ \mleft((-1) \cdot \frac{q^{k-2}+q^{k-3}+\dots+q+1}{q^{k-1}}, \frac1q, \frac1{q^2}, \dots, \frac1{q^k}, (-1) \cdot \frac1{q^k}\mright)\mright)/\ZZ^{k+2}.
\]
Thus, the coefficients of the local \(h^*\)-polynomial \(B(S;z) = \sum_{i=0}^{k+2}b_iz^i\) satisfy
\[
	b_i = \mleft|\mleft\{x=(x_1,\dots,x_{k+2}) + \ZZ^{k+2} \in \Gamma : \age(x) = i, x_i \not \in \ZZ \; \text{for all}\; i\mright\}\mright|.
\]
It follows that \(\sum_{i=0}^{d}b_i = (q-1) \cdot q^{k-1}\).
To prove unimodality of the local \(h^*\)-polynomial coefficients, it suffices to show that the sequence \((\delta'_1, \delta'_2, \dots, \delta'_{k-1})\) is a unimodal sequence where
\begin{align*}
	\delta'_i &= \mleft|\mleft\{\ell=1, \dots, q^k \colon q \nmid \ell, \sum_{i=1}^{k-1}\fractional{\frac\ell{q^i}} + \fractional{-\sum_{i=1}^{k-1}\frac\ell{q^i}}=i\mright\}\mright|\\
	&= \mleft|\mleft\{ \ell = 1, \dots, q^k, \colon q\nmid \ell, \ceil{\sum_{i=1}^{k-1}\fractional{\frac\ell{q^i}}}=i\mright\}\mright|.
\end{align*}
We can further simplify this by defining the sequence \((\delta_1,\dots,\delta_{k-1})\) where
\[
	\delta_i = \mleft|\mleft\{\ell = 1, \dots, q^{k-1} \colon q \nmid \ell, \smash{\underbrace{\ceil{\sum_{i=1}^{k-1} \fractional{\frac\ell{q^i}}}}_{\eqqcolon\age(\ell)}} = i \vphantom{\sum_{i=1}^{k-1} \fractional{\frac\ell{q^i}}}\mright\}\mright|\vphantom{\underbrace{\ceil{\sum_{i=1}^{k-1} \fractional{\frac\ell{q^i}}}}_{\eqqcolon\age(\ell)}}
\]
and observing that \(\delta'_i = q \cdot \delta_i\), which is obtained by writing \(\ell\) in base \(q\).
Thus, unimodality of the local \(h^*\)-polynomial is equivalent to unimodality of the sequence \(\delta_i\).

Let us next express \(\ell = 1, \dots, q^{k-1}\) in base \(q\), i.e., we write \(\ell = \sum_{i=0}^{k-2}c_i q^i\) for natural numbers \(c_i \in\{ 0, 1, \dots, q-1\}\).
Then \(\age(\ell)\) can be rewritten as follows;
\begin{align*}
	\age(\ell) &= \ceil{\sum_{i=1}^{k-1} \fractional{\frac\ell{q^i}}} = \ceil{\sum_{i=1}^{k-1}\fractional{\frac{\sum_{j=0}^{k-2}c_j q^j}{q^i}}} = \ceil{\sum_{i=1}^{k-1}\fractional{\sum_{j=0}^{i-1}c_j q^{j-i}}}\\
	&= \ceil{\sum_{i=1}^{k-1}\sum_{j=0}^{i-1}c_j q^{j-i}}=\ceil{\sum_{j=0}^{k-2}c_j\sum_{i=1}^{k-1-j}q^{-i}} = \ceil{\sum_{j=0}^{k-2}c_j \frac{1-q^{-k+1+j}}{q-1}}\\
	&= \ceil{\frac{c_0 + c_1 + \dots + c_{k-2}}{q-1} - \frac{\ell}{q^{k-1}(q-1)}} \text{.}
\end{align*}

The following proposition considerably simplifies the computation of \(\age(\ell)\).
\begin{proposition}
	For all \(\ell = 1, \dots, q^{k-1}\), we have that
	\[
		\age(\ell) = \ceil{\frac{c_0 + c_1 + \dots + c_{k-2}}{q-1}},
	\]
	where \(\ell = c_0 + c_1 q + c_2q^2 + \dots + c_{k-2}q^{k-2}\) is the expression of \(\ell\) to base \(q\).
\end{proposition}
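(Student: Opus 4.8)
The plan is to notice that the displayed computation immediately preceding the proposition has already done all of the real work: for $\ell = 1, \dots, q^{k-1}$ with base-$q$ expansion $\ell = \sum_{i=0}^{k-2} c_i q^i$, it reduces the quantity of interest to
\[
\age(\ell) = \ceil{\frac{C}{q-1} - \frac{\ell}{q^{k-1}(q-1)}}, \qquad C \coloneqq c_0 + c_1 + \dots + c_{k-2}.
\]
So all that remains is to show that the ``error term'' $\frac{\ell}{q^{k-1}(q-1)}$ never changes the value of the ceiling, i.e.\ that $\ceil{\frac{C}{q-1} - \frac{\ell}{q^{k-1}(q-1)}} = \ceil{\frac{C}{q-1}}$. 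Note that a base-$q$ expansion $\ell = \sum_{i=0}^{k-2}c_iq^i$ exists exactly for $1 \le \ell \le q^{k-1}-1$; the remaining value $\ell = q^{k-1}$ is divisible by $q$ and never enters the sequences $\delta_i$ used in the sequel, so it can be disregarded.

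First I would record the elementary estimate that for $1 \le \ell \le q^{k-1}-1$ one has $0 < \frac{\ell}{q^{k-1}(q-1)} < \frac{1}{q-1}$, and then isolate the following fact about ceilings of rationals with denominator $q-1$: if $C \in \ZZ$, $q \ge 2$, and $0 < \varepsilon < \frac{1}{q-1}$, then $\ceil{\frac{C}{q-1} - \varepsilon} = \ceil{\frac{C}{q-1}}$. The mechanism is that the open interval with endpoints $\frac{C-1}{q-1}$ and $\frac{C}{q-1}$ contains no integer: if $m \in \ZZ$ with $m < \frac{C}{q-1}$, then $m(q-1) < C$, hence $m(q-1) \le C-1$, hence $m \le \frac{C-1}{q-1}$. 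Since $\frac{C}{q-1} - \varepsilon < \frac{C}{q-1}$, monotonicity of the ceiling gives ``$\le$''; if the inequality were strict there would be an integer $m$ with $\frac{C}{q-1} - \varepsilon \le m < \frac{C}{q-1}$, and then $m > \frac{C}{q-1} - \frac{1}{q-1} = \frac{C-1}{q-1}$, contradicting the previous sentence. Applying this with $\varepsilon = \frac{\ell}{q^{k-1}(q-1)}$ yields the proposition.

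I do not expect a genuine obstacle here: the argument is short and purely arithmetic once one accepts the reduction supplied by the preceding display. The only points needing mild care are keeping the error term strictly below $\frac{1}{q-1}$ — which is precisely why the range $1 \le \ell \le q^{k-1}-1$ is the right one — and the bookkeeping remark that no further manipulation of fractional parts is required. An essentially equivalent alternative would be to clear denominators, writing $C q^{k-1} - \ell = (q-1)D$ for an explicit nonnegative integer $D$ and reducing the claim to $\ceil{D/q^{k-1}} = \ceil{C/(q-1)}$; I would still favor the interval argument as being the cleanest.
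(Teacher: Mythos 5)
Your proposal is correct and takes essentially the same route as the paper: both hinge on the bound \(0 < \ell/(q^{k-1}(q-1)) < 1/(q-1)\) and then argue that a perturbation smaller than \(1/(q-1)\) cannot change \(\ceil{C/(q-1)}\). The only cosmetic difference is that you package this as a single ceiling lemma (no integer lies in the open interval with endpoints \((C-1)/(q-1)\) and \(C/(q-1)\)), whereas the paper splits into cases according to whether \(C/(q-1)\) is an integer; you also rightly note that \(\ell=q^{k-1}\) lacks a \((k-1)\)-digit base-\(q\) expansion but is excluded anyway since \(q\mid\ell\).
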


\begin{proof}
We begin by observing that (recall that \(q\ge2\))

	\[
		\frac{\ell}{q^{k-1}(q-1)} < \frac1{q-1} \le 1.
	\]
	Let us write the fraction \((c_0 + \dots + c_{k-2})/(q-1) = r + \delta/(q-1)\) for some natural numbers \(r, \delta \in \ZZ\) where \(\delta = 0, 1, \dots, q-2\).
We distinguish two cases.

Suppose \(\delta = 0\).
Since \(\ell/(q^{k-1}(q-1)) < 1\), it follows that
	\[
		\ceil{\frac{c_0 + \dots + c_{k-2}}{q-1}} = \ceil{r} = \ceil{r - \smash{\underbrace{\frac{\ell}{q^{k-1}(q-1)}}_{<1}}\vphantom{\frac{\ell}{q^{k-1}(q-1)}}} = \age(\ell). \vphantom{\underbrace{\frac{\ell}{q^{k-1}(q-1)}}_{<1}} \,.
	\]

Next suppose that \(\delta > 0\).
Since \(\ell/(q^{k-1}(q-1))<1/(q-1)\), it follows that
	\[
		\ceil{\frac{c_0 + \dots + c_{k-2}}{q-1}} = \ceil{r + \frac{\delta}{q-1}} = \ceil{r + \frac{\delta}{q-1} - \smash{\underbrace{\frac{\ell}{q^{k-1}(q-1)}}_{<1/(q-1)}}\vphantom{\frac{\ell}{q^{k-1}(q-1)}}} = \age(\ell). \vphantom{\underbrace{\frac{\ell}{q^{k-1}(q-1)}}_{<1/(q-1)}}
	\]
\end{proof}

With the previous proposition, it follows that
\[
	\delta_i = \mleft| \bigsqcup_{j=(i-1)(q-1)+1}^{i(q-1)}\mleft\{ (c_0, \dots, c_{k-2}) \in [1,q-1] \times {[0,q-1]}^{k-2} \cap \ZZ^{k-1} \colon \sum_{m=0}^{k-2} c_m = j \mright\} \mright|.
\]
The number of tuples \((c_0, \dots, c_{k-2}) \in [1, q-1] \times {[0, q-1]}^{k-2} \cap \ZZ^{k-1}\) which sum up to the integer \(j\) coincides with the \(j\)-th coefficient of the following polynomial
\[
	f = (t + t^2 + \dots + t^{q-1}) \cdot {(1 + t + t^2 + \dots + t^{q-1})}^{k-2} = \sum_{m=0}^{(k-1)(q-1)} \alpha_m t^m \in \ZZ[t].
\]
We get \(\delta_i = \sum_{j=(i-1)(q-1)+1}^{i(q-1)} \alpha_j\).
Since the sequence \((\delta_1, \dots, \delta_{k-1})\) is palindromic, it suffices to show that \(\delta_1 \le \delta_2 \le \dots \le \delta_{\floor{(k-1)/2}}\).
Since the polynomial \(f\) is a product of palindromic unimodal polynomials, it is also palindromic and unimodal.
In particular, the coefficients \(\alpha_m\) are monotonically increasing until the middle (i.e., up to the coefficient \(\alpha_{\floor{(k-1)(q-1)/2}}\)).

Since the sequence \((\delta_1,\dots,\delta_{k-1})\) can be identified with disjoint successive partial sums of the sequence \((\alpha_0,\dots,\alpha_{\floor{(k-1)/2}(q-1)})\) which is monotonically increasing as \(\floor{(k-1)/2}(q-1) \le \floor{(k-1)(q-1)/2}\), it follows that \((\delta_1, \dots, \delta_{k-1})\) is unimodal.
This completes the proof of Theorem~\ref{thm:geometricunimodalbox}.

\section{Number Theoretical Results on Floor and Ceiling Functions}\label{sec:floor}

In this section we prove a variety of results regarding floor and ceiling functions that we use in Section~\ref{sec:limits}.

\begin{proposition}\label{prop:floor-fct}
Let \(k,m,n\) be positive integers and set \(\varepsilon\coloneqq \frac1n\).
Then we have
    \[
        \mleft\lfloor \frac{k\cdot i + 1}{k\cdot m + \varepsilon} \mright\rfloor = \mleft\lfloor \frac{k \cdot i + \delta + 1}{k \cdot m + \varepsilon} \mright\rfloor
    \]
    and
    \[
        \mleft\lceil \frac{k\cdot i + 1}{k\cdot m + \varepsilon} \mright\rceil = \mleft\lceil \frac{k \cdot i + \delta + 1}{k \cdot m + \varepsilon} \mright\rceil
    \]
    for all \(i=0,1,\dots, mn-1\) and all \(\delta=0,1,\dots,k-1\).
\end{proposition}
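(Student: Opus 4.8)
The plan is to clear denominators and reduce both identities to a single statement about the floor function on consecutive integers. Writing $\varepsilon=\frac1n$ gives $km+\varepsilon=\frac{kmn+1}{n}$, so with $D\coloneqq kmn+1$ we have $\frac{ki+1}{km+\varepsilon}=\frac{n(ki+1)}{D}$ and $\frac{ki+\delta+1}{km+\varepsilon}=\frac{n(ki+\delta+1)}{D}$. Introduce the non-decreasing function $f\colon\{0,1,\dots,kmn-1\}\to\ZZ_{\ge0}$, $f(j)\coloneqq\floor{n(j+1)/D}$. As $\delta$ runs over $\{0,\dots,k-1\}$ the integer $ki+\delta$ runs over the block $B_i\coloneqq\{ki,ki+1,\dots,ki+k-1\}$, and the blocks $B_0,\dots,B_{mn-1}$ partition $\{0,1,\dots,kmn-1\}$; thus the claimed floor identity is exactly the assertion that $f$ is constant on each $B_i$.

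Since $f$ is non-decreasing, it is constant on every $B_i$ if and only if it does not \emph{jump} at any $j$ with $1\le j\le kmn-1$ and $k\nmid j$, where ``$f$ jumps at $j$'' means $f(j)>f(j-1)$. Using that $\floor{a}<\floor{b}$ (for $a\le b$) holds exactly when some integer $c$ satisfies $a<c\le b$, a jump at $j$ occurs if and only if there is an integer $c$ with $nj<cD\le n(j+1)$, equivalently $cD\in\{nj+1,\dots,nj+n\}$. The key step is to show that the existence of such a $c$ forces $k\mid j$. First, from $1\le cD\le n(j+1)\le kmn^2$ and $D=kmn+1$ one gets $1\le c\le n-1$. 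Next, rewrite $nj<cD\le n(j+1)$ as $ckmn+c-n\le nj\le ckmn+c-1$, so $nj=ckmn+c-t$ for some $t\in\{1,\dots,n\}$; reducing modulo $n$ gives $n\mid(c-t)$, and since $c-t\in\{-(n-1),\dots,n-2\}$ this forces $c=t$. Hence $nj=ckmn$, i.e.\ $j=ckm$, so $k\mid j$, contradicting $k\nmid j$. This establishes the floor identity.

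For the ceiling identity it suffices to observe that none of the numbers $\frac{n(ki+\delta+1)}{D}$ (for $0\le i\le mn-1$ and $0\le\delta\le k-1$) is an integer, so $\ceil{x}=\floor{x}+1$ for every relevant value $x$ and the ceiling identity follows from the floor identity term by term. Non-integrality is immediate: $D=kmn+1\equiv1\pmod n$, so $\gcd(n,D)=1$, while $0<ki+\delta+1\le k(mn-1)+(k-1)+1=kmn<D$, whence $D\nmid n(ki+\delta+1)$.

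I expect the main obstacle to be the modular bookkeeping in the second paragraph: getting the bound $1\le c\le n-1$ exactly right (it relies on $D=kmn+1$ being strictly larger than $kmn$) and then extracting $c=t$ from the congruence. Everything else is routine once the statement is recast as the block-constancy of $f$.
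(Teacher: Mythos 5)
Your proof is correct, and it takes a genuinely different route from the paper's. The paper first isolates the auxiliary Lemma stating that for each $\ell \in \{0,\dots,n-1\}$ the quantity $kx+y+1$ lies strictly between $\ell(km+\varepsilon)$ and $(\ell+1)(km+\varepsilon)$ for all $(x,y)\in[\ell m,(\ell+1)m-1]\times[0,k-1]$; partitioning the $i$-range into the $n$ intervals of length $m$ then shows directly that $(ki+\delta+1)/(km+\varepsilon)\in(\ell,\ell+1)$, so the floor equals $\ell$ and the ceiling equals $\ell+1$. That argument not only proves the identity but also pins down the explicit value of each floor and ceiling. Your argument instead clears denominators to get the integer $D=kmn+1$, recasts the claim as block-constancy of $j\mapsto\lfloor n(j+1)/D\rfloor$ on blocks of size $k$, and shows via a congruence mod $n$ that a jump of this function forces $k\mid j$. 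This is slicker in a couple of respects: the bijection $(i,\delta)\mapsto j=ki+\delta$ cleanly organizes the bookkeeping, the coprimality $\gcd(n,D)=1$ does double duty (bounding $c$ and forcing $c=t$, and also showing no value is an integer), and the ceiling case reduces to the floor case by non-integrality rather than requiring a separate inequality. What you give up is the explicit identification of the floor value as $\ell$, which the paper's rectangle-bound yields for free; for this proposition that value is not needed, but the explicit form of Lemma~\ref{lem:ineq-restr-fct} is reused later (e.g.\ in Corollary~\ref{cor:relation-betw-k-and-one}), so the paper's more constructive version is convenient for the subsequent development.
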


To prove this statement we consider the function
\[
    f \colon \RR^2 \to \RR; (x,y) \mapsto f(x,y) \coloneqq k\cdot x + y + 1.
\]
We first notice that if we fix \(x_0 \in \RR\) then the function \(f_{x_0}(y) = f(x_0,y)\) is strictly monotonically increasing on \(\RR\).
Similarly, we have for fixed \(y_0 \in \RR\) that the function \(f_{y_0}(x) = f(x,y_0)\) is strictly monotonically increasing on \(\RR\).

\begin{lemma}\label{lem:ineq-restr-fct}
    For each \(\ell \in [0,n-1]\) we have that
    \[
        \ell\cdot (km+\varepsilon) < f(x,y) < (\ell+1) \cdot (km+\varepsilon)
    \]
    where \((x,y) \in [\ell m,(\ell+1)m-1]\times [0,k-1]\).
\end{lemma}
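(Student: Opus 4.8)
The plan is to reduce the inequality on the whole box to an inequality at two corner points, using the monotonicity of $f$ recorded just before the statement. Since $f_{x_0}(y) = f(x_0,y)$ is strictly increasing in $y$ and $f_{y_0}(x) = f(x,y_0)$ is strictly increasing in $x$, the restriction of $f$ to the closed rectangle $[\ell m, (\ell+1)m - 1] \times [0, k-1]$ attains its minimum at the lower-left corner $(\ell m, 0)$ and its maximum at the upper-right corner $((\ell+1)m - 1, k-1)$. So it is enough to check the left inequality at $(\ell m, 0)$ and the right inequality at $((\ell+1)m - 1, k-1)$.

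For the lower bound, I would compute $f(\ell m, 0) = k\ell m + 1$ and observe that $\ell \cdot (km + \varepsilon) = k\ell m + \ell/n$; the desired strict inequality $k\ell m + \ell/n < k\ell m + 1$ is equivalent to $\ell/n < 1$, which holds exactly because $\ell \le n-1 < n$. This is the only point in the argument where the hypothesis $\ell \in [0, n-1]$ enters. For the upper bound, I would compute
\[
f\mleft((\ell+1)m - 1,\, k-1\mright) = k\mleft((\ell+1)m - 1\mright) + (k-1) + 1 = k(\ell+1)m,
\]
and note $(\ell+1)(km + \varepsilon) = k(\ell+1)m + (\ell+1)/n$, so the inequality $k(\ell+1)m < k(\ell+1)m + (\ell+1)/n$ reduces to $(\ell+1)/n > 0$, which is immediate.

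There is essentially no serious obstacle here; the only thing to be careful about is that both inequalities are required to be \emph{strict} on the entire box, which I would justify by the two corner computations above (strictness at the lower corner from $\ell < n$, strictness at the upper corner from $\ell + 1 \geq 1 > 0$) together with the strict monotonicity of $f$ in each coordinate, so that any non-corner point of the box gives a value strictly between these two corner values.
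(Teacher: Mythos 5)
Your proof is correct and follows essentially the same route as the paper: reduce to the two corner inequalities via the coordinatewise strict monotonicity of $f$, then compute directly, with the hypothesis $\ell \le n-1$ entering only through $\ell\varepsilon = \ell/n < 1$ at the lower corner. The paper states the upper-corner check more tersely but the argument is identical.
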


\begin{proof}
Let us consider the restriction of \(f(x,y)\) to the region \([\ell m,(\ell+1)m-1] \times [0,k-1]\) which we denote by \(g(x,y)\).

Notice that the domain of \(g(x,y)\) is a rectangle with left-bottom vertex \((\ell m,0)\) and top-right vertex \(((\ell+1)m-1,k-1)\).
With the above observed monotonicity of \(f(x,y)\), it suffices to show that 
    \[
        \ell \cdot (km + \varepsilon) < f(\ell \cdot m, 0) \qquad \text{and} \qquad f((\ell+1)\cdot m-1,k-1) < (\ell+1)\cdot(km+\varepsilon) \,.
    \]
To prove the first inequality, note that the assumption \(\ell < n\) implies that \(\ell \cdot \varepsilon < 1\), and thus
    \[
		\ell\cdot(km+\varepsilon) = k \cdot \ell \cdot m + \underbrace{\ell \cdot \varepsilon}_{<1} < k \cdot \ell \cdot m +1 = f(\ell \cdot m,0) \,.
    \]
The second inequality is straightforward (and does not need the assumption \(\ell<n\)).
\end{proof}

\begin{proof}[Proof of Proposition~\ref{prop:floor-fct}]
Notice that we can partition the integers in the interval \([0,mn-1]\) into the following pairwise disjoint subsets:
    \[
        [0,mn-1]\cap\ZZ = \bigcup_{\ell=0}^{n-1} \mleft([ \ell m, (\ell+1)m-1] \cap \ZZ\mright) \,.
    \]
Let \(i \in [\ell m, (\ell+1)m-1]\) for some \(\ell = 0, 1, \dots, n-1\) and let \(\delta = 0, 1, \dots, k-1\).
By Lemma~\ref{lem:ineq-restr-fct}, we have that
    \[
        \ell \cdot (k\cdot m+\varepsilon) < k \cdot i+\delta+1 < (\ell+1) \cdot (k\cdot m+\varepsilon),
    \]
    and thus
    \[
        \ell < \frac{k \cdot i + \delta + 1}{k\cdot m+\varepsilon} < \ell+1 \,.
    \]
It follows that
    \[
        \mleft\lfloor \frac{k \cdot i + \delta + 1}{k\cdot m+\varepsilon} \mright\rfloor = \ell \qquad \text{and} \qquad \mleft\lceil \frac{k \cdot i + \delta + 1}{k\cdot m+\varepsilon} \mright\rceil = \ell+1
    \]
for each \(\delta = 0, 1, \dots, k-1\).
\end{proof}

The following statement is straightforward to show:
\begin{lemma}\label{lem:second-ineq-restr-fct}
    For \(\ell \in [0, n-1]\), we have that
    \[
        \ell\cdot (m+\varepsilon) < x + 1 < (\ell+1) \cdot (m+\varepsilon)
    \]
    for all \(x \in [\ell m, (\ell+1)m -1]\).
\end{lemma}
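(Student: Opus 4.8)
The plan is to mimic the proof of Lemma~\ref{lem:ineq-restr-fct}, of which the present statement is just the degenerate case ``with the factor \(k\) and the auxiliary variable \(y\) removed''. I would set \(h(x) \coloneqq x+1\) and observe that \(h\) is strictly monotonically increasing on \(\RR\); hence on the interval \([\ell m,(\ell+1)m-1]\) its values lie strictly between \(h(\ell m) = \ell m + 1\) and \(h((\ell+1)m-1) = (\ell+1)m\). Consequently it suffices to verify the two endpoint inequalities
\[
    \ell \cdot (m+\varepsilon) < \ell m + 1 \qquad \text{and} \qquad (\ell+1)m < (\ell+1)\cdot(m+\varepsilon).
\]

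For the first inequality I would expand \(\ell \cdot (m+\varepsilon) = \ell m + \ell\varepsilon\); since \(\varepsilon = \frac1n\) and \(\ell \le n-1 < n\), we get \(\ell\varepsilon = \frac\ell n < 1\), so \(\ell \cdot (m+\varepsilon) = \ell m + \ell\varepsilon < \ell m + 1\). For the second inequality I would expand \((\ell+1)\cdot(m+\varepsilon) = (\ell+1)m + (\ell+1)\varepsilon\); since \(\ell+1 \ge 1\) and \(\varepsilon > 0\), the term \((\ell+1)\varepsilon\) is strictly positive, giving \((\ell+1)m < (\ell+1)\cdot(m+\varepsilon)\). Combining these two endpoint estimates with the monotonicity of \(h\) yields \(\ell\cdot(m+\varepsilon) < x+1 < (\ell+1)\cdot(m+\varepsilon)\) for every \(x \in [\ell m,(\ell+1)m-1]\), as claimed.

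There is essentially no obstacle: the single place the hypothesis \(\ell \in [0,n-1]\) is used is in the bound \(\ell\varepsilon < 1\) for the lower endpoint (exactly as in Lemma~\ref{lem:ineq-restr-fct}), while the upper endpoint estimate needs only \(\varepsilon > 0\). I would therefore present the argument in essentially this two-line form, without elaborating further, matching the ``this is straightforward to show'' phrasing already used in the statement.
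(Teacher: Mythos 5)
Your proof is correct and takes exactly the approach one would expect; the paper does not spell out a proof (it labels the lemma ``straightforward to show''), and your argument is the natural degenerate form of the proof of Lemma~\ref{lem:ineq-restr-fct}, with the monotonicity-plus-endpoint structure and the use of \(\ell < n\) to get \(\ell\varepsilon < 1\) matching that earlier proof precisely.
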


\begin{corollary}\label{cor:relation-betw-k-and-one}
Let \(k, m, n\) be positive integers and set \(\varepsilon\coloneqq \frac1n\).
Then we have
    \[
        \mleft\lfloor \frac{k\cdot q + 1}{k \cdot m + \varepsilon} \mright\rfloor = \mleft\lfloor \frac{q+1}{m+\varepsilon} \mright\rfloor \qquad \text{and} \qquad  \mleft\lceil \frac{k\cdot q + 1}{k \cdot m + \varepsilon} \mright\rceil = \mleft\lceil \frac{q+1}{m+\varepsilon} \mright\rceil
    \]
for all \(q = 0, 1, \dots, mn-1\).
\end{corollary}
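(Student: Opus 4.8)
The plan is to read the corollary directly off Lemma~\ref{lem:ineq-restr-fct} and Lemma~\ref{lem:second-ineq-restr-fct} by observing that both quotients are trapped strictly between the \emph{same} pair of consecutive integers. Concretely, partition the integers of \([0,mn-1]\) exactly as in the proof of Proposition~\ref{prop:floor-fct},
\[
[0,mn-1]\cap\ZZ = \bigcup_{\ell=0}^{n-1}\mleft([\ell m,(\ell+1)m-1]\cap\ZZ\mright),
\]
and let \(\ell \in \{0,1,\dots,n-1\}\) be the unique index with \(q \in [\ell m,(\ell+1)m-1]\).

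For the first quotient I would apply Lemma~\ref{lem:ineq-restr-fct} with \(i = q\) and \(\delta = 0\), so that \((x,y) = (q,0)\) lies in the rectangle \([\ell m,(\ell+1)m-1]\times[0,k-1]\) (here \(0 \in [0,k-1]\) since \(k \ge 1\)) and \(f(q,0) = kq+1\); this yields \(\ell\cdot(km+\varepsilon) < kq+1 < (\ell+1)\cdot(km+\varepsilon)\), hence \(\ell < \frac{kq+1}{km+\varepsilon} < \ell+1\), so that \(\floor{\frac{kq+1}{km+\varepsilon}} = \ell\) and \(\ceil{\frac{kq+1}{km+\varepsilon}} = \ell+1\). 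For the second quotient I would apply Lemma~\ref{lem:second-ineq-restr-fct} with \(x = q\), which gives \(\ell\cdot(m+\varepsilon) < q+1 < (\ell+1)\cdot(m+\varepsilon)\), hence \(\ell < \frac{q+1}{m+\varepsilon} < \ell+1\), so that \(\floor{\frac{q+1}{m+\varepsilon}} = \ell\) and \(\ceil{\frac{q+1}{m+\varepsilon}} = \ell+1\). Since both floor expressions equal \(\ell\) and both ceiling expressions equal \(\ell+1\), the corollary follows.

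I expect essentially no obstacle here: all of the genuine content already lives in the two lemmas, each of which isolates the relevant quotient strictly between \(\ell\) and \(\ell+1\). The only point requiring (minimal) care is that the \emph{same} \(\ell\) works for both quotients, which is immediate once one uses the common interval decomposition of \([0,mn-1]\); and the edge case \(q = 0\), which forces \(\ell = 0\), needs no separate argument since \(km+\varepsilon > 1\) and \(m+\varepsilon > 1\).
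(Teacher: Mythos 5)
Your proof is correct and is essentially identical to the paper's: same partition of \([0,mn-1]\) into the blocks \([\ell m,(\ell+1)m-1]\), same appeal to Lemma~\ref{lem:ineq-restr-fct} (with \(y=\delta=0\)) and Lemma~\ref{lem:second-ineq-restr-fct}, and the same conclusion that both quotients lie strictly between \(\ell\) and \(\ell+1\).
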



\begin{proof}
We again partition the integers in the interval \([0,mn-1]\) into disjoint subsets:
    \[
        \mleft([0,mn-1] \cap \ZZ \mright) = \bigcup_{\ell=0}^{n-1} \mleft( [ \ell m, (\ell+1)m-1] \cap \ZZ \mright) \,.
    \]
Suppose \(q \in [\ell m, (\ell+1)m -1]\) for some \(\ell=0,1,\dots, n-1\).
By the Lemmas~\ref{lem:ineq-restr-fct} and~\ref{lem:second-ineq-restr-fct}, we have that
    \[
        \ell\cdot (km+\varepsilon) < k \cdot q + 1 < (\ell+1) \cdot (km + \varepsilon) \quad \text{and} \quad \ell \cdot (m+\varepsilon) < q + 1 < (\ell+1) \cdot (m+\varepsilon) \,.
    \]
Hence, we get that
    \[
        \ell < \frac{k \cdot q + 1}{km + \varepsilon} < \ell+1 \quad \text{and} \quad \ell < \frac{q+1}{m + \varepsilon} < \ell + 1 \,.
    \]
From this the statement follows.
\end{proof}

\begin{lemma}\label{lem:upper-bound-weighted-floor}
Let \(k,n,m\) be positive integers, and let \(i = 0, 1, \dots, mn-1 \) and \(\delta = 0, 1, \dots, k-1\).
If \(\delta\geq (n-1)m+1\) and \(k\geq n\), then
    \begin{equation}\label{eqn:extensionlemma}
    \mleft\lfloor \frac{ki+\delta+1}{km+\frac{1}{n}}\mright\rfloor \left(1+\frac{1}{k}\right) < \frac{ki+\delta+1}{km+\frac{1}{n}} \, .
    \end{equation}
\end{lemma}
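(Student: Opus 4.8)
The plan is to feed the ratio through the block decomposition already used for Proposition~\ref{prop:floor-fct}, evaluate the floor exactly, and then reduce everything to a one-line numerical estimate. Set $\varepsilon \coloneqq \frac1n$ and partition $[0,mn-1]\cap\ZZ$ into the blocks $[\ell m,(\ell+1)m-1]\cap\ZZ$ for $\ell=0,1,\dots,n-1$; suppose $i$ lies in the $\ell$-th block, so in particular $i\ge \ell m$. Applying Lemma~\ref{lem:ineq-restr-fct} with $x=i$ and $y=\delta$ (so that $f(i,\delta)=ki+\delta+1$) gives $\ell(km+\varepsilon)<ki+\delta+1<(\ell+1)(km+\varepsilon)$, hence
\[
\mleft\lfloor \frac{ki+\delta+1}{km+\varepsilon}\mright\rfloor=\ell .
\]
Thus it suffices to prove $\ell\mleft(1+\frac1k\mright)<\frac{ki+\delta+1}{km+\varepsilon}$.

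Next I would clear the (positive) denominator: the target inequality is equivalent to $\ell(km+\varepsilon)\frac{k+1}{k}<ki+\delta+1$. Expanding the left-hand side gives $\ell(km+\varepsilon)\frac{k+1}{k}=\ell km+\ell m+\ell\varepsilon+\frac{\ell\varepsilon}{k}$, while on the right I would use the crude bound $ki+\delta+1\ge k\ell m+\delta+1$ afforded by $i\ge\ell m$. After cancelling the common term $\ell km$, it is therefore enough to verify
\[
\ell m+\ell\varepsilon+\frac{\ell\varepsilon}{k}<\delta+1 .
\]

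The last step is the estimate. From $\ell\le n-1$ and $\varepsilon=\frac1n$ we get $\ell\varepsilon=\frac{\ell}{n}<1$ and $\frac{\ell\varepsilon}{k}\le\ell\varepsilon<1$ (using only $k\ge1$), so the left-hand side above is strictly less than $\ell m+2\le (n-1)m+2$, and the hypothesis $\delta\ge(n-1)m+1$ gives $(n-1)m+2\le\delta+1$, which closes the chain. I do not expect a genuine obstacle here; the only care needed is to keep every inequality strict — in particular to use only $i\ge\ell m$ (a sharper lower bound on $i$ would fail when $i=\ell m$) and to note $\frac{\ell\varepsilon}{k}\le\ell\varepsilon$ — and to observe that the degenerate case $\ell=0$ is automatically covered, since then the reduced inequality is just $0<\delta+1$. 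I would also remark in passing that the hypothesis $k\ge n$ is in fact not used in this argument: it is implied by the other assumptions, because $\delta\le k-1$ and $\delta\ge(n-1)m+1$ together force $k\ge(n-1)m+2$.
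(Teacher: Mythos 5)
Your argument is correct, and it follows the same overall contour as the paper's proof (divide $i$ by $m$, evaluate the floor, then verify the residual inequality), but the execution is noticeably cleaner at two points. First, where the paper in effect re-derives $\lfloor (ki+\delta+1)/(km+\varepsilon)\rfloor=\ell$ inline by observing that $t+\frac{kns+n\delta+n-t}{kmn+1}$ has fractional part in $[0,1)$, you simply invoke Lemma~\ref{lem:ineq-restr-fct} — already proved — with $(x,y)=(i,\delta)$, which is exactly what that lemma is for. Second, after clearing the denominator you reduce to the short inequality $\ell m+\ell\varepsilon+\tfrac{\ell\varepsilon}{k}<\delta+1$, which is settled by the crude bounds $\ell\varepsilon<1$ and $\tfrac{\ell\varepsilon}{k}<1$ together with $\ell m\le(n-1)m$ and $\delta\ge(n-1)m+1$; the paper instead passes through the less transparent estimate $0<nk(ks+\delta+1-tm)-t(k+1)$, which is where it actually uses $k\ge n$. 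Your closing remark that $k\ge n$ is redundant — forced by $\delta\le k-1$, $\delta\ge(n-1)m+1$, and $m\ge1$ — is correct and is a small but genuine improvement on the statement: the hypothesis can safely be dropped (or kept purely as a readability aid).
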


\begin{proof}
Let \(t = 0,1, \dots, n-1\) and \(s = 0, 1, \dots, m-1\).
Observe that since \(\delta\geq (n-1)m+1\), we have 
	\[
		0<nk(ks + \underbrace{\delta - tm}_{\ge1}) \, .
	\]
Since \(0\leq t\leq n-1\) and \(k\geq n\), it follows that
	\[
		0<nk(ks+\delta+1-tm)-t(k+1) = nk (ks + \delta - tm) + \underbrace{nk - t(k+1)}_{\ge1}\, .
	\]
Expanding and rearranging gives us
	\[
		tkmn+t<nk^2s+nk\delta+nk-tk
	\]
from which it follows that
	\[
		\frac{t}{k}<\frac{kns+n\delta+n-t}{kmn+1} \, .
	\]
Adding \(t\) to both sides gives
	\begin{equation}\label{eq:t-equation}
		t\left(1+\frac{1}{k}\right)<t+\frac{kns+n\delta+n-t}{kmn+1} \, .
	\end{equation}
Because \(0\leq s\leq m-1\), \(0 \leq \delta \le k-1\), and \(0\leq t \le n-1\), we have that
	\[
		0\le \frac{kns+n\delta+n-t}{kmn+1}< 1 \quad \Rightarrow \quad t = \left\lfloor t+\frac{kns+n\delta+n-t}{kmn+1}\right\rfloor \, .
	\]
By substituting this expression for \(t\) in the left-hand-side of Equation~\eqref{eq:t-equation}, we obtain
	\[
		\left\lfloor t+\frac{kns+n\delta+n-t}{kmn+1}\right\rfloor\left(1+\frac{1}{k}\right)<t+\frac{kns+n\delta+n-t}{kmn+1} \, .
	\]
Elementary algebraic calculations then yield
	\[
		\left\lfloor \frac{k(tm+s)+\delta+1}{km+\frac{1}{n}}\right\rfloor\left(1+\frac{1}{k}\right)<\frac{k(tm+s)+\delta+1}{km+\frac{1}{n}} \, .
	\]
Note that with our range of \(t\in [0,n-1]\) and \(s\in [0,m-1]\), the values of \(i=tm+s\) exactly parameterize \(i\in [0,mn-1]\), and thus our proof is complete.
\end{proof}

\begin{theorem}\label{thm:ext-floor-ceil-identity}
Let \(k,m,n,r\) be positive integers with \(1\leq r\leq mn\).
For \(k\geq mn\), \\ \(\delta = mn, mn+1, \dots, k-1\), and \(i = 0, 1, \dots, mn-1\), the following hold:
    \begin{equation}\label{eqn:floorextension}
        \left\lfloor \frac{ki+\delta+1}{km+\frac{r}{n}}\right\rfloor = \left\lfloor \frac{ki+\delta+1}{km+\frac{1}{n}}\right\rfloor
    \end{equation}
and
    \begin{equation}\label{eqn:ceilextension}
        \left\lceil \frac{ki+\delta+1}{km+\frac{r}{n}}\right\rceil = \left\lceil \frac{ki+\delta+1}{km+\frac{1}{n}}\right\rceil \, .
    \end{equation} 
\end{theorem}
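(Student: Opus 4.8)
The plan is to show that on each block of $i$-values both fractions are trapped strictly between the same pair of consecutive integers, which pins down their floors and their ceilings simultaneously. Partition the integers in $[0,mn-1]$ into the blocks $[\ell m,(\ell+1)m-1]\cap\ZZ$ for $\ell=0,1,\dots,n-1$, exactly as in the proof of Proposition~\ref{prop:floor-fct}, and fix $i$ in the $\ell$-th block. For the denominator $km+\tfrac1n$, Lemma~\ref{lem:ineq-restr-fct} (applied with $\varepsilon=\tfrac1n$, $x=i$, $y=\delta$, which lies in the range $[0,k-1]$ required by the lemma) gives
\[
\ell\mleft(km+\tfrac1n\mright)<ki+\delta+1<(\ell+1)\mleft(km+\tfrac1n\mright),
\]
so $\floor{\frac{ki+\delta+1}{km+1/n}}=\ell$ and $\ceil{\frac{ki+\delta+1}{km+1/n}}=\ell+1$. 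It therefore suffices to establish the analogous sandwich for the denominator $km+\tfrac rn$, namely
\[
\ell\mleft(km+\tfrac rn\mright)<ki+\delta+1<(\ell+1)\mleft(km+\tfrac rn\mright);
\]
the same argument then yields floor $\ell$ and ceiling $\ell+1$ for that fraction, and \eqref{eqn:floorextension} and \eqref{eqn:ceilextension} follow at once.

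To prove the displayed sandwich, observe that $(i,\delta)\mapsto ki+\delta+1$ is strictly increasing in each variable, so it is enough to verify the lower bound at the corner $(i,\delta)=(\ell m,mn)$ and the upper bound at the corner $(i,\delta)=((\ell+1)m-1,k-1)$. At the first corner the numerator equals $k\ell m+mn+1$, so the lower bound reduces to $\tfrac{\ell r}{n}<mn+1$, which holds because $\ell\leq n-1$ and $r\leq mn$ force $\tfrac{\ell r}{n}\leq\tfrac{(n-1)mn}{n}=(n-1)m<mn+1$. This is the one place where the hypothesis $\delta\geq mn$ enters, and it is essential: for smaller $\delta$ this bound can degenerate to an equality and the stated identities can fail (already for $m=1$, $n=2$, $k=3$, $r=2$, $\delta=0$, $i=1$). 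At the second corner the numerator equals $k(\ell+1)m$, so the upper bound reduces to $(\ell+1)\tfrac rn>0$, which is immediate.

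There is no substantive obstacle here; the work is organizational. The points requiring care are: choosing the block index $\ell=\floor{i/m}$ so that over the admissible rectangle the numerator is minimized at $(\ell m,mn)$ and maximized at $((\ell+1)m-1,k-1)$; observing that the strictness of both bounds forces each fraction to be a non-integer lying strictly between $\ell$ and $\ell+1$, which is exactly what lets us read off both the floor and the ceiling; and noting that Lemma~\ref{lem:upper-bound-weighted-floor} is not needed for this particular statement. Everything else reduces to elementary comparisons of linear expressions.
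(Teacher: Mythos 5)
Your proof is correct, and it takes a genuinely different and leaner route than the paper's. The paper's proof reduces \eqref{eqn:floorextension} and \eqref{eqn:ceilextension} to the chain of inequalities
\begin{equation*}
\left\lfloor \frac{ki+\delta+1}{km+\frac{1}{n}}\right\rfloor
\leq
\frac{ki+\delta+1}{km+\frac{r}{n}}
\leq
\frac{ki+\delta+1}{km+\frac{1}{n}}
\leq
\left\lceil \frac{ki+\delta+1}{km+\frac{r}{n}}\right\rceil \, ,
\end{equation*}
establishing the left-most inequality via the technical Lemma~\ref{lem:upper-bound-weighted-floor} (a multiplicative slack of $1+1/k$) and the right-most one by a case split on whether $i\equiv 0\pmod m$, writing $i=tm+s$. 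You instead observe that Lemma~\ref{lem:ineq-restr-fct} already sandwiches the numerator strictly between $\ell\left(km+\tfrac1n\right)$ and $(\ell+1)\left(km+\tfrac1n\right)$ on the $\ell$-th block, which simultaneously pins the floor to $\ell$ and the ceiling to $\ell+1$, and that the analogous strict sandwich for denominator $km+\tfrac rn$ follows from checking the two extreme corners of the $(i,\delta)$ rectangle: at $(\ell m,mn)$ the lower bound reduces to $\ell r/n<mn+1$, and at $((\ell+1)m-1,k-1)$ the upper bound reduces to $(\ell+1)r/n>0$, both of which you verify correctly from $\ell\leq n-1$, $1\leq r\leq mn$, and $\delta\geq mn$. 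This eliminates the case split and makes Lemma~\ref{lem:upper-bound-weighted-floor} superfluous for this theorem, showing the $1+1/k$ slack is not actually needed here. The $\delta=0$ counterexample you flag is also a valid illustration that some lower bound on $\delta$ is necessary (though, as your own inequality $\ell r/n<\delta+1$ shows, the sharp threshold is $(n-1)m$ rather than $mn$; the paper's hypothesis is sufficient but not tight, and your proof works under the weaker $\delta\geq (n-1)m$).
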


\begin{proof}
Note that \(1\leq r\) implies that
	\[
		\left\lfloor \frac{ki+\delta+1}{km+\frac{r}{n}}\right\rfloor \leq \left\lfloor \frac{ki+\delta+1}{km+\frac{1}{n}}\right\rfloor
	\]
	and
	\[
		\left\lceil \frac{ki+\delta+1}{km+\frac{r}{n}}\right\rceil \leq \left\lceil \frac{ki+\delta+1}{km+\frac{1}{n}}\right\rceil \, .
	\]
Thus, our result follows once we prove that the following holds for all \(\delta\geq mn\):
	\begin{equation}\label{eqn:extensionineqs}
		\left\lfloor \frac{ki+\delta+1}{km+\frac{1}{n}}\right\rfloor 
		\leq
		\frac{ki+\delta+1}{\left( km+\frac{r}{n}\right) }
		\leq
		\frac{ki+\delta+1}{\left( km+\frac{1}{n}\right) } 
		\leq
		\left\lceil \frac{ki+\delta+1}{km+\frac{r}{n}}\right\rceil \, .
	\end{equation}
Note that the middle inequality above is a consequence of \(1\leq r\).
It is a strict inequality if \(1<r\).
To show the left-most inequality in~\eqref{eqn:extensionineqs}, use \(\delta\geq mn\) and apply Lemma~\ref{lem:upper-bound-weighted-floor} to obtain
	\begin{align*}
		\left\lfloor \frac{ki+\delta+1}{km+\frac{1}{n}}\right\rfloor \left( \frac{km+\frac{r}{n}}{km+\frac{1}{n}}\right) = \left\lfloor \frac{ki+\delta+1}{km+\frac{1}{n}}\right\rfloor \left( 1 + \frac{r-1}{knm+1}\right)
		& <
		\left\lfloor \frac{ki+\delta+1}{km+\frac{1}{n}}\right\rfloor \left(1+\frac{1}{k}\right) \\
		& < \frac{ki+\delta+1}{km+\frac{1}{n}}  \, .
	\end{align*}
From this, we conclude that if \(\delta\geq mn\), the following inequality holds:
	\[
		\left\lfloor \frac{ki+\delta+1}{km+\frac{1}{n}}\right\rfloor 
		<
		\frac{ki+\delta+1}{\left( km+\frac{r}{n}\right) } \, .
	\]
To show the right-most inequality in~\eqref{eqn:extensionineqs}, we assume \(\delta\geq mn\) and consider two cases. 
We write \(i=tm+s\) where \(t = 0, 1, \dots, n-1\) and \(s = 0, 1, \dots, m-1\).

Our first case is when \(s=0\).
Since \(\delta\leq k-1\) and \(m\geq 1\), we have
	\[
		\delta\leq km+\frac{t+1}n-1 = \left(km+\frac{1}{n}\right)(t+1)-ktm-1 \, .
	\]
Combining the above inequality with \(mn \leq \delta \leq k-1\) and \(0\leq t\leq n-1\), we obtain
	\begin{multline*}
		\frac{ki+\delta+1}{km+\frac{1}{n}} =
		\frac{ktm+\delta+1}{km+\frac1n} =
		\frac{ktm+\frac tn + \delta + \frac{n-t}n}{km+\frac1n} =
		t + \frac{n\delta + n - t}{kmn + 1} < t+1 = \\
		= \left\lceil t + \smash{\underbrace{\frac{\delta+1-tm}{(k+1)m}}_{\in(0,1)}}\vphantom{\frac{\delta+1-tm}{(k+1)m}}\right\rceil\vphantom{\underbrace{\frac{\delta+1-tm}{(k+1)m}}_{\in(0,1)}}
		= \left\lceil \frac{ktm+\delta+1}{(k+1)m}\right\rceil
		\leq
		\left\lceil \frac{ktm+\delta+1}{km+\frac{r}{n}}\right\rceil \,.
	\end{multline*}

Our second case is when \(1 \leq s \leq m-1\).
In this case, \(\delta\leq k-1\) implies that
	\[
		\delta\leq k(m-s)-1\leq \frac{t}{n}+\frac{1}{n}+k(m-s)-1 = \left(km+\frac{1}{n}\right)(t+1)-kmt-ks-1
	\]
which implies since \(\delta\geq mn\) and \(i\leq mn-1\) that
	\begin{multline*}
		\frac{ki+\delta+1}{km+\frac{1}{n}} = \frac{k(tm+s)+\delta+1}{km+\frac{1}{n}} < t+1
		= \left\lceil \frac{i}{m}\right\rceil  \\
		< \left\lceil \frac{(k+1)i-i+\delta+1}{km+m}\right\rceil
		\leq \left\lceil \frac{ki+\delta+1}{km+\frac{r}{n}}\right\rceil \, .
	\end{multline*}

This shows that~\eqref{eqn:extensionineqs} holds for all \(mn\leq \delta \leq k-1\).
\end{proof}

\section{Asymptotic Properties of Local \texorpdfstring{\(h^*\)}--Polynomials}\label{sec:limits}

In this section we consider one-row Hermite normal form simplices as in~\eqref{eq:onerow} with fixed \(a_1,\ldots,a_{d-1}\) and increasing normalized volume \(N\).
Specifically, we establish in Theorem~\ref{thm:asymptotic} that the general behavior of the local \(h^*\)-polynomial for these simplices is determined by the simplex with \(N=M+1\), where \(M=\lcm(a_1,\ldots,a_{d-1},-1+\sum_ia_i)\).
The following special case establishes that there is a close connection between the local \(h^*\)-polynomials for \(N=M+1\) and \(N=kM+1\).

\begin{theorem}\label{thm:asymptotic}
	Fix \(a_1, \dots, a_{d-1} \in \ZZ\) and let \(M \coloneqq \mathrm{lcm}(a_1, \dots, a_{d-1}, \sum_{i=1}^{d-1}a_i-1)\).
	Denoting by \(S_{kM+1}\) the simplex given in~\eqref{eq:onerow} with normalized volume \(N=k \cdot M + 1\) for some \(k \in \mathbb{Z}_{>0}\), we have that 
 \[
 B(S_{kM+1};z) = k \cdot B(S_{M+1};z) \, .
 \]
	Thus, \(B(S_{M+1};z)\) is unimodal if and only if \(B(S_{kM+1};z)\) is unimodal for all positive integers \(k\).
\end{theorem}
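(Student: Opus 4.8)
The plan is to reduce the identity $B(S_{kM+1};z) = k\,B(S_{M+1};z)$ to a fiberwise comparison of the ages of the non-zero elements of the parallelepiped group, and then to carry out that comparison using the floor and ceiling identities of Section~\ref{sec:floor}.

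Set $\sigma \coloneqq \bigl(\sum_{i=1}^{d-1}a_i\bigr)-1$, so that $M=\lcm(a_1,\dots,a_{d-1},\sigma)$. The first step is to record an explicit formula for the age of the $\ell$-th generator of $\Gamma(S_N)$. The coordinates $(x_0,\dots,x_d)$ in the given parameterization of $\Gamma$ sum to $0$, hence $\age = \sum_i\{x_i\} = -\sum_i\floor{x_i}$; using $\floor{-t}=-\ceil{t}$ and $\floor{\ell/N}=0$ for $1\le\ell\le N-1$, this gives
\[
\age^{(N)}_\ell \;=\; -\floor{\tfrac{\ell\sigma}{N}} + \sum_{j=1}^{d-1}\ceil{\tfrac{\ell a_j}{N}}.
\]
When $\gcd(N,M)=1$ — which holds for both $N=M+1$ and $N=kM+1$, since these are $\equiv 1\pmod M$ — each $a_j$ and $\sigma$ is coprime to $N$, so each of the $N-1$ elements indexed by $\ell=1,\dots,N-1$ lies in $\Gamma\cap(0,1)^{d+1}$; thus $B(S_N;z)=\sum_{\ell=1}^{N-1}z^{\age^{(N)}_\ell}$ for these two values of $N$.

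Next I would fix $\ell\in\{1,\dots,kM\}$ and write $\ell-1 = ki+\delta$ with $i=\floor{(\ell-1)/k}\in\{0,\dots,M-1\}$ and $\delta\in\{0,\dots,k-1\}$, so that $i+1=\ceil{\ell/k}$, and then treat each summand separately. For the term $\ceil{\ell a_j/(kM+1)}$, since $a_j\mid M$ I apply Proposition~\ref{prop:floor-fct} and Corollary~\ref{cor:relation-betw-k-and-one} with $n=a_j$ and $m=M/a_j$, so that $km+\tfrac1n = \tfrac{kM+1}{a_j}$ and $m+\tfrac1n = \tfrac{M+1}{a_j}$; after clearing denominators, Proposition~\ref{prop:floor-fct} yields $\ceil{\frac{\ell a_j}{kM+1}}=\ceil{\frac{(ki+1)a_j}{kM+1}}$ and Corollary~\ref{cor:relation-betw-k-and-one} (with $q=i$) yields $\ceil{\frac{(ki+1)a_j}{kM+1}}=\ceil{\frac{(i+1)a_j}{M+1}}$. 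The term $\floor{\ell\sigma/(kM+1)}$ is handled in exactly the same way using $n=\sigma$, $m=M/\sigma$ and the floor halves of the two statements. Summing over $j$ then gives $\age^{(kM+1)}_\ell = \age^{(M+1)}_{\,\ceil{\ell/k}}$ for every $\ell\in\{1,\dots,kM\}$, where the right-hand side is a legitimate summand of $B(S_{M+1};z)$ since $\ceil{\ell/k}\in\{1,\dots,M\}$ and $\gcd(M+1,M)=1$.

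Finally, $\ell\mapsto\ceil{\ell/k}$ maps $\{1,\dots,kM\}$ onto $\{1,\dots,M\}$ with every fiber of size exactly $k$, so
\[
B(S_{kM+1};z) = \sum_{\ell=1}^{kM} z^{\age^{(kM+1)}_\ell} = \sum_{p=1}^{M} k\, z^{\age^{(M+1)}_p} = k\cdot B(S_{M+1};z),
\]
and the unimodality equivalence is then immediate, since a polynomial with non-negative coefficients is unimodal if and only if any positive scalar multiple of it is. I expect the main obstacle to be the bookkeeping in the middle step: one must choose, separately for each $a_j$ and for $\sigma$, the correct assignment of the variables in Proposition~\ref{prop:floor-fct} and Corollary~\ref{cor:relation-betw-k-and-one} so that the denominators collapse to $kM+1$ and $M+1$ — which is precisely where the divisibilities $a_j\mid M$ and $\sigma\mid M$ (built into the definition of $M$) are used.
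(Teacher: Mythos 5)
Your proof is correct and follows essentially the same route as the paper's: write $\ell=kq+\delta+1$, apply Proposition~\ref{prop:floor-fct} (which collapses the $\delta$-dependence) and Corollary~\ref{cor:relation-betw-k-and-one} (which trades $k$ for $1$) term by term in the age formula, and conclude by counting. The only cosmetic difference is that you parameterize $\Gamma$ by $\ell\,v_0$ with the opposite sign for the generator, so your age formula is $-\floor{\ell\sigma/N}+\sum_j\ceil{\ell a_j/N}$ rather than the paper's $1+\ceil{\ell\sigma/N}-\sum_j\floor{\ell a_j/N}$; these agree after $\ell\mapsto N-\ell$, and both reduce correctly to the lemmas of Section~\ref{sec:floor}. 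Your writeup is in fact a bit more explicit than the paper's in two places it leaves implicit: the observation that $\gcd(M,N)=1$ guarantees every nonzero group element lies in $(0,1)^{d+1}$ (so $B$ is a sum over all $\ell=1,\dots,N-1$), and the explicit $k$-to-$1$ fiber count under $\ell\mapsto\ceil{\ell/k}$.
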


We first establish Lemma~\ref{lem:asymptoticage}, which is needed for the proof of Theorem~\ref{thm:asymptotic}.
Let \((a_1, \dots, a_{d-1}) \in \mathbb{Z}^{d-1}\) be the vector of the non-trivial-row-entries and let \(N \in \mathbb{Z}_{>0}\) be the normalized volume of the simplex \(S \subseteq \mathbb{R}^d\).
Set \(M \coloneqq \mathrm{lcm}\mleft(a_1, \dots, a_{d-1}, \sum_{i=1}^{d-1}a_i -1\mright) \in \mathbb{Z}_{\ge0}\).
Suppose \(N = N^{(k)} = k \cdot M + 1\) for some \(k \in \mathbb{Z}_{>0}\).
Recall that the parallelepiped group is generated by the following element:
\[
    v_0^{(k)} \coloneqq \mleft( \frac{1-\sum_{i=1}^{d-1}a_i}N, \frac{a_1}N, \frac{a_2}N, \dots, \frac{a_{d-1}}N, -\frac1N \mright) \in \Gamma.
\]
We observe that each element \(x = \ell \cdot v_0^{(k)}\) for \(\ell = 1, \dots, N-1\) has age
\begin{align*}
    \age(x) &= 1 + \mleft\lceil \frac{\ell\cdot \mleft( \mleft(\sum_{i=1}^{d-1} a_i\mright) - 1 \mright)}N\mright\rceil - \sum_{i=1}^{d-1} \mleft\lfloor \frac{\ell \cdot a_i}N \mright\rfloor\\
    &= 1 + \mleft\lceil \frac\ell{N_0}\mright\rceil - \sum_{i=1}^{d-1} \mleft\lfloor \frac\ell{N_i} \mright\rfloor,
\end{align*}
where we have set
\[
    N_0 \coloneqq \frac N{\sum_{i=1}^{d-1}a_i-1} \qquad \text{and} \qquad N_i \coloneqq \frac N{a_i} \quad \text{for \(i=1,\dots,d-1\)}.
\]

We notice that \(N_i\) for \(i=0, \dots, d-1\) is of the form
\[
N_i = \frac{k\cdot M + 1}{D_i} = k \cdot \frac{M}{D_i}+\varepsilon_i = k \cdot M_i + \varepsilon_i
\]
where \(M_i\coloneqq M/D_i\) is an integer and \(\varepsilon_i = 1/D_i\).
Substituting this into the formula for the age, we get
\[
    \age(x) = 1 + \mleft\lceil \frac\ell{k \cdot M_0 + \varepsilon_0} \mright\rceil - \sum_{i=1}^{d-1}\mleft\lfloor \frac\ell{k \cdot M_i + \varepsilon_i}\mright\rfloor.
\]
We now divide \(\ell\) by \(k\), say \(\ell = k \cdot q + \delta + 1\) for some \(\delta=0, 1, \dots, k-1\) and \(q = 0, 1, \dots, M-1\).
By Proposition~\ref{prop:floor-fct}, it follows that
\begin{align*}
    \age\mleft(\ell \cdot v_0^{(k)}\mright) &= 1 + \mleft\lceil \frac{k \cdot q + \delta + 1}{k \cdot M_0 + \varepsilon_0} \mright\rceil - \sum_{i=1}^{d-1}\mleft\lfloor \frac{k \cdot q + \delta + 1}{k \cdot M_i + \varepsilon_i}\mright\rfloor\\
    &= 1 + \mleft\lceil \frac{k \cdot q + 1}{k \cdot M_0 + \varepsilon_0} \mright\rceil - \sum_{i=1}^{d-1}\mleft\lfloor \frac{k \cdot q + 1}{k \cdot M_i + \varepsilon_i}\mright\rfloor = \age\mleft( (k \cdot q + 1) \cdot v_0^{(k)} \mright).
\end{align*}
Here, we used that \(q = 0,1 , \dots, M_i\cdot D_i-1\) for each \(i =1, \dots, d-1\) (notice \(M_i \cdot D_i = M\)).
This proves the following statement.

\begin{lemma}\label{lem:asymptoticage}
    Given the assumptions of Theorem~\ref{thm:asymptotic}, for all \(q = 0, 1, \dots, M-1\) and all \(\delta = 0, 1, \dots, k-1\), we have
    \[
        \age\mleft((k\cdot q+1)v_0^{(k)} \mright) = \age\mleft((k\cdot q + \delta+1)v_0^{(k)}\mright)\,.
    \]
\end{lemma}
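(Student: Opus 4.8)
The plan is to reduce the identity to a termwise application of Proposition~\ref{prop:floor-fct}. First I would record the closed form for the age of a multiple of the generator $v_0^{(k)}$ of the parallelepiped group: unwinding the definition of $\age$ together with the parameterization of $\Gamma$, the element $x = \ell \cdot v_0^{(k)}$ for $\ell = 1, \dots, N-1$ satisfies
\[
\age(x) = 1 + \ceil{\frac{\ell\cdot\mleft(\sum_{i=1}^{d-1}a_i - 1\mright)}{N}} - \sum_{i=1}^{d-1}\floor{\frac{\ell\, a_i}{N}} = 1 + \ceil{\frac{\ell}{N_0}} - \sum_{i=1}^{d-1}\floor{\frac{\ell}{N_i}},
\]
where $N_0 \coloneqq N/\mleft(\sum_{i=1}^{d-1}a_i - 1\mright)$ and $N_i \coloneqq N/a_i$ for $i = 1, \dots, d-1$; this is exactly the computation already carried out above.

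Second, I would use the hypothesis $N = kM + 1$ to bring each denominator into the shape demanded by Proposition~\ref{prop:floor-fct}. With $D_0 = \sum_{i=1}^{d-1}a_i - 1$ and $D_i = a_i$, every $D_i$ divides $M$ by definition of the least common multiple, so $M_i \coloneqq M/D_i$ is an integer, and setting $\varepsilon_i \coloneqq 1/D_i$ gives
\[
N_i = \frac{kM + 1}{D_i} = kM_i + \frac{1}{D_i} = kM_i + \varepsilon_i .
\]
Hence $\age(\ell\cdot v_0^{(k)}) = 1 + \ceil{\ell/(kM_0 + \varepsilon_0)} - \sum_{i=1}^{d-1}\floor{\ell/(kM_i + \varepsilon_i)}$, a sum in which every floor and ceiling term has exactly the form treated in that proposition.

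Third, I would divide $\ell$ by $k$ with remainder, writing $\ell = kq + \delta + 1$ with $\delta \in \{0, \dots, k-1\}$ and $q \in \{0, \dots, M-1\}$; this covers the full range $\ell = 1, \dots, kM$. For each index $i \in \{0, 1, \dots, d-1\}$ I would invoke Proposition~\ref{prop:floor-fct} with its parameter $n$ taken to be $D_i$ and its parameter $m$ taken to be $M_i$, so that $mn = M_iD_i = M$, $\varepsilon = 1/n = \varepsilon_i$, and the admissible range $0 \le q \le mn - 1$ becomes precisely $0 \le q \le M - 1$. This yields
\[
\floor{\frac{kq + \delta + 1}{kM_i + \varepsilon_i}} = \floor{\frac{kq + 1}{kM_i + \varepsilon_i}} \quad\text{and}\quad \ceil{\frac{kq + \delta + 1}{kM_0 + \varepsilon_0}} = \ceil{\frac{kq + 1}{kM_0 + \varepsilon_0}},
\]
and summing these identities with the signs dictated by the age formula collapses $\age\mleft((kq + \delta + 1)v_0^{(k)}\mright)$ to $\age\mleft((kq + 1)v_0^{(k)}\mright)$, which is the lemma.

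The only point that genuinely requires care is the bookkeeping of index ranges: one must check that the single variable $q$ lies simultaneously in the admissible window $\{0, \dots, mn - 1\}$ for every term, which works precisely because $mn = M_iD_i = M$ is the same for all $i$, and that the shift $\delta$ never exceeds $k - 1$, matching the range of admissible shifts in Proposition~\ref{prop:floor-fct}. Beyond this there is no real obstacle: once the denominators are recognized as $kM_i + 1/D_i$, the statement is immediate from the floor and ceiling identities, and everything is computed element by element on $\Gamma$ so no interaction between distinct group elements is involved.
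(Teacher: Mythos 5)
Your proposal is correct and follows exactly the same route as the paper: write the age of $\ell \cdot v_0^{(k)}$ as $1 + \lceil \ell/N_0\rceil - \sum_{i=1}^{d-1}\lfloor \ell/N_i\rfloor$, use $N = kM+1$ to put each $N_i$ in the form $kM_i + \varepsilon_i$ with $M_i = M/D_i$ and $\varepsilon_i = 1/D_i$, divide $\ell$ by $k$ as $\ell = kq+\delta+1$, and apply Proposition~\ref{prop:floor-fct} termwise with $(m,n) = (M_i, D_i)$ so that $mn = M$ is the common admissible bound on $q$. The only observation worth flagging is that the paper states the range check only for $i = 1,\dots,d-1$ while you explicitly note it holds for $i=0$ as well (since $M_0D_0 = M$ too), which is a small but welcome clarification.
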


\begin{proof}[Proof of Theorem~\ref{thm:asymptotic}]
We want to compare the box polynomials of the two simplices where the \(a_i\) are fixed and only the normalized volume changes, namely the general case \(N_k\) with the `initial case' \(N_1\).
Like before let \(\ell = k \cdot q + \delta + 1\) for some \(\delta = 0,1, \dots, k-1\) and \(q=0,1,\dots,M-1\).
We continue to use the notation from above and notice that by Lemma~\ref{lem:asymptoticage}
\begin{align*}
    \age\mleft((k \cdot q + \delta + 1) v_0^{(k)}\mright) &= \age\mleft((k\cdot q+1)v_0^{(k)}\mright) = 1 + \mleft\lceil \frac{k\cdot q + 1}{k \cdot M_0+\varepsilon_0} \mright\rceil - \sum_{i=1}^{d-1} \mleft\lfloor \frac{k \cdot q+1}{k \cdot M_i + \varepsilon_i} \mright\rfloor\\
    &= 1 + \mleft\lceil \frac{q + 1}{M_0+\varepsilon_0} \mright\rceil - \sum_{i=1}^{d-1} \mleft\lfloor \frac{q+1}{M_i + \varepsilon_i} \mright\rfloor = \age\mleft((q+1) v_0^{(1)}\mright)
\end{align*}
Here, we used Lemma~\ref{cor:relation-betw-k-and-one} to justify the step from the first to the second line.
From this the theorem follows.
\end{proof}
 
Next, we extend Theorem~\ref{thm:asymptotic} to the setting of arbitrary normalized volume.

\begin{theorem}\label{thm:fullasymptotic}
	Fix \(a_1, \dots, a_{d-1} \in \ZZ_{\geq 1}\) and let \(M \coloneqq \mathrm{lcm}(a_1, \dots, a_{d-1}, -1+\sum_{i=1}^{d-1}a_i)\).
	We continue to use the notation from Section~\ref{sec:box-h-star-polynomials} and let \(S_N\) denote the simplex given there with normalized volume \(N\).
	Let \(k\) be a positive integer and \(0\leq r\leq M-1\).
	Then we have that 
    \[
		\lim_{k \to \infty}B(S_{kM+r};z)/B(S_{kM+r};1) = B(S_{M+1};z)/B(S_{M+1};1) \, .
    \]
	It follows that if \(B(S_{M+1};z)\) is strictly unimodal, i.e., if the coefficients are unimodal with strict increases and strict decreases, then \(B(S_{kM+r};z)\) is strictly unimodal for all sufficiently large \(k\).
\end{theorem}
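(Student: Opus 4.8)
The plan is to follow the strategy of the proof of Theorem~\ref{thm:asymptotic}, now allowing a bounded amount of ``slack'' coming from the residue $r$, and then to pass to the limit. Write $D_0 \coloneqq -1 + \sum_{i=1}^{d-1} a_i$ and $D_i \coloneqq a_i$ for $i = 1, \dots, d-1$, so that $M = \lcm(D_0, \dots, D_{d-1})$, and recall from the discussion preceding Lemma~\ref{lem:asymptoticage} that, writing $v_0 = v_0(N)$ for the standard generator of the parallelepiped group $\Gamma(S_N)$, an index $\ell \in \{1, \dots, N-1\}$ contributes the monomial $z^{\age(\ell v_0)}$ to $B(S_N;z)$ precisely when none of the coordinates of $\ell v_0$ is an integer, and that
\[
\age(\ell v_0) = 1 + \ceil{\frac{\ell}{N/D_0}} - \sum_{i=1}^{d-1} \floor{\frac{\ell}{N/D_i}} \, .
\]

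Fix $N = kM + r$ with $k \ge M+1$, put $M_i \coloneqq M/D_i \in \ZZ$ and $\varepsilon_i \coloneqq 1/D_i$, so that $N/D_i = k M_i + r\varepsilon_i$. Call $\ell$ \emph{good} if $\ell = kq + \delta + 1$ with $q \in \{0, \dots, M-1\}$ and $\delta \in \{M, \dots, k-1\}$. The good indices form $M$ intervals of length $k - M$, so there are exactly $M(k-M)$ of them and only $M^2 + r - 1$ non-good ones in $\{1, \dots, N-1\}$ --- a bound independent of $k$. For a good $\ell$, applying in succession Theorem~\ref{thm:ext-floor-ceil-identity} (with $m = M_i$, $n = D_i$, and its parameter ``$r$'' being our $r$; the hypotheses $k \ge mn = M$ and $\delta \ge mn = M$ are exactly what ``good'' guarantees), then Proposition~\ref{prop:floor-fct}, then Corollary~\ref{cor:relation-betw-k-and-one}, rewrites $\floor{(kq+\delta+1)/(kM_i + r\varepsilon_i)}$ as $\floor{(q+1)/(M_i + \varepsilon_i)}$, and similarly for the ceiling; hence $\age(\ell\, v_0(N)) = \age((q+1)\, v_0(M+1))$, which depends only on $q$. (The case $r = 0$ is handled identically after rewriting $N = (k-1)M + M$ and applying Theorem~\ref{thm:ext-floor-ceil-identity} with its parameter equal to $M = mn$.)

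Next I would control which indices contribute. An index $\ell \in \{1, \dots, N-1\}$ fails to contribute exactly when $N \mid \ell D_i$ for some $i \in \{0, \dots, d-1\}$, that is, when $\ell$ is a multiple of $N/\gcd(N, D_i)$; since $\gcd(N, D_i) \le D_i$, the number of such $\ell$ is at most $\sum_{i=0}^{d-1}(D_i - 1)$, again independent of $k$. Moreover, for $N = M+1$ one has $\gcd(M+1, D_i) = 1$ for every $i$ (as $D_i \mid M$), so \emph{every} $\ell \in \{1, \dots, M\}$ contributes, whence $B(S_{M+1};z) = \sum_{q=0}^{M-1} z^{\age((q+1)\,v_0(M+1))}$ and $B(S_{M+1};1) = M$. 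Putting this together with the age computation above: summing $z^{\age((q+1)\,v_0(M+1))}$ over the $k-M$ good indices in each of the $M$ buckets $q = 0, \dots, M-1$ gives exactly $(k-M)\,B(S_{M+1};z)$, and discarding the $O(1)$ non-contributing good indices and adding the $O(1)$ non-good indices alters this by a polynomial with bounded coefficients; hence
\[
B(S_N;z) = k\,B(S_{M+1};z) + E_k(z) \, ,
\]
where $E_k(z) \in \ZZ[z]$ has all coefficients bounded in absolute value by a constant $C = C(a_1, \dots, a_{d-1})$ independent of $k$ (it absorbs $-M\,B(S_{M+1};z)$ together with the two $O(1)$ corrections). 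Then $B(S_N;1) = kM + E_k(1) = kM + O(1)$, and dividing and letting $k \to \infty$ yields the claimed coefficientwise limit; since all polynomials involved have degree at most $d$, this is precisely convergence of the associated distributions.

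The strict-unimodality conclusion is then immediate: if $b_j < b_{j+1}$ are consecutive coefficients of $B(S_{M+1};z)$, the difference of the corresponding coefficients of $B(S_{kM+r};z)$ equals $k(b_{j+1} - b_j) + ([z^{j+1}]E_k - [z^j]E_k) \ge k - 2C$, which is positive once $k > 2C$, and symmetrically for strict decreases; there are at most $d - 1$ such comparisons, so a single threshold on $k$ works. I expect the main obstacle to lie in the bookkeeping of the two middle paragraphs: aligning the hypotheses of the three Section~\ref{sec:floor} identities with the ``good'' block, and verifying that \emph{both} the non-good and the non-contributing indices number $O(1)$ uniformly in $k$, so that $E_k$ genuinely has bounded coefficients. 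The age computation is a formal chain of results already established; the genuinely new ingredient relative to Theorem~\ref{thm:asymptotic} is the $\gcd$ count handling the open-box (contribution) condition, which was invisible there because for $r = 1$ every index automatically contributes.
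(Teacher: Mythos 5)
Your proof is correct and follows the same underlying strategy as the paper's: parameterize \(\ell = kq + \delta + 1\), call \(\ell\) ``good'' when \(\delta \geq M\), and use the chain Theorem~\ref{thm:ext-floor-ceil-identity} \(\to\) Proposition~\ref{prop:floor-fct} \(\to\) Corollary~\ref{cor:relation-betw-k-and-one} to identify \(\age(\ell\,v_0(k,r))\) with \(\age((q+1)\,v_0(M+1))\) on the good set, while bounding the non-good and the non-contributing indices by a constant independent of \(k\). Where you diverge is in the packaging: the paper factors through Theorem~\ref{thm:asymptotic} (reducing \(kM+r\) to \(kM+1\) and then invoking \(B(S_{kM+1};z)=k\,B(S_{M+1};z)\)) and argues via a fraction-of-indices-tends-to-one count, whereas you collapse the chain in one pass and extract an explicit decomposition \(B(S_N;z)=k\,B(S_{M+1};z)+E_k(z)\) with \(E_k\) of uniformly bounded coefficients. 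The latter buys two things: it makes the convergence of distributions immediate without any hand-waving about fractions, and it makes the ``strict unimodality for large \(k\)'' conclusion a one-line estimate \(k(b_{j+1}-b_j)+O(1)>0\), which the paper asserts without spelling out. You also patch a small gap: Theorem~\ref{thm:ext-floor-ceil-identity} requires its parameter \(r\geq 1\), and the paper applies it for the case \(r=0\) without comment; your rewriting \(N=(k-1)M+M\) with the theorem's parameter equal to \(mn=M\) handles that case correctly. One tiny point worth checking in a final write-up: for \(r\in\{0,1\}\) the largest nominal ``good'' index \(\ell=kM\) slightly exceeds \(N-1\), so the count \(M(k-M)\) and \(M^2+r-1\) should be read as upper bounds in that regime (your \(r=0\) reindexing already fixes this, and for \(r=1\) it is off by at most one and absorbed into \(E_k\)), but this does not affect any of the conclusions.
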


\begin{proof}
	We fix \(r = 0, 1, \dots, M-1\) and set \(N(k,r) = k M + r\) and \(a_0 = \sum_{i=1}^{d-1}a_i-1\).
  Consider the generator of the parallelepiped group from above
	\[
		v_0(k,r) = \mleft( -\frac{a_0}{N(k,r)}, \frac{a_1}{N(k,r)}, \frac{a_2}{N(k,r)}, \dots, \frac{a_{d-1}}{N(k,r)}, -\frac1{N(k,r)} \mright) \in \Gamma \, .
	\]
	For \(\ell=1, 2, \dots, N(k,r)-1\), we write \(\ell = k \cdot q + \delta + 1\) for \(q = 0, 1, \dots, M-1\) and \(\delta = 0, 1, \dots, k-1\), \(M_i \coloneqq M/a_i\) and \(\varepsilon_i \coloneqq r/a_i\) for \(i = 0, 1, \dots, d-1\).
 With this notation, the age of \(\ell\cdot v_0(k,r)\) is:
	\begin{align*}
		\age\mleft(\ell \cdot v_0(k,r)\mright) & = 1 + \mleft\lceil \frac\ell{k \cdot M_0 + \varepsilon_0} \mright\rceil - \sum_{i=1}^{d-1}\mleft\lfloor \frac\ell{k \cdot M_i + \varepsilon_i}\mright\rfloor \\
		& = 1 + \mleft\lceil \frac{k\cdot q + \delta + 1}{k \cdot M_0 + \varepsilon_0} \mright\rceil - \sum_{i=1}^{d-1}\mleft\lfloor \frac{k\cdot q + \delta + 1}{k \cdot M_i + \varepsilon_i}\mright\rfloor
	\end{align*}
	We observe that Theorem~\ref{thm:ext-floor-ceil-identity} shows that
	\[
		\mleft\lfloor \frac{k\cdot q + \delta + 1}{k \cdot M_i + \varepsilon_i} \mright\rfloor = \mleft\lfloor \frac{k \cdot q + \delta + 1}{k \cdot M_i + 1} \mright\rfloor \quad \text{and} \quad \mleft\lceil \frac{k \cdot q+ \delta + 1}{k \cdot M_0 + \varepsilon_0} \mright\rceil = \mleft\lceil \frac{k \cdot q + \delta + 1}{k \cdot M_0 + 1} \mright\rceil
	\]
	for all \(q = 0, 1, \dots, M-1\) and \(\delta = M, M+1, \dots, k-1\) (provided that \(k\) is large enough).

	Hence, it follows for all \(\ell = k \cdot q + \delta + 1\) with \(q = 0, 1, \dots, M-1\) and \(\delta = M, M+1, \dots, k-1\) that
	\[
		\age(\ell \cdot v_0(k,r)) = \age(\ell \cdot v_0(k,1)) \,.
	\]
	In particular, among the \(kM+r-1\) values of \(\ell\) parameterizing the parallelepiped group for \(S_{kM+r}\), the number of \(\ell\)-values with ages differing from \(S_{kM+1}\) is bounded above by \(M^2+r\), which is constant with respect to \(k\).
 In addition to this, in the case that \(\gcd(M,kM+r)>1\), it is possible that some of the \(\ell\)-values for \(S_{kM+r}\) might not yield points in the open box \((0,1)^{d+1}\).
 The maximum possible number of such points is \(\sum_{i=0}^{d-1}a_i\), which is again constant with respect to \(k\).
 
 Thus, as \(k\to \infty\), the number of \(\ell=1,2,\ldots,kM+r-1\) such that \(\age(\ell \cdot v_0(k,r)) = \age(\ell \cdot v_0(k,1))\) is at least \(kM+r-1-\left(M^2+r+\sum_ia_i\right)\).
 Hence, the fraction of such \(\ell\)-values is
 \[
\frac{kM+r-1-\left(M^2+r+\sum_ia_i\right)}{kM+r-1}
 \]
which goes to \(1\) as \(k\to\infty\).
Hence, we have that
	\[
		\lim_{k\to \infty} B(S_{kM+r};z)/B(S_{kM+r};1) = \lim_{k\to \infty}B(S_{kM+1};z)/B(S_{kM+1}; 1)=B(S_{M+1};z)/B(S_{M+1}; 1)\, .
	\]
\end{proof}

For certain values of \(kM+r\), Theorem~\ref{thm:fullasymptotic} has implications for \(h^*\)-polynomials as well.

\begin{corollary}\label{thm:fullasymptotichstar}
Using the notation from Theorem~\ref{thm:fullasymptotic}, if \(\gcd(M,r)=1\), then 
\[
\lim_{k\to \infty} h^*(S_{kM+r};z)/h^*(S_{kM+r};1) = B(S_{M+1};z)/B(S_{M+1}; 1) \, .
\]
Thus, if \(S_{M+1}\) has a strictly unimodal local \(h^*\)-polynomial with a positive linear coefficient, then \(h^*(S_{kM+r};z)\) is unimodal for all sufficiently large \(k\).
\end{corollary}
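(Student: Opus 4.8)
The plan is to deduce the $h^*$-statement from the \emph{local} $h^*$-statement already proved in Theorem~\ref{thm:fullasymptotic}, using the gcd hypothesis to pin down the exact relationship between $h^*(S_{kM+r};z)$ and $B(S_{kM+r};z)$.

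First I would observe that $\gcd(M,kM+r)=\gcd(M,r)=1$ for every $k$, so Theorem~\ref{thm:boxhstarconditions} applies to each $S_{kM+r}$ and gives $h^*(S_{kM+r};z)=1+B(S_{kM+r};z)$. Its proof moreover shows that under this gcd condition all $kM+r-1$ nonzero elements of the parallelepiped group lie in the open cube $(0,1)^{d+1}$, whence $B(S_{kM+r};1)=kM+r-1$ and $h^*(S_{kM+r};1)=kM+r$.

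The next step is a direct comparison of the two normalized polynomials. Writing $B=B(S_{kM+r};z)$ and using that the $z^0$-coefficient of $B$ vanishes,
\[
\frac{h^*(S_{kM+r};z)}{h^*(S_{kM+r};1)}-\frac{B(S_{kM+r};z)}{B(S_{kM+r};1)}=\frac{1+B}{kM+r}-\frac{B}{kM+r-1}=\frac{1}{kM+r}-\frac{B}{(kM+r)(kM+r-1)}.
\]
Since $B/(kM+r-1)=B(S_{kM+r};z)/B(S_{kM+r};1)$ is a probability distribution, each of its coefficients lies in $[0,1]$; hence every coefficient of the right-hand side is at most $1/(kM+r)$ in absolute value and tends to $0$ as $k\to\infty$. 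Combining this with Theorem~\ref{thm:fullasymptotic} yields $\lim_{k\to\infty}h^*(S_{kM+r};z)/h^*(S_{kM+r};1)=B(S_{M+1};z)/B(S_{M+1};1)$.

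Finally, for the unimodality consequence: by the last sentence of Theorem~\ref{thm:fullasymptotic}, strict unimodality of $B(S_{M+1};z)$ forces $B(S_{kM+r};z)$ to be strictly unimodal once $k$ is large; and since the limiting distribution has a positive coefficient of $z$, the integer $b_1$ of $B(S_{kM+r};z)$ is $\ge 1$ for all large $k$. Then $h^*(S_{kM+r};z)=1+b_1z+b_2z^2+\cdots$ satisfies $h^*_0=1\le b_1=h^*_1$, while the remaining coefficients $(h^*_1,h^*_2,\dots)=(b_1,b_2,\dots)$ form the strictly unimodal sequence coming from $B$, so (invoking also the symmetry $b_i=b_{d+1-i}$ from Proposition~\ref{prop:boxpalindromic} for the descending tail) $h^*(S_{kM+r};z)$ is unimodal. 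I expect no serious obstacle here beyond bookkeeping: the one point to watch is that the hypothesis of a positive linear coefficient is exactly what survives the passage to the limit and guarantees $h^*_0\le h^*_1$, so that the single non-strict comparison in the $h^*$-vector is controlled.
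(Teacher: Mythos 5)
Your proposal is correct and follows exactly the route the paper intends: the paper's proof is a one-line citation of Theorems~\ref{thm:boxhstarconditions} and~\ref{thm:fullasymptotic}, and you have simply supplied the details (the invariance of $\gcd(M,kM+r)$ in $k$, the resulting identity $h^*(S_{kM+r};z)=1+B(S_{kM+r};z)$ and its effect on the normalizing constants, and the integrality argument that a positive limiting linear coefficient forces $b_1\ge 1=h^*_0$ for large $k$).
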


\begin{proof}
    This is a consequence of Theorems~\ref{thm:boxhstarconditions} and~\ref{thm:fullasymptotic}.
\end{proof}

\section{Further Questions}\label{sec:conclusion}

For a one-row Hermite normal form simplex, Theorem~\ref{thm:fullasymptotic} shows that if the values of \(a_1,\ldots,a_{d-1}\) are fixed, the distribution of the local \(h^*\)-polynomial coefficients is basically determined by the distribution for a single normalized volume of \(M+1\).
This suggests several questions leading to directions for further study.

\begin{figure}
\centering
\includegraphics[width=0.7
\textwidth]{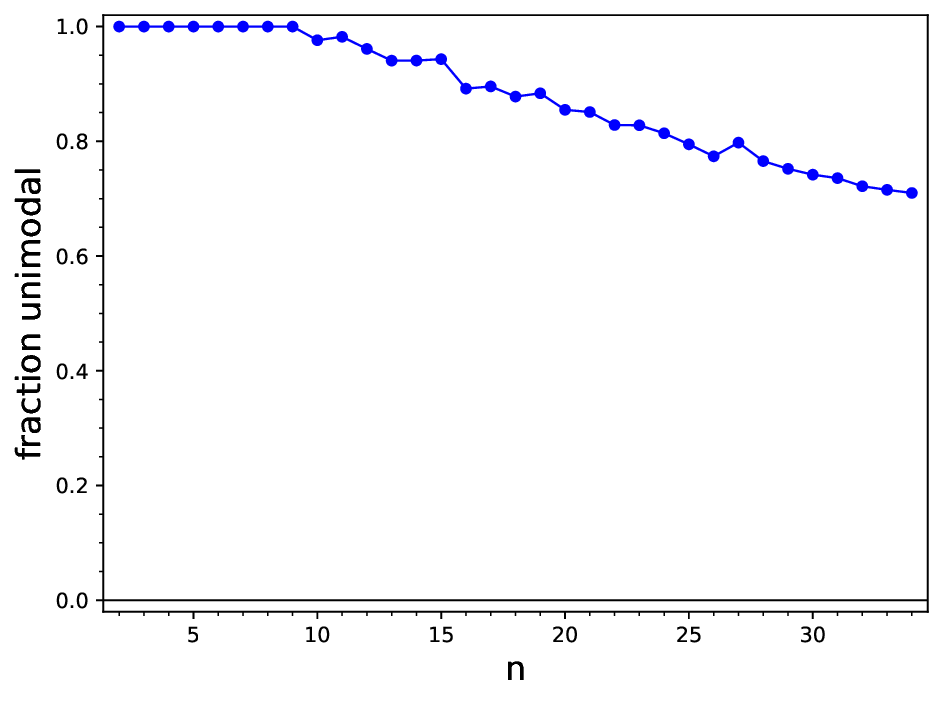}
\caption{For each \(n\), the fraction of unimodal local \(h^*\)-polynomials for one-row Hermite normal form simplices \(S_{M+1}\) with one row given by a partition of \(n\).}
\label{fig:fractionunimodalbypartition}
\end{figure}

\begin{question}\label{q:unimodal}
Which sequences \(a_1,\ldots,a_{d-1}\) yield a unimodal (or strictly unimodal) local \(h^*\)-polynomial for \(S_{M+1}\)?
How common is it for such a simplex to admit a regular unimodular triangulation?
\end{question}

There are several ways to approach Question~\ref{q:unimodal}.
One approach is to fix a positive integer \(n\) and consider the set of all one row Hermite normal form matrices with final row formed by a partition of \(n\) and normalized volume \(M+1\).
One can ask what fraction of these have unimodal local \(h^*\)-polynomials.
A plot of these fraction values is given in Figure~\ref{fig:fractionunimodalbypartition} for \(n\leq 34\).
It is not clear what the long-term behavior of this sequence is.

Another approach, which seems more promising, is motivated by the observation that if the values \(a_1,\ldots,a_{d-1}\) are distinct integers, it appears that this leads to local \(h^*\)-polynomial unimodality.
For example, every partition of \(n\leq 34\) with distinct parts yields a one row Hermite normal form simplex with a unimodal local \(h^*\)-polynomial, leading to the following question.

\begin{question}\label{q:distinct}
If \((a_1,a_2,\ldots,a_{d-1})\) is a list of \(d-1\) distinct positive integers, does the corresponding simplex \(S_{M+1}\) have a unimodal local \(h^*\)-polynomial?
\end{question}

\begin{figure}
\centering
\includegraphics[width=0.7
\textwidth]{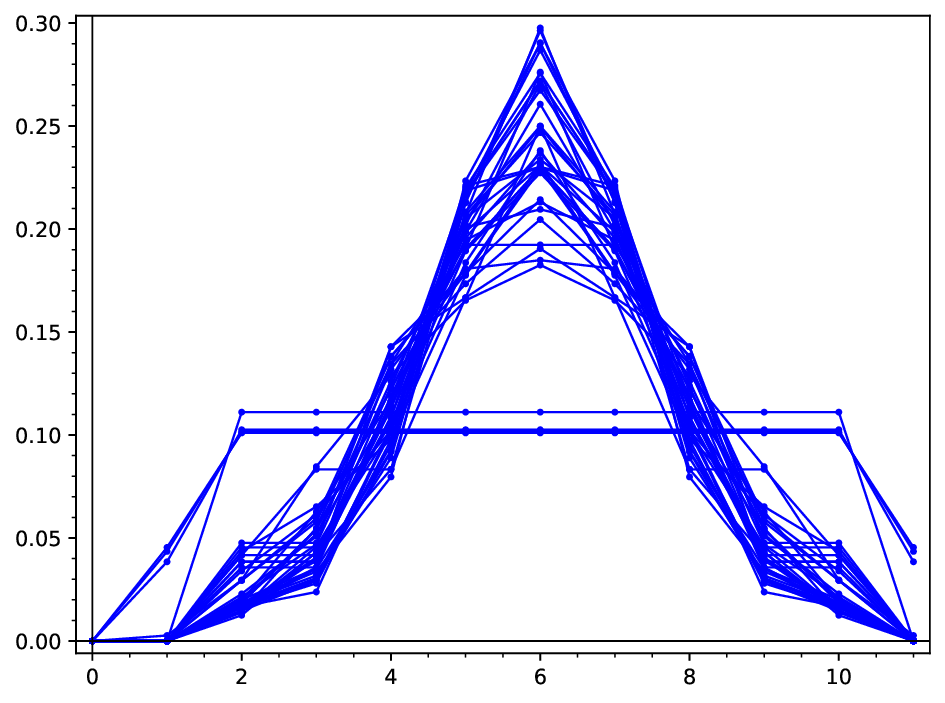}
\caption{The distributions for \(40\) local \(h^*\)-polynomials of \(11\)-dimensional simplices \(S_{M+1}\) generated by small random perturbations from constant rows.}
\label{fig:11randomdist}
\end{figure}

Experimentally, it seems that having a list of completely distinct positive integers is a stronger condition than needed for unimodality.
For example, we generated 121 examples in the following manner.
Begin with a constant vector \(a_i=k\) and fixed \(d\), then randomly add a value from \(\{0,1,2,3,4\}\) to each entry.
We considered the pairs \(d,k\) from 
\[
\{(8,1),(8,4),(8,7),(8,10), (11,1),(11,4),(11,7),(11,10),(14,1),(14,4),(14,7)\}
\]
and generated eleven samples each, consisting of the constant row case and ten random perturbations.
In all cases, we constructed the simplex \(S_{M+1}\), based on its role in Theorem~\ref{thm:fullasymptotic}.
Of this sample, \(95.87\%\) of the \(S_{M+1}\) had unimodal local \(h^*\)-polynomials.
A plot of the distributions of the unimodal local \(h^*\)-polynomials for the \(11\)-dimensional simplices in this sample is given in Figure~\ref{fig:11randomdist}.
The distributions that are constant or nearly constant arise from the constant row values of \(k=1,4,7,10\).
The other distributions arise from the random perturbations, and these all have a pronounced unimodal behavior.

\begin{table}
\begin{tabular}{|l|}
\hline\\
\(a\)-vector\\ local \(h^*\)-vector\\
\hline
\([1, 4, 2, 2, 2, 1, 2, 1, 2, 1]\) \\ \([0, 0, 4, 6, 8, 11, 10, 11, 8, 6, 4, 0]\)\\
\hline
\([2, 3, 3, 2, 3, 4, 3, 4, 3, 3]\) \\ \([0, 0, 12, 27, 54, 57, 48, 57, 54, 27, 12, 0]\)\\
\hline
\([7, 7, 6, 7, 7, 7, 4, 4, 7, 5]\) \\  \([0, 0, 11, 28, 59, 77, 70, 77, 59, 28, 11, 0]\)\\
\hline
\([11, 10, 13, 13, 10, 10, 12, 11, 10, 10]\) \\ \([0, 0, 12738, 45859, 139946, 185372, 167390, 185372, 139946, 45859, 12738, 0]\)\\
\hline
\([6, 5, 4, 7, 6, 5, 4, 4, 4, 4, 4, 6, 7]\) \\ \([0, 0, 84, 126, 213, 533, 888, 886, 886, 888, 533, 213, 126, 84, 0]\)\\ 
\hline
\end{tabular}
\caption{}
\label{table:nonunimodal}
\end{table}

The five non-unimodal examples from our sample are given in Table~\ref{table:nonunimodal}, and each of the corresponding \(a\)-vectors have a single value appearing in around half of the entries.
Note also that unimodality for these examples fails only in the central coefficients.
Thus, it is reasonable to conjecture that if the multiplicity of each distinct entry in the row is sufficiently small relative to the dimension, the local \(h^*\)-polynomial is unimodal.

These observations lead to the following more general question.

\begin{question}\label{q:typical}
For fixed \(d\) and \(N\), consider the set of all one-row Hermite normal form simplices with normalized volume \(N\) and dimension \(d\).
What is the ``typical'' behavior of the local \(h^*\)-polynomial distributions for simplices in this set?
What is the shape of the space of distributions associated to \(B(S;z)\) for all \(S\) in this set?
\end{question}

Regarding Question~\ref{q:typical}, computational experiments suggest that when the values of \\ \(a_1,\ldots,a_{d-1}\) are ``sufficiently'' distinct, the distribution is more similar to Figure~\ref{fig:q3_k12_geometricdist} than Figure~\ref{fig:allonesdist}, as was observed in Figure~\ref{fig:11randomdist}.
However, it is not clear at this time how to translate these observations into a precise conjecture.
Finally, it would be interesting to consider the more general family of Hermite normal form simplices.

\begin{question}\label{q:morerows}
Is there an analogue of Theorem~\ref{thm:fullasymptotic} for Hermite normal form simplices with more than one non-trivial row?
\end{question}

\section*{Data}

The experimental data reported in Section~\ref{sec:conclusion} is available at: \\
\url{https://doi.org/10.17605/OSF.IO/XH58C}.

\section*{Acknowledgments}
The authors thank the anonymous referee for their helpful comments.
The authors thank the American Institute of Mathematics, where this project was started as a result of the workshop ``Ehrhart polynomials: inequalities and extremal constructions.''
The authors also thank Ahmed Umer Ashraf, Matthias Beck, and Marie Meyer for helpful conversations at the start of this project. 
Andr\'es R.~Vindas-Mel\'endez was partially supported by the National Science Foundation under Award DMS-2102921.
Benjamin Braun is partially supported by the National Science Foundation under award DMS-1953785.

\bibliographystyle{plain}
\bibliography{boxpoly}

\end{document}